\newcommand{\Coh}{\operatorname{Coh}}
\DeclareMathOperator{\rmext}{ext}
\DeclareMathOperator{\Hom}{Hom}
\DeclareMathOperator{\coker}{coker}
\DeclareMathOperator{\im}{im}
\DeclareMathOperator{\rk}{{rk}}
\DeclareMathOperator{\Sing}{Sing}
\newcommand{\into}{\hookrightarrow}
\newcommand{\onto}{\twoheadrightarrow}
\newcommand{\p}[1]{{\mathbb{P}^{#1}}}
\newlength{\rrrr}
\newcommand{\intoo}[1]{\:
\xymatrix@1{\ar@{^(->}[r]^{#1}&}\:}
\newcommand{\ontoo}[1]{\:
\xymatrix@1{\ar@{->>}[r]^{#1}&}\:}
\def\lHom{\mathop{\mathcal Hom}\nolimits}
\def\GL{\mathop{\mathrm{GL}}\nolimits}
\def\Gr{\mathop{\mathrm{Gr}}\nolimits}
\def\lExt{\mathop{\mathcal Ext}\nolimits}
\def\C{\ensuremath{\mathbb{C}}}
\def\P{\ensuremath{\mathbb{P}}}
\def\R{\ensuremath{\mathbb{R}}}
\def\Z{\ensuremath{\mathbb{Z}}}
\def\AA{\ensuremath{\mathcal A}}
\def\EE{\ensuremath{\mathcal E}}
\def\FF{\ensuremath{\mathcal F}}
\def\GG{\ensuremath{\mathcal G}}
\def\HH{\ensuremath{\mathcal H}}
\def\II{\ensuremath{\mathcal I}}
\def\JJ{\ensuremath{\mathcal J}}
\def\KK{\ensuremath{\mathcal K}}
\def\LL{\ensuremath{\mathcal L}}
\def\MM{\ensuremath{\mathcal M}}
\def\OO{\ensuremath{\mathcal O}}
\def\QQ{\ensuremath{\mathcal Q}}
\def\RR{\ensuremath{\mathcal R}}
\def\SS{\ensuremath{\mathcal S}}
\def\TT{\ensuremath{\mathcal T}}
\def\UU{\ensuremath{\mathcal U}}
\def\VV{\ensuremath{\mathcal V}}
\def\WW{\ensuremath{\mathcal W}}
\def\YY{\ensuremath{\mathcal Y}}
\def\bk{\ensuremath{\mathbf k}}
\newcommand\Ext{\operatorname{Ext}\nolimits}
\newtheorem{theorem}{Theorem}[section]
\newtheorem{proposition}{Proposition}[section]
\newtheorem{lemma}{Lemma}[section]
\newtheorem{remark}[theorem]{Remark}
\newtheorem{definition}[theorem]{{\bf Definition}}
\title[Moduli of Sheaves on Fano Threefolds]{Moduli of Rank Two Semistable Sheaves on Rational Fano Threefolds of the Main Series}
\begin{document}

\renewcommand{\abstractname}{Abstract} \renewcommand{\contentsname}{Contents} \renewcommand{\proofname}{Proof}
\renewcommand{\refname}{References}

\maketitle

\begin{center}
\large Alexander~S.~Tikhomirov, Danil~A.~Vassiliev
\end{center}

\begin{abstract}

In this paper we investigate the moduli spaces of semistable coherent
sheaves of rank two on the projective space $\p3$ and the following
rational Fano manifolds of the main series - the three-dimensional quadric
$X_2$, the intersection of two 4-dimensional quadrics $X_4$ and the Fano manifold 
$X_5$ of degree 5. For the quadric $X_2$, the boundedness of the third Chern class $c_3$ of rank two semistable objects in $\mathrm{D}^b(X_2)$, including sheaves, is proved. An explicit description is given of all the moduli spaces of semistable sheaves of rank
two on $X_2$, including reflexive ones, with a maximal third class
$c_3\ge0$. These spaces turn out to be irreducible smooth
rational manifolds in all cases, except for the following
two: $(c_1,c_2,c_3)=(0,2,2)$ or (0,4,8). Several new infinite series of rational
components of the moduli spaces of semistable sheaves of rank two on $\p3$, $X_2$, $X_4$ and $X_5$ are constructed,
as well as a new infinite series of irrational
components on $X_4$. The boundedness of the class $c_3$ is proved for $c_1=0$ and
any $c_2>0$ for stable reflexive sheaves of general type on
manifolds $X_4$ and $X_5$.
\end{abstract}

\tableofcontents

\section{Introduction}
\vspace{5mm}

In 1980, R. Hartshorne, while investigating in \cite{H} the spectra of stable
reflexive coherent sheaves of rank two on the projective space
$\mathbb{P}^3$, proved the boundedness of the third Chern class $c_3$ of these sheaves
for fixed first and second Chern classes $c_1$ and $c_2$.
The exact estimates he obtained for the class $c_3$ have the form (see \cite[Thm. 8.2]{H})
\begin{equation}\label{estimates}
	c_3\le c^2_2-c_2+2,\ \ \ \text{if}\ \ \ c_1=0;\ \ \ \ \ \ \ \ \ \  
	c_3\le c^2_2\ \ \ \text{if}\ \ \ c_1=-1.
\end{equation}
In the same work, the irreducibility, smoothness and rationality of the moduli spaces of such sheaves
with $c_1=-1$, arbitrary $c_2>0$ and maximum $c_3=c^2_2$ are proved.
In 2018, B. Schmidt in \cite{Sch18}, investigating the properties of tilt stability in the derived category $\mathrm{D}^b(\P^3)$,
proved that the estimates \eqref{estimates} are true for all semistable
sheaves of rank two on $\P^3$, and gave an explicit description of their moduli
space for $-1\le c_1\le0$, $c_2>0$ and maximal $c_3$. As a consequence, he
obtained that these spaces are irreducible smooth
rational projective varieties. It is not difficult to see that
the moduli spaces of reflexive sheaves described by Hartshorne
are open subsets of these manifolds (see Theorem 4 below).
We also note that quite recently in the 2023 work \cite{Sch23}, Schmidt
generalized the above results to the case of sheaves on $\P^3$ of all ranks from 0 to 4. 

In this paper, we study the moduli spaces of semistable
rank two sheaves on rational three-dimensional Fano varieties of the main series.
There are four such varieties --- these are the projective space $X_1=\P^3$,
the three-dimensional quadric $X_2$, the complete intersection $X_4$ of two quadrics in
the space $\P^5$, and the section $X_5$ of the grassmannian $\mathrm{Gr}(2,5)$
embedded by Pl\"ucker in the space $\P^9$ by a linear subspace $\P^6$
(see, e. g., \cite{Isk} or \cite{IP}). Here the subscript $i$ of
the variety $X_i$ is its projective degree. Chern classes of bundles $E$
on $X=X_i$ are defined by the integers $c_1,c_2,c_3$ (see formulas \label{c i} below), and the corresponding moduli spaces (Gieseker-Maruyama moduli schemes) of semistable bundles of rank two on $X$ will be denoted by
$M_X(2;c_1,c_2,c_3)$.

The first direction of research in this paper concerns the question of
boundedness of the third Chern class $c_3$ of semistable rank 2 sheaves
on $X$ with fixed $c_1\in\{-1,0\}$ and $c_2\ge0$ and of getting estimates
for the third Chern class $c_3$. Using the tilt stability technique in
the derived category $\mathrm{D}^b(X)$, we give a complete answer to this
question for the three-dimensional quadric $X_2$ in the following theorem (see the statements
(3.1)-(4.2) in Theorem \ref{Theorem 3.1} of the paper).

\vspace{2mm}
\textbf{Theorem 1.}
\textit{(i) Let $E$ be a semistable sheaf of rank 2 with $c_1=-1$ on the quadric $X_2$. Then $c_2\ge0$ and
	$c_3\le\frac12c_2^2$ if $c_2$ is even, and, respectively, $c_3\le\frac12(c_2^2-1)$ if $c_2$ is odd.\\
	(ii) Let $E$ be a semistable sheaf of rank 2 on $X_2$ with $c_1(E)=0$.
	Then $c_2\ge0$ and $c_3\le\frac 12c_2^2$, if $c_2$ is even, and,
	respectively, $c_3\le\frac 12(c_2^2+1)$, if $c_2$ is odd.\\
	These estimates are exact for all $c_3\ge0$.}\\

The proof of this theorem is based on the study of the relationship between
tilt semistability and Bridgland semistability in $\mathrm{D}
^b(X_2)$. The key here is Schmidt's important technical result
(2014) on the description of a subcategory in $\mathrm{D}^b(X_2)$ generated by
a torsion pair in the sense of Bridgeland - see Proposition \ref{heart}
of this paper.

Unfortunately, no analogues of this result are known to date
for varieties $X_4$ and $X_5$. Therefore, for these varieties it is not possible to
use the same method to obtain exact upper bounds for the class $c_3$ for all
semistable sheaves of rank 2 on $X_4$ and $X_5$. However, using more
traditional technique considering the behavior of stable sheaves at
standard birational transformations $X_4\dashrightarrow X_1$ and
$X_5\dashrightarrow X_2$, we give a partial answer to the question about
boundedness of $c_3$ for a sufficiently wide class of sheaves on $X_4$ and
$X_5$.

Namely, we consider stable reflexive sheaves of rank 2 with $c_1=0$,
called in this paper sheaves of general type. These sheaves $E$ on $X_i,\ i=4.5$, are such that $E|_l\cong\OO_{\P^1}^{\oplus2}$ for a general line
$l\subset X_i$, and lines for which either $E|_l\cong\OO_{\P^1}(a)\oplus
\OO_{\P^1}(-a)$ with $a\ge2$, or $E|_l$ is not locally free,
constitute a subset of dimension $\le0$ in the base of the family of lines on
$X_i$ (see definition \ref{def 4.10} of this paper). In Theorem \ref{Theorem 4.4} we give examples of infinite series of components of moduli spaces of
semistable sheaves in which the general sheaf is a reflexive sheaf of
general type. (Presumably the property of being a sheaf of general type is true
for all stable reflexive sheaves of rank 2 with $c_1=0$, that is,
perhaps an analogue of the Grauert-M\"ulich Theorem holds for them,
which is known to be valid for stable reflexive sheaves of rank 2 on $X_1$.) For
sheaves of general type we prove the following theorem (see Theorems
\ref{Thm 6.4} and \ref{Thm 6.1} of this paper).

\vspace{2mm}
\textbf{Theorem 2.}
\textit{Let $E$ be a stable reflexive sheaf of rank 2 of general type with
	Chern classes $c_1=0$, $c_2>0$, $c_3$ on the variety $X_4$ or $X_5$.
	Then the following inequalities are true for the class $c_3$ of the sheaf $E$.\\
	(i) On $X_4$: $c_3\le c_2^2-c_2+2$.\\
	(ii) On $X_5$: $c_3\le\frac 29c_2^2$ if $c_2$ is even, and,
	respectively, $c_3\le\frac 29c_2^2+\frac 12$, if $c_2$ is odd.}\\
Whether these estimates are sharp is an open question.

\vspace{2mm}
The second direction of research in this paper is the construction of
new infinite series (with growing class $c_2$) of moduli components of semistable sheaves of rank two on the varieties $X_1$, $X_2$,
$X_4$ and $X_5$, including an explicit description of general\footnote{Here and everywhere
below under a general sheaf in a flat irreducible family of sheaves on $X$ with
base $B$ we understand a sheaf corresponding to a point in $B$ belonging to a
dense open subset in $B$.} sheaves in these components. First
two infinite series of such components on $X_1=\P^3$ were constructed
R.~Hartshorne in \cite{SVB} in 1978 for vector bundles
(this is a series of instanton components with $c_1=0$ and a parallel series
components with $c_1=-1$) and in the work \cite{H} in 1980 for reflexive
bundles with $c_1=-1$ and maximal $c_3$, and with $c_1=0$ and maximal
spectrum. In 1978, W.~Barth and K.~Hulek \cite{BH} built another
infinite series of families of stable vector bundles of rank 2 on
$\P^3$, and in 1981 G.~Ellingsrud and S.~A.~Str\o mme proved \cite{ESt},
that these families are open subsets of irreducible components of
moduli spaces. In \cite{V1} (1985) and \cite{V2} (1987)
V.~K.~Vedernikov constructed new infinite series of families of stable
vector bundles of rank 2 on $\P^3$, and in 1984 A.~P.~Rao \cite{R}
built a more general series of families, including the Vedernikov series.
L.~Ein independently described these families in 1988 and
proved that they are open subsets of irreducible
components of moduli spaces. In 2019 A. A. Kytmanov, A. S. Tikhomirov
and S. A. Tikhomirov in the paper \cite{KTT} proved the rationality of significant
part of the components of these series.

A large number of new infinite series of components of moduli spaces of
semistable sheaves of rank 2 on $\P^3$ were found in the work of M.~Jardim,
D.~Markushevich and A.~S.~Tikhomirov \cite{JMT} in 2017 and in the work of
C.~Almeida, M.~Jardim and A.~S.~Tikhomirov \cite{AJT} in 2022, and
general sheaves in these components were described. B.~Schmidt in the above work
\cite{Sch18} gave a complete description of all schemes of stable sheaves of
rank 2 with maximal Chern class $c_3$, proving their projectivity,
irreducibility for any admissible values of the Chern classes $c_1$ and
$c_2$, rationality and in almost all cases smoothness. Note that
the last property of smoothness is a rather unexpected phenomenon for
projective Gieseker-Maruyama moduli schemes of semistable sheaves of rank 2
on three-dimensional varieties. 

As for the varieties $X_2$, $X_4$ and $X_5$, by now
on each of them only one infinite series of moduli components of semistable 
sheaves of rank 2 was found. These are the series of components containing as
open sets the families of instanton bundles. Instantonic
bundles on $X_2$ were defined by L.~Costa and R.~M.~Miro-Roig in
\cite{CM} in 2009, and on $X_4$ and $X_5$ and other Fano varieties by
A.~Kuznetsov \cite{K} in 2012 and D.~Faenzi \cite{F} in 2013. In work
\cite{F} D.~Faenzi proved that families of instanton bundles on
$X_2$, $X_4$ and $X_5$ are indeed open subsets of
irreducible components of moduli spaces, which are reduced at a general point
and have the expected dimension. In recent years, an extensive number of works 
were devoted to the study of instanton series of bundles, a review of which can be
found, for example, in \cite{ACG} and \cite{CJMM}.

In this paper we construct several new infinite series of irreducible
rational components of moduli spaces of semistable sheaves of rank 2
on the varieties $X_1$, $X_2$, $X_4$ and $X_5$. We describe general sheaves in
these components and prove their reflexivity, and also find
dimensions of the constructed components. These results are proven in Theorems
\ref{Theorem 4.1}, \ref{Theorem 4.1S}, \ref{Theorem 4.2}, \ref{Theorem
	4.2S} and \ref{Theorem 4.3}. They are collected in the following theorem.

\vspace{2mm}
\textbf{Theorem 3.}
\textit{Let $X$ be one of the varieties $X_1$, $X_2$, $X_4$, $X_5$, and
	let $\OO_X(1)$ be the ample sheaf on $X$ such that $\mathrm{Pic}(X)=
	\Z[\OO_X(1)]$. Consider a sheaf $E$ of rank 2 on $X$ defined by one of
	nontrivial extensions of the form
\begin{equation}\label{eqn 1}
0\to F_i\to E\to G_j\to0,\ \ \ \ \ 1\le i\le3,\ \ \ 1\le j\le2, 
\end{equation}
where $F_1=\OO_X(-n)^{\oplus2}$, $F_2=F(-n)$, where $F$ is a rank 2 sheaf of one of
the types (I)-(III) described in subsection 4.3, $G_1=\OO_S(m)$, where $S\in|
\OO_X(k)|$, and the sheaves $F_3$ and $G_2$ are defined in the case of the quadric $X=X_2$, namely,
$F_3=\SS(-n)$, where $\SS$ is the spinor bundle on $X_2$ with $\det\SS=\OO_X(1)$,
and $G_2=\II_{\P^1,S}(m)$, where $S\in|\OO_X(1)|$, $\P^1$ is a line on the
surface $S$. Let $M_X(v)$ be the Gieseker-Maruyama moduli scheme of
semistable sheaves on $X$ with Chern character $v=\mathrm{ch}(E)$,
determined from the triple \eqref{eqn 1}, and let
\begin{equation}\label{eqn 2}
	M:=\{[E]\in M_X(v)\ |\ E \ \textit{is a Gieseker-stable
		extension}\ \eqref{eqn 1}\}.
\end{equation}
Then the following statements are true.\\
1) For $X_1,\ X_2,\ X_4,\ X_5$ in the case\ $i=j=1$, $k\ge1$, $n=\lceil\frac k2\rceil$,
$m<-n$,\\
2) for $X_1,\ X_4,\ X_5$ in the case of $i=2$, $j=1$, $k\ge1$, $n=\lfloor\frac k2\rfloor$,
$m<-n$,\\
3) for $X_2$ in each of the cases \\
3.1)\ \ $i=1$, $j=2$, $n=1$, $m\le-1$,\\
3.2)\ \ $i=3$, $j=1$, $k\ge1$, $n=\lfloor\frac k2\rfloor+1$, $m\le-n$,\\
3.3)\ \ $i=3$, $j=2$, $1\le k\le2$, $n=1$, $m\le-1$,\\
the set $M$ is a smooth dense open subset of an irreducible component
$\overline{M}$ of the moduli scheme $M_X(v)$. Moreover, $M$ is a fine moduli space, and reflexive sheaves form a dense open set in $M$. Moreover,
all components $\overline{M}$ from the infinite series 1), 2) and 3.1)-3.3) are rational varieties for each of the varieties $X_l,\ l=1,2,4,5$, except for the series 2) for $X=X_4$, in which each component is irrational. Moreover, in all cases
the dimensions of the components $\overline{M}$ are found as polynomials from $\mathbb{Q}[k,m,n]$
or $\mathbb{Q}[m]$, respectively.}

\vspace{2mm}
A significant part of this paper is devoted to the study of
semistable sheaves of rank 2 with maximal class $c_3$ on the quadric
$X_2$. We show that for $c_1\in\{-1,0\}$ and all values of the class
$c_2$, except for a few small values, every such sheaf is given by an
extension of the form \eqref{eqn 1}, that is, in the notation \eqref{eqn 2} we have
equality $M=\overline{M}$. In this case the construction from the
proof of Theorem 3 allows for a significant refinement, giving complete
description of all moduli spaces of semistable sheaves with maximal
class $c_3$ on $X_2$. In the regeneraling cases of small values of $c_2$ and
maximal $c_3\ge0$ it is also possible to obtain an explicit description of
moduli spaces. (The only case of negative maximal
$c_3$ --- the case $(c_1,c_2,c_{3\mathrm{max}})=(0,1,-1)$ -- is specified in
Remark \ref{Remark 3.2} of the paper.) These results, proven in Theorems
\ref{moduli with c3 max} -- \ref{moduli with c3 max 4} are collected in
the following two theorems.

\vspace{2mm}
\textbf{Theorem 4.}
\textit{Let $X=X_2$ be a quadric, and $M_X(v)$ be the Gieseker-Maruyama moduli scheme of semistable sheaves $E$ of rank 2 on $X$ with Chern classes
	$(c_1,c_2,c_3)$, where $c_1\in\{-1,0\}$, $c_2\ge0$, $c_3=c_{3\mathrm{max}}\ge0$ is maximal for each $c_2$, and
	\begin{equation*}
		v=\ch(E)=(2,c_1H,\frac12(c_1^2-c_2)H^2,\frac12(c_{3\mathrm{max}}+
		\frac23c_1^3-c_1c_2)[\mathrm{pt}]),
	\end{equation*}
	where $H=c_1(\OO_X(1))$. Then the following statements hold.\\
	(1.i) For $c_1=-1$, even $c_2=2p$, $p\ge2$, and $c_{3\mathrm{max}}=
	\frac12c_2^2$ the variety $M_X(v)$ is a grassmannization of 2-dimensional
	quotient spaces of the vector bundle of rank $\frac14(c_2+2)^2$ on the
	space $\P^4$ defined by the first formula \eqref{sheaf A} for
	$n=1$ and $m=-p$. In this case $\dim M_X(v)=\frac12(c_2+2)^2$.\\
	(1.ii) For $c_1=-1$, odd $c_2=2p+1$, $p\ge1$, and $c_{3\mathrm{max}}=
	\frac12(c_2^2-1)$ the variety $M_X(v)$ is the grassmannization of 2-dimensional
	quotient spaces of the vector bundle of rank $\frac14(c_2+1)(c_2+3)$ on the
	grassmannian $\mathbb{G}=\mathrm{Gr}(2,4)$ defined by the second formula
	\eqref{sheaf A} for $m=-p$. In this case $\dim M_X(v)=\frac12(c_2+1)(c_2+3)$.\\
	(1.iii) For $c_1=0$, odd $c_2=2p+1$, $p\ge1$, and $c_{3\mathrm{max}}=
	\frac12(c_2^2+1)$ the variety $M_X(v)$ is a projectivization of the
	vector bundle of rank $\frac12(c_2+1)(c_2+3)$ on the space
	$\P^4$, defined by the formula \eqref{sheaf AS} with $n=1$ and $m=-p$. In this case $\dim M_X(v)=\frac12c_2^2+2c_2+\frac92$.\\
	(1.iv) For $c_1=0$, even $c_2=2p$, $p\ge3$, and $c_{3\mathrm{max}}=
	\frac12c_2^2$ the variety $M_X(v)$ is a projectivization of the vector
	bundles of rank $\frac12c_2^2+2c_2+1$ on the grassmannian $\mathbb{G}$,
	defined by the formula \eqref{sheaf AS I} for $m=1-p$. In this case $\dim
	M_X(v)=\frac12c_2^2+2c_2+4$.\\
	(2) In all the above cases, the scheme $M_X(v)$ is irreducible and is a
	smooth rational projective variety, all sheaves from $M_X(v)$
	are stable, the general sheaf in $M_X(v)$ is reflexive, and $M_X(v)$ is a fine
	moduli space.}

\vspace{2mm}
\textbf{Theorem 5.}
\textit{Under the conditions and notation of Theorem 4, the following statements are true:\\
	(1) For $c_1=-1$, $c_2=1$ and $c_{3\mathrm{max}}=0$, the variety $M_X(v)$ is a point $[\SS(-1)]
	$.\\
	(2) For $c_1=c_2=c_{3\mathrm{max}}=0$, the variety $M_X(v)$ is a point $[\OO_X^{\oplus2}]$.\\
	(3) For $c_1=-1$, $c_2=2$ and $c_{3\mathrm{max}}=2$ we have $M_X(v)\simeq
	G(2,5)$.\\
	(4) For $c_1=0$, $c_2=2$ and $c_{3\mathrm{max}}=2$ the scheme $M_X(v)$ is
	irreducible, has dimension 9 and is not smooth. \\
	(5) For $c_1=0$, $c_2=4$ and $c_{3\mathrm{max}}=8$ the scheme $M_X(v)=
	M_X(2;0,4,8)$ is irreducible and equal to a union of two irreducible subsets $M_1$ and
	$M_2$. These subsets are described as follows.\\
	(5.i) $M_1$ is a smooth rational variety of dimension 20,
	which is the projectivization of a locally free sheaf of rank 17 on
	grassmannian $\mathbb{G}$. $M_1$ is a fine moduli space and all sheaves in $M_X(v)_1$ are stable. Moreover, the scheme $M_X(v)$ is nonsingular along
	$M_1$.\\
	(5.ii) the scheme $M_2$ is irreducible, has dimension 21, and polystable
	sheaves in $M_2$ form a closed subset of dimension 12 in $M_2$, in
	which the scheme $M_X(v)$ is not smooth.}  

\vspace{2mm} 

Let us proceed on to a brief summary of the contents of the paper. 
In section 2 we recall necessary technical means to work in derived 
categories of coherent sheaves on Fano varieties, which are used in
further. In section 3 the key result of the paper is proved --- the 
theorem \ref{Theorem 3.1} on the boundedness of the third Chern 
class $c_3$ of semistable objects, including sheaves, in $\mathrm{D} 
^b(X_2)$ and their complete classification is given for maximal 
values of $c_3\ge0$. In section 4 we construct new infinite series 
of components of moduli spaces of rank two semistable sheaves on the 
varieties $X_1,\ X_2,\ X_4$ and $X_5$. As a special case of these 
series, in section 5 we describe moduli spaces of rank two 
semistable sheaves with the maximal third Chern class on the quadric 
$X_2$. At the end of the same section we prove the disconnectedness 
of moduli space $M_{X_2}(2,0,4,8)$. Section 6 is dedicated to a 
proof of boundedness of the class $c_3$ of semistable rank two 
reflexive sheaves of general type with $c_1=0$ on the varieties 
$X_4$ and $X_5$.

\subsection*{Notation.}
\begin{center}
\begin{tabular}{ r l }
$\bk=\C$ & base field\ \\
$X_1$ & three-dimensional projective space $\P^3$\\
$X_2$ & smooth three-dimensional quadric in $\p4$\\
$X_4$ & smooth intersection of two four-dimensional quadrics in $\P^5$\\
$X_5$ & smooth section of the grassmannian $G(2,5)$ embedded by\\
& Pl\"ucker into the space $\p9$, by a linear subspace $\P^6$\\
$\SS$ & spinor bundle on the quadric $X_2$ with $\det\SS\cong\OO_Q(1)$\\
\end{tabular}
\end{center}		

\begin{center}
\begin{tabular}{rl}
$X$ & one of the varieties $X_1$, $X_2$, $X_4$, $X_5$\\
$H$ & positive generator of the Picard group $\mathrm{Pic}X\simeq\Z$ 
-- \\ 
& class of hyperplane section with embedding $X=X_i\hookrightarrow
\P^{2+i}$,\\
& $i=1,2$, $X_i\hookrightarrow\mathbb{P}^{1+i}$, respectively, 
$i=4,5$ \\
$\mathrm{Coh}(X)$ & category of coherent sheaves on $X$ \\
$[E]$ & $S$-equivalence class of an $H$-Gieseker-semistable sheaf \\
& $E\in\mathrm{Coh}(X)$ \\
$\mathrm D^b(X)$ & bounded derived category of coherent sheaves on
$X$ \\
$\mathcal H^{i}(E)$ & $i$th cohomology group of the complex 
$E\in\mathrm D^b(X)$ \\
$\ch(E)$ & Chern character of the object $E \in\mathrm D^b(X)$ \\
$\ch_{\le m}(E)$ & $(\ch_0(E), \ldots, \ch_m(E))$ \\
$M_X(v)$ & space (scheme) of $H$-Gieseker-semistable \\
& sheaves $E$ on $X$ with $\ch(E)=v$\\
$M_X(r;c_1,c_2,c_3)$ & alternative, more traditional, notation
of the moduli scheme \\
& $M_X(v)$, where $r,c_1,c_2,c_3$ are the rank and Chern classes of 
\\ 
& a sheaf $[E]\in M_X(v)$ \\
$\rmext^i(\FF,\GG)$ & $:=\dim\Ext^i(\FF,\GG)$ for coherent sheaves 
$\FF$ and $\GG$ on $X$\\
\end{tabular}
\end{center}

\vspace{5mm}

\section{Preliminaries}

\vspace{5mm}

This section contains the necessary definitions and results that we 
will use below.

Let $X$ be one of the varieties $X_i,\ i=1,\ 2,\ 4,\ 5$. Cohomology 
ring $H^*(X,\Z)$ is generated by the classes of a hyperplane section 
$H\in H^2(X,\Z)$, a line $L\in H^4(X,\Z)$ (understood as a projective line in the space $\p{2+i}\supset X_i=X$ for $i=1,2$, respectively, $X_i\hookrightarrow\mathbb{P}^{1+i}$ for $i=4,5$) and a point $\{\mathrm{pt}\}\in H^6(X,\Z)$ (for simplicity we will also
denote the class of a point by 1). We have
\begin{equation}
	H^2=iL,\ \ H\cdot L=1,\ \ H^3=i,\ \ \ \textrm{}\ \ \ X=X_i,\ \ \ i=1,2,4,5.
\end{equation}

The \textit{slope} $\mu(E)$ of a coherent sheaf $E\in\Coh(X)$ is defined as $$\mu(E)=\frac{H^2\cdot\ch_1(E)}{H^3\cdot\ch_0(E)},$$ in the case of division by 0 we set $\mu(E)=
+\infty$. A coherent sheaf $E$ is called
$\mu$-\textit{(semi)stable} if for any proper
subsheaf $0\ne F\hookrightarrow E$ the inequality $\mu(F)<\nolinebreak{(\le)}\,\mu(E/F)$ holds.

Let $f, g \in \R[m]$ be polynomials. If $\deg(f) < \deg(g)$, then we set
$f > g$. If $d = \deg(f) = \deg(g)$ and $a$, $b$ are the leading coefficients in $f$,
$g$ respectively, then we put $f < (\leq) g$ if $\frac{f(m)}{a} < (\leq)
\frac{g(m)}{b}$ for all $m \gg 0$.
For an arbitrary sheaf $E\in\Coh(\P^3)$ we define the numbers $a_i(E)$ for $i \in \{0,
1, 2, 3\}$ through the Hilbert polynomial $P(E, m):=\chi(E(m))=a_3(E)m^3+a_2(E) m^2+
a_1(E) m + a_0(E)$. In addition, we set $P_2(E,m):=a_3(E)m^2+a_2(E)m+a_1(E)$.
The sheaf $E \in \Coh(\P^3)$ is called \emph{(Gieseker-)(semi)stable},
respectively, \emph{(Gieseker-)2-(semi)stable}, if for any
its proper subsheaf $0\ne F \into E$ the inequality $P(F, m) < (\leq)P(E/F, m)$ holds
(respectively, the inequality $P_2(F, m) < (\leq) P_2(E/F, m)$ holds).
Stability, 2-stability and $\mu$-stability of a sheaf satisfy the relations:
\begin{equation*}
	\xymatrix{
		\text{$\mu$-stability} \ar@{=>}[r] & \text{$2$-stability} \ar@{=>}[r] & \text{stability} \ar@{=>}[d] \\
		\text{$\mu$-semistability} & \text{$2$-semistability} \ar@{=>}[l] &\text{semistability} \ar@{=>}[l]
	}
\end{equation*}
Let us recall the concept of tilt stability. Let $\beta\in\R$. Let us define \emph{twisted
	Chern character} as $\ch^\beta=e^{-\beta H}\cdot\ch$. Note that for $\beta\in\Z$ and for any $E\in\mathrm D^b(X)$ the equality $\ch^\beta(E)=\ch(E(-\beta))$ holds.
Let us present explicit formulas for the components $\ch_i^\beta=\ch_i^\beta(E)$:
\begin{equation}
	\begin{split}
		& \ch_0^\beta=r,\ \ch_1^\beta=\ch_1-\beta H\ch_0,\ \ch_2^\beta=\ch_2-\beta H\ch_1+
		\frac{\beta^2}{2} H^2\ch_0,\\
		& \ch_3^\beta=\ch_3-\beta H\ch_2+\frac{\beta^2}{2} H^2\ch_1-\frac{\beta^3}{6}
		H^3\ch_0.
	\end{split}
\end{equation}
In particular, for 
\begin{equation}\label{c i}
\begin{split}
& X=X_2\ \ \ \ \textit{and}\ \ \ \ E\in\mathrm D^b(X),\\
& v=\ch(E)=(r,cH,dH^2,e[\mathrm{pt}]),\ \ \ \
r,c\in\Z,\ \ d\in\frac{1}{2}\Z,\ \ e\in\frac{1}{6}\Z,\\
& c(E)=(1,c_1(E),c_2(E),c_3(E))=(1,c_1H,c_2[l],c_3[\mathrm{pt}]),\ \ \ c_1,c_2,c_3\in\Z,
\end{split}
\end{equation} 
we have:
\begin{equation}\label{c vers ch}
\begin{split}
&\ch_0^\beta(E)=r,\ \ \ch_1^\beta(E)=(c-\beta r)H,\ \ \ch_2^\beta(E)=(d-\beta 
c+\frac{\beta^2}{2} r)H^2,\\
& \ch_3^\beta(E)=e-2\beta d+\beta^2c-\frac{\beta^3}{3}r,
\end{split}
\end{equation}
\begin{equation}\label{c vers ch 2}
c_1=c,\ \ \ c_2=c^2-2d,\ \ \ c_3=2e+\frac13c^3-2cd,\\
\end{equation}
\begin{equation}\label{c vers ch 3}
\ch(E)=(r,c_1H,\frac12(c_1^2-c_2)H^2,\frac12(c_3+\frac23c_1^3-c_1c_2)[\mathrm{pt}]).
\end{equation}
Define a \emph{torsion pair}
$$
\mathcal T_\beta=\{E\in\Coh(X)\colon\text{any}\ \text{quotient}\ E\to 
G\ \text{satisfies}\ \mu(G)>\beta\},
$$
$$
\FF_\beta=\{E\in\Coh(X)\colon\text{any}\ \text{subsheaf}\ 0\ne F\to 
E\ \text{satisfies}\ \mu(F)\le\beta\}
$$
and a category $\Coh^\beta(X)$ as the subcategory $\langle\FF_\beta[1],\TT_\beta\rangle$ in $D^b(X)$. For $\alpha\in\R_+$ the \emph{tilt-slope} of an object
$E\in\Coh^\beta(X)$ is defined as
$$
\nu_{\alpha,\beta}(E)=\nu_{\alpha,\beta}(\ch_0(E),\ch_1(E),\ch_2(E))=\frac{H\cdot
\ch_2^\beta(E)-\frac{\alpha^2}{2}H^3\cdot\ch_0^\beta(E)}{H^2\cdot\ch_1^\beta(E)}.
$$
An object $E\in\mathrm \Coh^\beta(X)$ is called to be \textit{tilt-(semi)stable} (or \\
$\nu_{\alpha,\beta}$-\textit{(semi)stable}), if for any subobject
$0\ne F\hookrightarrow E$ we have $\nu_{\alpha,\beta}(F)<(\le)\, 
\nu_{\alpha,\beta}(E/F)$. In case where the last inequality becomes an equality,
we will also say that $E$ \textit{is destabilized by the exact triple} $0\to F\to E\to E/F\to0$.

\begin{proposition}\label{Prop 2.1}
(i) An object $E\in\Coh^{\beta}(X)$ is $\nu_{\alpha,\beta}$-(semi)stable for $\beta<\mu(E)$ 
and $\alpha\gg0$ iff $E$ is a 2-(semi)stable sheaf.\\
(ii) Let $X=X_2$. For any $\nu_{\alpha,\beta}$-semistable object 
$E\in\Coh^\beta(X_2)$ with $\ch(E)=(r,cH,dH^2,e)$ we have the following inequalities:
$$\Delta(E)=\Delta(\ch E):=\frac{(H^2\cdot\ch_1^\beta(E))^2-2(H^3 \cdot
\ch_0^\beta(E)) (H\cdot\ch_2^\beta(E))}{(H^3)^2}=c^2-2rd\ge 0,$$
$$W_{\alpha,\beta}(E):=\alpha^2\Delta(E)+\frac{4(H\cdot\ch_2^\beta(E))^2}{(H^3)^2}
-\frac{6(H^2\cdot\ch_1^\beta(E))\ch_3^\beta(E)}{(H^3)^2}=$$
$$=(\alpha^2+\beta^2)(c^2-2rd)+(3re-2cd)\beta+4d^2-3ce\ge 0.$$
\end{proposition}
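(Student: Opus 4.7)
The plan is to handle the two parts separately. Part (i) is a large-volume-limit calculation identifying tilt stability with a Gieseker-type notion on actual sheaves, while part (ii) consists of two Bogomolov-type inequalities, both of which are already available in the tilt-stability literature.

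For part (i), the hypothesis $\beta<\mu(E)$ places $E$ in the torsion class $\TT_\beta$, so $E\in\Coh(X)$ is an honest sheaf. I would then expand the tilt-slope
$$\nu_{\alpha,\beta}(E)=-\frac{\alpha^2}{2}\cdot\frac{H^3\cdot\ch_0^\beta(E)}{H^2\cdot\ch_1^\beta(E)}+\frac{H\cdot\ch_2^\beta(E)}{H^2\cdot\ch_1^\beta(E)}$$
as a polynomial of degree two in $\alpha$. For a proper subsheaf $0\neq F\subsetneq E$ with quotient $Q=E/F$, comparing $\nu_{\alpha,\beta}(F)$ to $\nu_{\alpha,\beta}(Q)$ as $\alpha\to\infty$ reduces, via the leading $\alpha^2$-coefficient, to the comparison of $\mu$-slopes; on the locus where the $\mu$-slopes coincide, the constant term compares $H\cdot\ch_2^\beta/(H^2\cdot\ch_1^\beta)$. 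Via Hirzebruch--Riemann--Roch this latter quantity translates exactly into the subleading coefficient of the reduced polynomial $P_2(\cdot,m)$, so the limiting inequality agrees with the 2-Gieseker condition in the paper's sense. The only remaining step is to rule out destabilization in $\Coh^\beta(X)$ by subobjects that are not honest subsheaves, i.e.\ those with nontrivial $\FF_\beta[1]$-components; this is standard, since in the regime $\alpha\gg0$ the $\alpha^2$-coefficient dominates and any such non-subsheaf subobject has the wrong sign of the rank/twisted-degree ratio to destabilize.

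For part (ii), both inequalities are cited rather than reproved. The classical discriminant inequality $\Delta(E)\ge 0$ is the Bogomolov inequality for $\nu_{\alpha,\beta}$-semistable objects, established by Bayer--Macr\`i--Toda for arbitrary smooth projective threefolds; it follows from the fact that a tilt-semistable object admits a filtration whose factors are, after appropriate shifts, $\mu$-semistable sheaves of controlled slope, to which the classical Bogomolov inequality applies. The second, stronger inequality $W_{\alpha,\beta}(E)\ge 0$ is the generalized Bogomolov--Gieseker inequality of Bayer--Macr\`i--Toda, conjectural on a general smooth projective threefold but proved on the three-dimensional quadric $X_2$ by Schmidt (2014). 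The main obstacle, were one to reprove $W_{\alpha,\beta}\ge 0$ independently, would be exactly Schmidt's delicate wall-crossing analysis of tilt-semistable objects on $X_2$ with $\nu_{\alpha,\beta}=0$, carried out via the exceptional collection involving the spinor bundle $\SS$; for the present proposition it suffices to invoke his theorem.
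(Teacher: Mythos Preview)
Your proposal is correct and matches the paper's approach: both parts are handled by citation rather than original argument. The paper cites Bridgeland \cite[Prop.~14.2]{Bri} for (i) and Macr\`i--Schmidt \cite{MS18} for (ii), while you sketch the large-volume-limit mechanism behind (i) and cite the original sources (Bayer--Macr\`i--Toda and Schmidt 2014) for (ii); these are the same results, just referenced differently.
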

\begin{proof}
The statement (i) is proven in \cite[Prop. 14.2]{Bri} for K3 surfaces, but an analogous proof holds in our case. For a proof of the first inequality in (ii) see \cite[Thm. 2.1]{MS18}, and for the second one --- in \cite[Assumption C]{MS18}.
\end{proof}

The set of semistable objects changes with variation of $(\alpha,\beta)$ pairs.
Consider $H\cdot\ch_{\le 2}$ as a mapping to $\Lambda=\mathbb Z^2\oplus\frac{1}{2}\Z$. 
\textit{A numerical wall} with respect to a vector $v\in\Lambda$ is a
non-empty proper subset $W$ of the upper half-plane $\{\alpha,\beta)\ |\
\alpha>0\}$, given by an equation of the form $\nu_{\alpha,\beta}(v)=\nu_{\alpha,\beta}(w)$
for some other vector $w\in\Lambda$. A subset $\UU$ of the numerical wall $W$ is called an \textit{actual wall} (or simply \textit{wall}) if the set of
semistable objects $E\in \Coh^\beta(X)$ with $\ch(E)=v$ changes when passing through $\UU$.

\begin{proposition}[{\cite[Theorem 2.9]{Sch18}}] Let
	$v\in\Lambda$ be a fixed class. Everywhere below, numerical walls are considered
	with respect to class $v$.\\
	(i) Numerical walls are either semicircles with centers on the $\beta$-axis,
	or rays parallel to the $\alpha$-axis. If $v_0\neq 0$, then there is exactly one
	vertical numerical wall with the equation $\beta = v_1/v_0$. If $v_0 = 0$, then
	there are no actual vertical walls. \\
	(ii) The curve $\nu_{\alpha, \beta}(v) = 0$ is a hyperbola that can
	degenerate if $v_0 = 0$ or $\Delta(v) = 0$. Moreover, this hyperbola crosses all
	semicircular walls at their highest points.\\
	(iii) If two numerical walls defined by the classes $w,u \in \Lambda$ intersect,
	then $v$, $w$ and $u$ are linearly dependent. In particular, these two walls coincide.\\
	(iv) If a numerical wall has at least one point at which it is an
	actual wall, then it is an actual wall at every point.\\
	(v) If $v_0\neq0$, then there is a largest semicircular wall on both sides
	from a single vertical wall.\\
	(vi) If there is an actual wall, numerically defined by an exact triple of
	tilt-semistable objects $0\to F\to E\to G\to0$ such that $\ch_{\le 2}(E)= v$,
	then $\Delta(F) + \Delta(G) \le \Delta(E).$
	Moreover, the equality is satisfied if and only if $\ch_{\le 2}(G)=0$.\\
	(vii) If $\Delta(E) = 0$, then $E$ can destabilize only in a single numerical
	vertical wall. In particular, line bundles and their shifts by one are
    tilt-semistable everywhere.
\end{proposition}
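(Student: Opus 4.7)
The plan is to exploit the linear-fractional nature of $\nu_{\alpha,\beta}$ in the truncated Chern character $\ch_{\leq 2}$: cross-multiplying the wall equation $\nu_{\alpha,\beta}(v)=\nu_{\alpha,\beta}(w)$ produces a single polynomial identity in $(\alpha,\beta)$ that underlies the geometric statements in (i), (ii), (iii), (vii), while the remaining parts (iv), (v), (vi) are structural assertions about how tilt-semistability behaves along a wall.

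For (i), I would expand both sides of $\nu_{\alpha,\beta}(v)=\nu_{\alpha,\beta}(w)$ using the formulas (2.4) for $\ch_i^\beta$ and clear denominators. The resulting equation has the shape
\begin{equation*}
A(\alpha^2+\beta^2)+B\beta+C=0,\qquad A \text{ proportional to } v_0w_1-v_1w_0,
\end{equation*}
so when $A\neq 0$ it cuts out a circle with center on the $\beta$-axis, restricted to $\alpha>0$; when $A=0$ the equation degenerates to a linear equation in $\beta$, giving a vertical ray. One checks that $A=0$ forces $w$ to have the same $\mu$-slope as $v$, which identifies the unique vertical numerical wall when $v_0\neq 0$ and rules out actual vertical walls when $v_0=0$. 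For (ii), the equation $\nu_{\alpha,\beta}(v)=0$ is just the vanishing of the numerator, $H\cdot\ch_2^\beta(v)-\tfrac{\alpha^2}{2}H^3 v_0=0$; after substituting (2.4) this is manifestly a hyperbola unless $v_0=0$ or $\Delta(v)=0$. To see the apex statement, implicitly differentiate the semicircle equation from (i): $\alpha$ is maximized at $\beta=-B/(2A)$, and plugging this value into the numerator of $\nu$ produces zero, placing the apex on the hyperbola. For (iii), at a common point $(\alpha_0,\beta_0)$ of two numerical walls defined by $w,u$, the values $\nu_{\alpha_0,\beta_0}(v),\nu_{\alpha_0,\beta_0}(w),\nu_{\alpha_0,\beta_0}(u)$ all coincide; since the numerator and denominator of $\nu$ are $\Z$-linear in $\ch_{\le 2}$, this forces a linear relation among $v,w,u$ in $\Lambda$, and the two walls then coincide with the unique wall through that point.

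For (iv), I would argue by the openness of tilt-semistability in $(\alpha,\beta)$ combined with the local constancy of Jordan--H\"older factors along the wall: if a destabilizing subobject $F$ exists at a single point of a numerical wall $W$, then $F$ remains a tilt-subobject with equal slope on an open neighborhood in $W$, and the condition ``$F\hookrightarrow E$ in $\Coh^\beta(X)$ with $\nu_{\alpha,\beta}(F)=\nu_{\alpha,\beta}(E)$'' is also closed, so it propagates to all of $W$. For (v), combine (vi) with the Bogomolov inequality $\Delta\geq 0$ of Proposition~\ref{Prop 2.1}(ii): any tilt-semistable destabilizer $F$ on a wall satisfies $\Delta(F)\leq\Delta(E)$, and the discreteness of $\ch_{\le 2}(F)\in\Lambda$ then bounds the radii of semicircular walls from above, yielding a maximal one. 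For (vi), start from the destabilizing triple $0\to F\to E\to G\to 0$ with all three objects tilt-semistable of equal slope; since $\ch_{\le 2}(F)$ and $\ch_{\le 2}(G)$ sum to $\ch_{\le 2}(E)$ and both lie on the line of fixed tilt-slope, a direct expansion of $\Delta$ as a quadratic form gives $\Delta(F)+\Delta(G)\leq\Delta(E)$, with equality iff $\ch_{\le 2}(F)$ and $\ch_{\le 2}(G)$ are proportional, which on the equal-slope line forces $\ch_{\le 2}(G)=0$. Finally (vii) follows at once: $\Delta(E)=0$ forces $\Delta(F)=\Delta(G)=0$ on any actual wall by (vi), and then by (iii) applied to $v$ together with the class of any destabilizer only the single vertical numerical wall can be actual.

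The hard part will be (iv): upgrading ``numerical wall'' to ``actual wall'' requires showing that the locus on $W$ where $E$ is strictly semistable with a fixed destabilizing subobject is both open and closed in $W$. Openness is a standard consequence of the openness of tilt-stability, but closedness is delicate: one needs a limiting argument that a family of tilt-subobjects $F_{(\alpha,\beta)}$ of $E$ specializes to a genuine subobject in $\Coh^\beta(X)$ at boundary points, and one must verify that the heart $\Coh^\beta(X)$ transforms compatibly as $\beta$ moves across $W$ and the torsion pair $(\TT_\beta,\FF_\beta)$ shifts.
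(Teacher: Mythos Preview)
The paper does not prove this proposition at all: it is quoted verbatim as \cite[Theorem 2.9]{Sch18} and used as a black box, with no argument given. There is therefore nothing in the paper to compare your proposal against.

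That said, your sketch is close to the standard proof in the literature (Maciocia, Schmidt). A few remarks on where it is loose. In (vi), writing $\ch_{\le 2}(F)=(r_F,c_F,d_F)$ etc.\ and expanding gives $\Delta(E)-\Delta(F)-\Delta(G)=2(c_Fc_G-r_Fd_G-r_Gd_F)$; this mixed term is not visibly nonnegative from the quadratic form alone, and the usual argument instead uses that $F,G$ are tilt-semistable with equal slope at a point on the wall, applies the Bogomolov inequality to each, and then exploits the positivity of $H^2\cdot\ch_1^\beta$ on both factors to conclude. Your equality characterization also needs care: equality in the inequality forces $\ch_{\le 2}(F)$ proportional to $\ch_{\le 2}(E)$, which combined with $0<H^2\cdot\ch_1^\beta(F)<H^2\cdot\ch_1^\beta(E)$ would define no wall, so in fact $\ch_{\le 2}(G)=0$. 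In (v), bounding the radii is not quite enough; one also needs that there are only finitely many walls above any fixed radius, which follows from discreteness of $\Lambda$ together with the constraint $0<H^2\cdot\ch_1^\beta(F)<H^2\cdot\ch_1^\beta(E)$ along the wall. For (iv), which you rightly flag as the hard part, the standard route is not an open-closed argument on the wall itself but rather the observation that along a fixed numerical wall the hearts $\Coh^\beta(X)$ for varying $\beta$ all contain the same objects of the relevant slope, so a destabilizing triple at one point remains a destabilizing triple at every point; this is cleaner than trying to specialize families of subobjects.
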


Throughout this section we will consider the case when
$$
X=X_2
$$ is a three-dimensional quadric. The following lemma can be verified by direct calculation.
\begin{lemma} Let $E\in\mathrm D^b(X)$ and $\ch(E)=(r,cH,dH^2,e)$.
	The equation $W_{\alpha,\beta}(E)=0$ is equivalent to the relation $\nu_{\alpha,\beta}(r,
	cH,dH^2)=\nu_{\alpha,\beta}(2c,4dH,3eH^2).$
	In particular, the equation $W_{\alpha,\beta}(E)=0$ describes a numerical wall in
	tilt stability.
\end{lemma}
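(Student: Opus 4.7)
The statement is a polynomial identity in $\alpha$ and $\beta$ after clearing denominators, so the plan is direct verification using the explicit formulas (2.3) for the $\beta$-twisted Chern character together with $H^3=2$ on the three-dimensional quadric $X_2$.

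First I will compute the two tilt-slopes explicitly. Applying (2.3) to the triple $(r,cH,dH^2)$ and using $H^3=2$, one finds $H^2\cdot\ch_1^\beta=2(c-\beta r)$, $H\cdot\ch_2^\beta=2(d-\beta c+\tfrac{\beta^2}{2}r)$, and $\tfrac{\alpha^2}{2}H^3\,\ch_0=\alpha^2 r$, so
\begin{equation*}
\nu_{\alpha,\beta}(r,cH,dH^2)=\frac{2d-2\beta c+\beta^2 r-\alpha^2 r}{2(c-\beta r)}.
\end{equation*}
Treating the entries of $(2c,4dH,3eH^2)$ formally as rank, first and second Chern-character components, the same recipe yields
\begin{equation*}
\nu_{\alpha,\beta}(2c,4dH,3eH^2)=\frac{3e-4\beta d+\beta^2 c-\alpha^2 c}{4d-2\beta c}.
\end{equation*}

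Next I will cross-multiply and gather terms. Writing the first slope as $(B_1-\alpha^2 r)/A_1$ and the second as $(B_2-2\alpha^2 c)/A_2$ with obvious $A_i,B_i$, the equality of slopes becomes
\begin{equation*}
(B_1A_2-B_2A_1)+\alpha^2(2cA_1-rA_2)=0.
\end{equation*}
The $\alpha^2$-coefficient telescopes via $2cA_1-rA_2=4c^2-4\beta cr-8rd+4\beta cr$ to $4(c^2-2rd)=4\Delta(E)$. Expanding $B_1A_2$ and $B_2A_1$ and grouping by powers of $\beta$, the $\alpha$-free part should collapse to $4\bigl[\beta^2(c^2-2rd)+(3re-2cd)\beta+4d^2-3ce\bigr]$. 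Dividing by the nonzero constant $4$, the cross-multiplied equation becomes precisely the formula for $W_{\alpha,\beta}(E)$ from Proposition~\ref{Prop 2.1}(ii), so the two loci in the upper half-plane coincide.

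For the ``in particular'' clause, it suffices to verify that $(2c,4dH,3eH^2)$ lifts to a well-defined class $w\in\Lambda=\mathbb{Z}^2\oplus\tfrac12\mathbb{Z}$: indeed $2c\in\mathbb{Z}$, $4d\in\mathbb{Z}$ since $d\in\tfrac12\mathbb{Z}$, and $3e\in\tfrac12\mathbb{Z}$ since $e\in\tfrac16\mathbb{Z}$. By the definition of a numerical wall in tilt stability, the locus $\nu_{\alpha,\beta}(v)=\nu_{\alpha,\beta}(w)$ with $v=\ch_{\le 2}(E)$ is then a numerical wall, and by the first part this wall is cut out by $W_{\alpha,\beta}(E)=0$. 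The only real obstacle is bookkeeping in the polynomial expansion, but no conceptual input is required beyond (2.3) and the expression for $W_{\alpha,\beta}$ already established in Proposition~\ref{Prop 2.1}(ii).
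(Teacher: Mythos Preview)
Your approach is correct and is precisely the direct calculation the paper alludes to (the paper gives no further detail beyond ``can be verified by direct calculation''). One small notational wobble: your displayed formula for the second slope has denominator $4d-2\beta c$ and $\alpha^2$-coefficient $-c$, but your subsequent $A_2$, $B_2$ and the telescoping $2cA_1-rA_2=4c^2-4\beta cr-8rd+4\beta cr$ only match if you use the \emph{unsimplified} numerator and denominator $A_2=8d-4\beta c$, $B_2=6e-8\beta d+2\beta^2 c$; with that convention all your identities check out and the cross-multiplied expression is exactly $4W_{\alpha,\beta}(E)$.
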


Let us denote the radius of the semicircular wall $W_{\alpha,\beta}(E)=0$ by $\rho_W(E)$, and
its center by $s_W(E)$. Center of the semicircular wall $\nu_{\alpha,\beta}(E)=
\nu_{\alpha, \beta}(F)$ will also be denoted by $s(E,F)$.

Below we will use the following results about tilt-semistable objects.
\begin{lemma}[{\cite[Lemma 7.2.1]{BMT}}]\label{big alpha} If the object $E\in
	\Coh^\beta(X)$ is $\nu_{\alpha,\beta}$-semistable for $\alpha\gg0$, then
	one of the following statements is true:\\
	(1) $\HH^{-1}(E)=0$ and $\HH^0(E)$ is a pure $\mu$-semistable sheaf of dimension
	$\ge0$,\\
	(2) $\HH^{-1}(E)=0$ and $\HH^0(E)$ is a sheaf of dimension $\le 1$,\\
	(3) $\HH^{-1}(E)$ is a $\mu$-semistable torsion-free sheaf and
	$\HH^0(E)$ is a sheaf of dimension $\le 1$.
\end{lemma}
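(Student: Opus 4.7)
The plan is to analyze $E$ via the canonical short exact sequence
$$
0\to\HH^{-1}(E)[1]\to E\to\HH^0(E)\to 0
$$
in the tilted heart $\Coh^\beta(X)$, which exists because $\HH^{-1}(E)\in\FF_\beta$ and $\HH^0(E)\in\TT_\beta$ by the very definition of the torsion pair. The main tool is the asymptotic expansion of the tilt-slope as $\alpha\to\infty$: whenever $H^2\cdot\ch_1^\beta(F)>0$, one has
$$
\nu_{\alpha,\beta}(F)\ =\ -\frac{\alpha^2}{2}\cdot\frac{H^3\cdot\ch_0^\beta(F)}{H^2\cdot\ch_1^\beta(F)}\ +\ O(1),
$$
so the sign of $\ch_0^\beta(F)$ governs whether $\nu_{\alpha,\beta}(F)\to+\infty$ or $-\infty$; and objects with $H^2\cdot\ch_1^\beta=0$ (such as sheaves supported in dimension $\le 1$) have $\nu_{\alpha,\beta}=+\infty$ by convention. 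The strategy is to extract destabilizing subobjects of $E$ whenever the three cases in the statement fail.

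First I would show that if $\HH^{-1}(E)\neq0$ then it is torsion-free and $\mu$-semistable. Torsion-freeness is immediate from the defining inequality $\mu(F)\le\beta$ in $\FF_\beta$ since any nonzero torsion subsheaf has slope $+\infty$. For $\mu$-semistability, suppose $F\hookrightarrow\HH^{-1}(E)$ with $\beta\ge\mu(F)>\mu(\HH^{-1}(E))$; then $F\in\FF_\beta$, so $F[1]\in\Coh^\beta(X)$ and $F[1]\hookrightarrow\HH^{-1}(E)[1]\hookrightarrow E$ is a subobject in the heart. A direct computation gives $H^2\cdot\ch_1^\beta(F[1])=\rk(F)\cdot H^3\cdot(\beta-\mu(F))\ge0$ and $\ch_0^\beta(F[1])=-\rk(F)<0$, so the leading $\alpha^2$-coefficient of $\nu_{\alpha,\beta}(F[1])$ is strictly positive and exceeds that of $\nu_{\alpha,\beta}(E)$ for $\alpha\gg0$, contradicting tilt-semistability; the boundary subcase $\mu(F)=\beta$ is handled by passing to the next-order term using the sign of $\ch_2^\beta(F)$.

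Next I would analyze the torsion subsheaf $T\subset\HH^0(E)$ together with the torsion-free quotient $\HH^0(E)/T$. Since $T\in\TT_\beta$, it is a subobject of $E$ in the heart. A torsion sheaf with support of dimension $2$ has $H^2\cdot\ch_1^\beta(T)>0$ and $\ch_0^\beta(T)=0$, so its tilt-slope stays bounded, and comparing $\nu_{\alpha,\beta}(T)$ with $\nu_{\alpha,\beta}(E)$ rules this out whenever $E$ has positive rank or whenever $\HH^{-1}(E)\neq0$; hence the torsion of $\HH^0(E)$ is supported in dimension $\le 1$. A parallel destabilizing argument applied to the torsion-free quotient $\HH^0(E)/T$ (using $\mu$-destabilizing subsheaves lifted to subobjects of $E$) then forces either $\HH^0(E)/T=0$ or $\HH^0(E)$ pure and $\mu$-semistable, assembling the three cases: $\HH^{-1}(E)=0$ with $\HH^0(E)$ pure $\mu$-semistable (case 1); $\HH^{-1}(E)=0$ with $\HH^0(E)$ of dimension $\le1$ (case 2); and $\HH^{-1}(E)\neq0$ with $\HH^0(E)$ of dimension $\le1$ (case 3).

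The main obstacle is separating case (3) from case (1): one must show that when $\HH^{-1}(E)\neq0$, the sheaf $\HH^0(E)$ cannot have any torsion-free part of positive rank. The cleanest route is to produce, from a hypothetical torsion-free quotient $\HH^0(E)\onto Q$ with $\rk(Q)>0$, a quotient $E\onto Q$ in $\Coh^\beta(X)$ whose tilt-slope asymptotics are incompatible with those of the subobject $\HH^{-1}(E)[1]\hookrightarrow E$, using additivity of $\ch_0^\beta$ and $H^2\cdot\ch_1^\beta$ across the defining sequence. The technical subtlety is that the leading $\alpha^2$-coefficients of the various pieces can be collinear in $(\ch_0,\ch_1)$, in which case one must descend to the $O(1)$ term controlled by $\ch_2^\beta$, and it is here that Proposition \ref{Prop 2.1}(ii) and the explicit formulas \eqref{c vers ch}--\eqref{c vers ch 3} do the real work.
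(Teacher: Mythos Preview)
The paper does not give its own proof of this lemma; it is quoted directly from \cite[Lemma 7.2.1]{BMT} without argument. Your outline is essentially the standard proof from that reference (and from \cite[Prop.~14.2]{Bri}): use the canonical triangle $\HH^{-1}(E)[1]\to E\to\HH^0(E)$ in $\Coh^\beta(X)$ and compare the $\alpha^2$-asymptotics of the tilt-slope on the two pieces. So the approach is correct and matches the cited source.

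Two small comments. First, when you pick a $\mu$-destabilizing subsheaf $F\subset\HH^{-1}(E)$, you need $\HH^{-1}(E)/F\in\FF_\beta$ in order for $F[1]$ to be a genuine subobject of $E$ in $\Coh^\beta(X)$; this is automatic if you take $F$ to be the maximal destabilizing subsheaf, but not for an arbitrary one. Second, your final paragraph overcomplicates the separation of case (3) from case (1). If $\HH^{-1}(E)\neq0$ and $\HH^0(E)$ has a positive-rank part, then the subobject $\HH^{-1}(E)[1]$ has $\ch_0<0$ and hence $\nu_{\alpha,\beta}\to+\infty$, while the quotient $\HH^0(E)$ has $\ch_0>0$ and $H^2\cdot\ch_1^\beta>0$ (since $\HH^0(E)\in\TT_\beta$), hence $\nu_{\alpha,\beta}\to-\infty$; this already contradicts semistability for $\alpha\gg0$, with no need to invoke Proposition \ref{Prop 2.1}(ii) or examine $O(1)$ terms.
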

\begin{proposition}[{\cite[Lem. 2.4]{MS18}}]
	\label{radius}
	Suppose that an object $E$ is tilt-semistable and is destabilized by either a subobject
	$F\hookrightarrow E$, or a quotient $E\onto F$ in $\Coh^\beta(X)$, inducing a
	non-empty semicircular wall $W$. Let us further assume that $\ch_0(F)>\ch_0(E)\ge0$.
	Then the radius $\rho_W$ of the wall $W$ satisfies the inequality
	$$
	\rho_W\le\frac{\Delta(E)}{4\ch_0(F)(\ch_0(F)-\ch_0(E))}.
	$$
\end{proposition}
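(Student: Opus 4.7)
The plan is to combine the Bogomolov--Gieseker type inequalities $\Delta(F),\Delta(Q)\ge 0$ supplied by Proposition~\ref{Prop 2.1}(ii) with the ``highest-point'' identity from Proposition 2.2(ii), which states that the hyperbola $\nu_{\alpha,\beta}(E)=0$ passes through the top $(\alpha,\beta)=(\rho_W,s_W)$ of every semicircular wall of $E$. The subobject and quotient cases are symmetric, so I will write the argument with $F\hookrightarrow E$ a subobject in $\Coh^{s_W}(X_2)$ and set $Q=E/F$; by the rank hypothesis, $r_F:=\ch_0(F)>r:=\ch_0(E)\ge 0$, so $\ch_0(Q)=r-r_F<0$. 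Since $E$ is strictly tilt-semistable on $W$, both $F$ and $Q$ appear as Jordan--H\"older factors and are tilt-semistable, so Proposition~\ref{Prop 2.1}(ii) applies to each.

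First, I would exploit the top-of-wall identity. Since $\nu_{\rho_W,s_W}$ agrees on $E$, $F$ and $Q$ and equals $0$ on $E$, the vanishing extends to all three, giving $H\cdot\ch_2^{s_W}(X)=\tfrac{\rho_W^2}{2}H^3\ch_0(X)$ for $X\in\{E,F,Q\}$. Writing $p_X:=H^2\ch_1^{s_W}(X)/H^3$ and using the $\beta$-invariance of $\Delta$, one obtains
\[
\Delta(F)=p_F^2-r_F^2\rho_W^2,\qquad \Delta(Q)=p_Q^2-(r_F-r)^2\rho_W^2,\qquad \Delta(E)=p_E^2-r^2\rho_W^2,
\]
together with the additivity relation $p_E=p_F+p_Q$.

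Next I would invoke the structural positivity $H^2\ch_1^{s_W}(X)\ge 0$ for every $X\in\Coh^{s_W}(X_2)$: by construction of the torsion pair, objects of $\TT_{s_W}$ have slope $>s_W$, while objects of $\FF_{s_W}[1]$ have reversed sign of $\ch_0$ but their underlying sheaves have slope $\le s_W$, so the tilted $\ch_1$ is non-negative in either case. Therefore $p_F,p_Q\ge 0$, and combined with $\Delta(F),\Delta(Q)\ge 0$ and $\rho_W>0$ this upgrades the square inequalities to the linear ones
\[
p_F\ge r_F\rho_W\quad\text{and}\quad p_Q\ge (r_F-r)\rho_W.
\]
Summing and squaring gives $p_E^2\ge(2r_F-r)^2\rho_W^2$; substituting $p_E^2=\Delta(E)+r^2\rho_W^2$ yields $\Delta(E)\ge\bigl[(2r_F-r)^2-r^2\bigr]\rho_W^2=4r_F(r_F-r)\rho_W^2$, which is the claimed bound.

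The main subtle point is the sign passage from $p_F^2\ge r_F^2\rho_W^2$ to $p_F\ge r_F\rho_W$ (and similarly for $Q$), which is where the hypothesis $r_F>r\ge 0$ enters essentially: it forces $|r_Q|=r_F-r>0$ so that the $Q$-inequality is non-degenerate, and it makes the coefficient $(2r_F-r)^2-r^2=4r_F(r_F-r)$ strictly positive, so the final rearrangement produces the stated upper bound on $\rho_W^2$.
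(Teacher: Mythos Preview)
The paper does not supply its own proof of this proposition --- it is quoted verbatim from \cite[Lem.~2.4]{MS18}. Your argument is correct and is essentially the standard proof of that lemma: evaluate $\nu_{\alpha,\beta}$ at the top point $(\rho_W,s_W)$ of the wall via Proposition~2.2(ii), rewrite the Bogomolov inequalities $\Delta(F),\Delta(Q)\ge 0$ as the linear bounds $p_F\ge r_F\rho_W$ and $p_Q\ge(r_F-r)\rho_W$ using the positivity of $H^2\ch_1^{s_W}$ on $\Coh^{s_W}(X)$, and then add. One minor imprecision: $F$ and $Q$ need not literally be Jordan--H\"older factors of $E$ on $W$; what you actually use, and what is true, is that they are tilt-semistable on $W$ because $E$ is semistable there and they share its slope.

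One point is worth flagging explicitly. Your computation in fact yields
\[
\rho_W^{\,2}\ \le\ \frac{\Delta(E)}{4\,\ch_0(F)\bigl(\ch_0(F)-\ch_0(E)\bigr)},
\]
with $\rho_W^{\,2}$ rather than $\rho_W$ on the left-hand side. This is the form of the inequality stated and proved in \cite{MS18}; the version printed here with $\rho_W$ appears to be a transcription slip. The two statements are not equivalent in general, so strictly speaking you have proved the correct lemma from the cited source rather than the displayed inequality.
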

Let us recall the construction of Bridgeland stability conditions on $X$. Let
\begin{equation*}
	\begin{split}
		& \TT'_{\alpha,\beta}=\{E\in\Coh^\beta(X)\ |\ \text{any quotient}\,E\onto
		G\;\text{satisfies}\;\nu_{\alpha,\beta}(G)>0\},\\
		& \mathcal F'_{\alpha,\beta}=\{E\in\Coh^\beta(X)\ |\ \text{any
			subobject}\ 0\ne F\into E\ \text{satisfies}\\
		&\nu_{\alpha,\beta}(F)\le 0\},
	\end{split}
\end{equation*}
and set $\AA^{\alpha,\beta}(X)=\langle \mathcal F'_{\alpha,\beta}[1],\mathcal
T'_{\alpha,\beta}\rangle$. For any $s>0$ we define
$$\lambda_{\alpha,\beta,s}=\frac{\ch^{\beta}_3 - s\alpha^2
	H^2\cdot\ch^{\beta}_1}{H\cdot\ch^{\beta}_2 - \frac{\alpha^2}{2}H^3\cdot
	\ch^{\beta}_0}.$$
An object $E\in\AA^{\alpha, \beta}(X)$ is called $\lambda_{\alpha, \beta,
s}$-(semi)stable if for any nontrivial subobject $F\into E$ we have
$\lambda_{\alpha,\beta,s}(F)<(\le)\lambda_{\alpha,\beta,s}(E)$.

$D^b(X)$ has a full strong exceptional collection $(\OO_X(-1),\SS(-1),$\\
$\OO_X,\OO_X(1))$, where $\SS$ is a spinor bundle on $X$.
\begin{proposition}[{\cite{Sch14}},{\cite[Thm. 6.1(2)]{Sch19}}] \label{heart}
	(i) Let $\alpha<\frac 13,\beta\in [-\frac 12.0],s=\frac 16.$ For any
	$\gamma\in\R$ we define a torsion pair
	\begin{equation*}
		\begin{split}
			& \TT''_{\gamma}=\{E\in\AA^{\alpha,\beta}(X)\ |\ \text{any quotient}\,E\onto G
			\,\text{satisfies}\,\lambda_{\alpha,\beta, s}(G)>\gamma \}, \\
			& \FF''_{\gamma} = \{E \in \mathcal A^{\alpha, \beta}(X)\ |\ \text{any
				subobject}\, 0\ne F\into E\,\text{satisfies}\, \\
			& \lambda_{\alpha,\beta, s}(F)\leq\gamma\}.
		\end{split}
	\end{equation*}
	There is a $\gamma\in\R$ such that
	$$\langle \mathcal T''_{\gamma},\mathcal F''_{\gamma}[1]\rangle =\mathfrak
	C:=\langle\OO_X(-1)[3],\SS(-1)[2],\OO_X[1],\OO_X(1)\rangle.$$
	(ii) Let $v$ be the Chern character of an object from $\mathrm D^b(X),$ and
	$\alpha_0>0, \beta_0\in\mathbb{R},$ and $s>0$
	such that $\nu_{\alpha_0,\beta_0}(v)=0,\ H^2\cdot
	v_1^{\beta_0}>0,$ and $\Delta(v)\ge 0$. Let us assume that all $\nu_{\alpha_0,
		\beta_0}$-semistable objects of class $v$ are $\nu_{\alpha_0,\beta_0
	}$-stable. Then there is a neighborhood $U$ of the point $(\alpha_0,\beta_0)$
	such that for all $(\alpha,\beta)\in U$ with $\nu_{\alpha,\beta}(v)
	>0$, an object $E\in\Coh^\beta(X)$ with $\ch(E)=v$ is $\nu_{\alpha,\beta}
	$-semistable if and only if it is $\lambda_{\alpha,\beta,s}
	$-semistable.
\end{proposition}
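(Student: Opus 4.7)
The plan is to address the two parts separately: part (i) is a structural statement about a double tilt of $\Coh(X)$, while part (ii) is a local comparison of $\nu_{\alpha,\beta}$-stability with $\lambda_{\alpha,\beta,s}$-stability near a zero of the tilt-slope.

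For part (i), the first step is to check that, on the specified range of $(\alpha,\beta)$, each object in the collection $(\OO_X(-1), \SS(-1), \OO_X, \OO_X(1))$ is $\nu_{\alpha,\beta}$-semistable. For the three line bundles this is immediate from part (vii) of the walls classification quoted above, since their discriminants vanish. For $\SS(-1)$, which is a rank-two bundle of slope $-1/2$, one checks $\Delta(\SS(-1))\ge0$ and invokes the radius bound of Proposition \ref{radius} to exclude destabilizing semicircular walls in the region $\alpha<1/3$, $\beta\in[-1/2,0]$. Using the formulas \eqref{c vers ch} I then compute $\nu_{\alpha,\beta}$ on each object, thereby determining whether it sits in $\TT'_{\alpha,\beta}$ or in $\FF'_{\alpha,\beta}$, hence its position in $\AA^{\alpha,\beta}(X)$ (up to the shifts already induced by the previous tilts).

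The main computational step is then to evaluate $\lambda_{\alpha,\beta,1/6}$ on each of the four objects, to show that the four resulting values are strictly ordered throughout the whole parameter region, and to exhibit a $\gamma\in\R$ separating the two objects intended to lie in $\TT''_\gamma$ from the two intended to lie in $\FF''_\gamma$, the latter then picking up the shift by one. Since $(\OO_X(-1),\SS(-1),\OO_X,\OO_X(1))$ is a full strong exceptional collection, the four appropriately shifted objects generate $\mathrm D^b(X)$, and each is $\lambda_{\alpha,\beta,1/6}$-stable by exceptionality; hence the tilted heart must coincide with the category $\mathfrak C$ generated by the prescribed shifts. I expect the hard part to be the correct choice $s=1/6$ and the verification that the four $\lambda$-slopes remain ordered uniformly on the whole region, which is where the specific geometry of the three-dimensional quadric $X_2$ enters and which is the core of Schmidt's original calculation.

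For part (ii), the strategy is purely local. First I would use part (iii) of the walls classification, together with the hypothesis that all $\nu_{\alpha_0,\beta_0}$-semistable objects of class $v$ are stable, to exhibit a neighborhood $U$ of $(\alpha_0,\beta_0)$ containing no actual walls for $v$: any such wall would have to accumulate at $(\alpha_0,\beta_0)$, contradicting the stability hypothesis. Next, under the assumptions $H^2\cdot v_1^{\beta_0}>0$ and $\nu_{\alpha,\beta}(v)>0$, a representative of class $v$ lies in $\TT'_{\alpha,\beta}\subset\AA^{\alpha,\beta}(X)$ in cohomological degree zero. Finally, I would match the two notions of stability on such representatives by comparing slopes directly: the denominator $H\cdot\ch_2^\beta-(\alpha^2/2)H^3\cdot\ch_0^\beta$ appearing in $\lambda_{\alpha,\beta,1/6}$ is positive throughout the region $\nu_{\alpha,\beta}(v)>0$, so a destabilizing subobject for one slope function is also destabilizing for the other, and conversely. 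The principal obstacle will be controlling the signs of the denominators of both slope functions uniformly over $U$ so that the two slope orderings correspond, which is precisely what forces the restriction to the half $\nu_{\alpha,\beta}(v)>0$ of the neighborhood.
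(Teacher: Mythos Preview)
The paper does not give its own proof of this proposition: it is stated as a quotation of results from \cite{Sch14} (for part (i)) and \cite[Thm.~6.1(2)]{Sch19} (for part (ii)), with no proof environment following the statement. There is therefore nothing in the present paper to compare your proposal against; the authors simply import these results as black boxes and use them in the subsequent lemmas of Section~3.

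That said, your sketch for part (i) is in the right spirit and roughly follows Schmidt's original argument in \cite{Sch14}. Your sketch for part (ii), however, has a gap: the claim that ``a destabilizing subobject for one slope function is also destabilizing for the other'' is not immediate, because the subobjects relevant to $\nu_{\alpha,\beta}$-stability live in $\Coh^\beta(X)$ while those for $\lambda_{\alpha,\beta,s}$-stability live in $\AA^{\alpha,\beta}(X)$, and these are different hearts. The actual argument in \cite{Sch19} requires showing that, near $(\alpha_0,\beta_0)$ and under the strict-stability hypothesis, the relevant Harder--Narasimhan filtrations in the two hearts can be matched, which is more delicate than a sign comparison of denominators. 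If you intend to supply a self-contained proof rather than a citation, you would need to address this point.
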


\begin{remark}\label{e le d(d+1)}
From the proof of \cite[Prop. 3.2]{MS18} it follows that for a 
tilt-semistable object $E\in\Coh^\beta(X)$ with $\ch(E)=(1,0,-dH^2,e 
)$ the inequality $e\le d(d+1)$ holds.
\end{remark}

In what follows we will use the equalities
\begin{equation}\label{v(S)}
	\ch(\OO_X(n))=(1,nH,\frac {n^2}2H^2,\frac{n^3}3[\mathrm{pt}]),\ \ \ \
	\ch(\SS(-1))=(2,-H,0,\frac16[\mathrm{pt}]),
\end{equation}
the exact triple of sheaves on $X$ (\cite{Sch14})
\begin{equation}\label{spin tr}
	0\to\SS(-1)\to V\otimes\OO_X\xrightarrow{\varepsilon}\SS\to 0,\ \ \ \ \
	\ \ \ \ \ V\cong\bk^4,
\end{equation}
and the following properties of the spinor bundle $\SS$ on $X$. In the triple \eqref{spin tr} the epimorphism $\varepsilon$ gives an isomorphism in sections
$h^0(\varepsilon):\ V\xrightarrow{\cong}H^0(\SS)$. Let $B$ be the base of the
family of lines on $X$. Everywhere below we will
identify $B$ with $\P^3$ through the isomorphism $f:\ \P^3=\P(V)
\xrightarrow{\cong}B,\ \bk v\mapsto l$, where line $l$ is the scheme of
zeros of the section $h^0(\varepsilon)(v)$. Let $\P^4=\langle X\rangle$ be
the projective hull of the quadric $X$, $\check{\P}^4=|\OO_X(1)|$ and
let $\mathbb{S}\subset\check{\P}^4\times X$ be the universal family of
two-dimensional quadrics (hyperplane sections) on $X$. Let $\mathbb{G}:=Gr(
2,V)$ be the grassmannian of lines in $\P^3$, and let $l_x$ be the line in $\P^3$
corresponding to a point $x\in\mathbb{G}$. Then $f(l_x)$ is a series
of lines on the two-dimensional quadric $S_x\in\check{\P}^4$. It is not difficult to
see that $\mu:\ \mathbb{G}\to\check{\P}^4,\ x\mapsto S_x$ is a double
covering ramified in the dual quadric $\check{X}\subset
\check{\P}^4$. The covering $\mu:\ \mathbb{G}\to\check{\P}^4$ defines
on $X$ a family of two-dimensional quadrics $\widetilde{\mathbb{S}}:=\mathbb{G} \times_{\check{\P}^4}\mathbb{S}\subset\mathbb{G}\times X$ with the base $\mathbb{G}$.
The tautological subbundle $g:\KK\to V\otimes\OO_{\mathbb{G}}$ of
rank two on $\mathbb{G}$ and the epimorphism $\varepsilon$ in \eqref{spin tr}
determine the composition
$e:\ \KK\boxtimes\OO_X\xrightarrow{g\boxtimes\mathrm{id}_X}V\otimes\OO
_{\mathbb{G}}\boxtimes\OO_X\xrightarrow{\mathrm{id}_{\mathbb{G}}
	\boxtimes\varepsilon}\OO_{\mathbb{G}}\boxtimes\SS$, and let $\mathbb{I}
:=\im(e)\otimes\OO_{\mathbb{G}}\boxtimes\OO_X(-1)$. It is not hard to see that
$\ker(e)\cong\OO_{\mathbb{G}}(-1)\boxtimes\SS(-1)$. Thus, we have the following
exact triple on $X\times\mathbb{G}$:
\begin{equation}\label{univ on XxG}
	0\to\OO_{\mathbb{G}}(-1)\boxtimes\SS(-1)\to\KK\to\mathbb{I}\otimes
	\OO_{\mathbb{G}}\boxtimes\OO_X(1)\to0.
\end{equation}
For an arbitrary point $x\in\mathbb{G}$ the restriction of this triple to
$\{x\}\times X$ gives an exact triple:
\begin{equation}\label{spin tr on X}
	0\to\SS(-1)\to\OO^{\oplus2}_X\to\II(1)\to0,\ \ \ \ \ \ \ \ \ \
	\II:=\II_{\P^1,S}.
\end{equation}
Here $\II_{\P^1,S}$ is the ideal sheaf of an arbitrary line $\P^1\subset S$
from the series of lines defined by the point $x$ on a two-dimensional quadric $S=S_x$.

\vspace{5mm}

%\section{Semistable objects of rank two in XXXXXX with maximal third Chern class YYYYY}

\section{Semistable objects of rank two in $D^b(X_2)$ with maximal third Chern class $c_3$}

\vspace{5mm}

In this section we prove one of the main results of the paper --- the Theorem
\ref{Theorem 3.1} on the description of rank 2 semistable objects from $D^b(X_2)$, including
sheaves, on the three-dimensional quadric $$X=X_2,$$ having the maximal third Chern class
$c_3$. Everywhere below, $Q_2$ denotes a two-dimensional quadric — a hyperplane section
of $X$, and $\P^1$ denotes a projective line on the surface $Q_2$.

\begin{theorem}\label{Theorem 3.1} Let $E\in\Coh^\beta(X)$ be a
	tilt-semistable object with $\ch(E)=(2,cH,dH^2,e)$, $d\in\frac{1}{2}\Z$, $e\in\frac{1} {6}\Z$.\\
	(1) If $c=-1$, then $d\le 0$. \\
	(1.1) If $d=0$, then $e\le \frac 16$. In the case $e=\frac16$ we have $E\cong\SS(-1)$.\\
	(1.2) If $d=-\frac 12$, then $e\le\frac 53$. In case of equality $E$ is
	destabilized by an exact triple $0\to\OO_X(-1)^{\oplus
		3}\to E\to\OO_X(-2)[1]\to 0.$\\
	(1.3) If $d\le -1$ is an integer, then $e\le d^2-2d+\frac 16$. In case of equality
	$E$ is destabilized by an exact triple $0\to\OO_X(-1)^{\oplus 2}\to E\to
	\II_{\P^1,Q_2}(d)\to 0.$\\
	(1.4) If $d\le-\frac 32$ is non-integer, then $e\le d^2-2d+\frac {5}{12}$. In
	in case of equality $E$ is destabilized by an exact triple
	$0\to\OO_X(-1)^{\oplus 2}\to E\to\OO_{Q_2}(d-\frac 12)\to 0.$\\
(2) If $c=0$, then $d\le 0$.\\
(2.1) If $d=0$, then $e\le 0$. In case of equality $E\cong\OO_X^{\oplus 2}$.\\
(2.2) If $d=-\frac 12$, then $e\le -\frac 12$. \\
(2.3) If $d=-1$, then $e\le 1$. In the case $e=1$, the sheaf $E$ is destabilized by an exact triple $0\to
\OO_X(-1)^{\oplus 6}\to E\to \SS(-2)^{\oplus 2}[1]\to0.$\\
(2.4) If $d\le-\frac 32$ is non-integer, then $e\le d^2+\frac 14$.
In case of equality, $E$ is destabilized by an exact triple
$0\to \SS(-1)\to E\to\OO_{Q_2}(d+\frac 12)\to 0.$\\
(2.5) If $d=-2$, then $e\le 4$. In the case $e=4$, the sheaf $E$ is destabilized by one of
exact triples $0\to\SS(-1)\to E\to \II_{\P^1,Q_2}(-1)\to 0,\ \ \ 0\to\II_{\P^1,Q_2}(-1)\to E\to\SS(-1)\to 0,\ \ \
0\to\OO_X(-1)^{\oplus 4}\to E\to\OO(-2)^{\oplus 2}[1]\to0$.\\
(2.6) If $d\le -3$ is an integer, then $e\le d^2$. In case of equality $E$
destabilized by an exact triple $0\to \SS(-1)\to E\to\II_{\P^1,Q_2}(d+1)\to 0.$\\
(3) Let $E\in\Coh(X)$ be a semistable sheaf of rank 2 with $c_1(E)=-H$,
$c_2(E)=c_2[l]$, $c_3(E)=c_3[\mathrm{pt}]$. Then $c_2\ge0$ and \\
(3.1) if $c_2$ is even, then
\begin{equation}\label{c1=-1,c2 even}
	c_3\le\frac12c_2^2;
\end{equation}
(3.2) if $c_2$ is odd, then
\begin{equation}\label{c1=-1,c2 odd}
	c_3\le\frac12(c_2^2-1).
\end{equation}
(4) Let $E\in\Coh(X)$ be a semistable sheaf of rank 2 with $c_1(E)=0$, $c_2(E)=
c_2[l]$, $c_3(E)=c_3[\mathrm{pt}]$. Then $c_2\ge0$ and\\
(4.1) if $c_2$ is even, then
\begin{equation}\label{c1=-1,c2 even}
	c_3\le\frac 12c_2^2;
\end{equation}
(4.2) if $c_2$ is odd, then
\begin{equation}\label{c1=-1,c2 odd}
	c_3\le\frac 12(c_2^2+1).
\end{equation}
\end{theorem}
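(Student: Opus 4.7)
The plan splits into three phases. First, for parts (1) and (2), the bound $d\le 0$ is immediate from the Bogomolov inequality $\Delta(E)=c^2-2rd\ge 0$ of Proposition \ref{Prop 2.1}(ii) applied with $r=2$: this gives $d\le 1/4$ when $c=-1$ and $d\le 0$ when $c=0$, and the integrality constraint $d\in\tfrac12\Z$ then forces $d\le 0$ in both cases.

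The heart of the argument lies in Phase 2: for each sub-case of (1) and (2), I would produce the upper bound on $e$ together with the destabilizing triple. The lower bound comes from an explicit construction; for instance, in case (1.3) the extension $0\to\OO_X(-1)^{\oplus 2}\to E_0\to\II_{\P^1,Q_2}(d)\to 0$ has $\ch_3(E_0)=d^2-2d+\tfrac16$ by \eqref{v(S)} and \eqref{spin tr on X}, while tilt-semistability of $E_0$ follows from Proposition \ref{radius} applied to rule out destabilizing subobjects of larger rank. For the matching upper bound, I would invoke the stronger inequality $W_{\alpha,\beta}(E)\ge 0$ of Proposition \ref{Prop 2.1}(ii) evaluated on the hyperbola $\nu_{\alpha,\beta}=0$, which by \cite[Thm.~2.9]{Sch18} crosses every semicircular wall at its apex. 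On that hyperbola, Proposition \ref{heart}(ii) upgrades tilt-semistability to $\lambda_{\alpha,\beta,s}$-semistability, placing $E$ (up to a shift) in the Bridgeland heart $\mathfrak C=\langle\OO_X(-1)[3],\SS(-1)[2],\OO_X[1],\OO_X(1)\rangle$. Writing $E$ as an iterated extension of those four exceptional generators with multiplicities $a_0,a_1,a_2,a_3\in\Z_{\ge 0}$ and matching $\ch(E)$ term by term through \eqref{v(S)} reduces the problem to a small linear program whose only free parameter is $a_3$; non-negativity of the $a_i$, together with Remark \ref{e le d(d+1)} when the quotient is a twisted ideal sheaf, yields the stated inequalities, with equality iff the extension realizes precisely the listed destabilizer. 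Case (2.5), in which $e=4$ is attained by three geometrically distinct triples, is handled via the same wall-structure result: the three numerical walls coincide, so they define a single actual wall.

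For parts (3) and (4), I would begin by observing that any $H$-Gieseker-semistable rank-$2$ sheaf $E$ with $c_1\in\{-1,0\}$ is automatically $\mu$-semistable, by the standard rank-$2$ argument: a saturated rank-$1$ subsheaf with strictly larger slope would violate the Gieseker comparison of normalized Hilbert polynomials. Hence Proposition \ref{Prop 2.1}(i) makes $E$ tilt-semistable for $\alpha\gg 0$ and any $\beta<\mu(E)$, the Bogomolov inequality yields $c_2\ge 0$, and the dictionary \eqref{c vers ch 2}--\eqref{c vers ch 3} translates the $e$-bounds of parts (1)--(2) into the stated bounds on $c_3$, with the parity of $c_2=c_1^2-2d$ matching exactly the split between $d\in\Z$ and $d\in\tfrac12+\Z$.

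The main obstacle is the upper-bound half of Phase 2: identifying, in each sub-case, the admissible $(\alpha_0,\beta_0)$ where both tilt-semistability of $E$ persists and the Bridgeland heart description becomes available, and then ruling out larger semicircular walls coming from spinor-bundle twists. I expect case (2.5) to be the most delicate: one must both confirm the coincidence of three numerical walls and verify that any tilt-semistable object achieving $e=4$ decomposes in one of the three stated ways, which likely requires combining Lemma \ref{big alpha} on the possible cohomology sheaves $\HH^{-1}(E),\HH^0(E)$ with a direct $\Hom$-computation against each candidate destabilizer.
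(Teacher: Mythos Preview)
Your Phase~1 and Phase~3 are correct and match the paper. The gap is in Phase~2: your strategy of landing $E$ in the quiver heart $\mathfrak C$ and solving a linear program in the multiplicities $a_i$ cannot be carried out uniformly in $d$.

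Proposition~\ref{heart}(i) only identifies the double-tilted heart with $\mathfrak C$ in the narrow window $\alpha<\tfrac13$, $\beta\in[-\tfrac12,0]$; for $\ch(E)=(2,cH,dH^2,e)$ with $|d|$ large, no twist of $E$ brings the left branch of the hyperbola $\nu_{\alpha,\beta}=0$ into that window. Proposition~\ref{heart}(ii) does not help either: it only asserts an equivalence between tilt- and $\lambda$-semistability near a point $(\alpha_0,\beta_0)$, and it carries the hypothesis that \emph{every} $\nu_{\alpha_0,\beta_0}$-semistable object of class $v$ is $\nu_{\alpha_0,\beta_0}$-\emph{stable}, which you cannot verify without already knowing the wall structure you are trying to determine. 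Finally, the inequality $W_{\alpha,\beta}(E)\ge0$ is only available at points where $E$ is still tilt-semistable; by itself it does not produce the bound on $e$, it only tells you that if the bound is violated then a wall must exist---and then the wall, not $\mathfrak C$, must be analyzed.

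The paper does use $\mathfrak C$ exactly as you describe, but only for the base cases with small $|d|$ (Lemmas~\ref{trivial}--\ref{2,-1,-2}), each handled individually after an appropriate twist. The general argument is an \emph{induction on $\Delta(E)$}. The inductive step is a radius estimate (Lemma~\ref{our bound}): assuming $e$ exceeds the claimed bound, one computes $\rho_W(E)/\Delta(E)>\tfrac1{12}$, and then Proposition~\ref{radius} forces every destabilizing subobject or quotient along the wall to have rank $\le2$. A separate argument (Lemma~\ref{rank one}) reduces the rank-one case to the rank-two case, and the rank-two case goes through by induction: a rank-two destabilizer $F$ has $\Delta(F)<\Delta(E)$, so the induction hypothesis bounds $\ch_3(F)$ and $\ch_3(E/F)$, and one maximizes their sum over the allowed range of $\ch_2(F)$. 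You already cite Proposition~\ref{radius}, but only on the construction side; it is the missing engine of the upper bound.
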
   

\begin{remark}\label{Remark 3.2}
	In statements (1.1)-(1.4), (2.1), (2.3)-(2.6) of the Theorem, the maximal 
	value $c_{3\mathrm{max}}(E)$ is non-negative. Statement (2.2) gives
	the only case when $c_{3\mathrm{max}}(E)=-1$ is negative.
	In this case, it is not known whether an object $E$ with Chern classes
	$c_1(E)=0,\ c_2(E)=1,\ c_3(E)=c_{3\mathrm{max}}(E)=-1$ is a Gieseker-(semi)stable
	sheaf.
\end{remark}

We will preface the proof of the Theorem with a series of auxiliary statements
--- Lemmas 3.1-3.12.

\begin{lemma}\label{trivial} Let $E\in\Coh^\beta(X)$ be a
	tilt-semistable object with $\ch(E)=(2,0,0,e)$. Then $e\le 0$, and if
	$e=0$, then $E\cong \OO_X^{\oplus 2}$.
\end{lemma}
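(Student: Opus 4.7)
The key observation is that $\ch(E) = (2,0,0,e)$ gives $\Delta(E) = c^2 - 2rd = 0$. First I invoke \cite[Thm.~2.9(vii)]{Sch18}: $E$ is tilt-semistable on the entire upper half-plane off the unique vertical wall $\beta = 0$, and in particular for $\alpha \gg 0$. Applying Lemma~\ref{big alpha}, its cases (2) and (3) are incompatible with $\ch_0(E) = 2 > 0$, so case (1) holds and $E = \HH^0(E)$ is a pure $\mu$-semistable coherent sheaf of rank $2$, necessarily torsion-free with $\mu(E) = 0$. For every $\beta < 0$ one has $E \in \TT_\beta \subset \Coh^\beta(X)$ and $E$ remains tilt-semistable there, so the Bogomolov-Gieseker inequality of Proposition~\ref{Prop 2.1}(ii), with $r = 2$ and $c = d = 0$ substituted, collapses to
\[
W_{\alpha,\beta}(E) \;=\; (\alpha^2 + \beta^2)(c^2 - 2rd) + (3re - 2cd)\beta + 4d^2 - 3ce \;=\; 6\,e\,\beta \;\ge\; 0,
\]
and choosing $\beta < 0$ yields $e \le 0$, establishing the first claim.

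For the equality case $e = 0$, so $c_1 = c_2 = c_3 = 0$, I pass to the reflexive hull $E^{**}$. Its $\ch_{\le 2}$ coincides with that of $E$, so $\Delta(E^{**}) = 0$, and $\ch_3(E^{**}) = \ch_3(E) + \lt(E^{**}/E) \ge 0$. To rerun the preceding argument for $E^{**}$ I first verify that $E^{**}$ is $2$-semistable: any rank-$1$ $\mu$-equal subsheaf has the form $\II_Z \subset E^{**}$, and the vanishing $c_2(E^{**}) = 0$ combined with the nonnegative contributions of the $1$-dimensional parts of $Z$ and of the complementary quotient $\II_W$ forces both $Z,W$ to be $0$-dimensional, whence the top two terms of the Hilbert polynomials of $\II_Z$ and $\II_W$ coincide. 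By Proposition~\ref{Prop 2.1}(i) and \cite[Thm.~2.9(vii)]{Sch18}, $E^{**}$ is therefore tilt-semistable off the vertical wall, and the same $W$-inequality applied to $E^{**}$ yields $\ch_3(E^{**}) \le 0$. Hartshorne's theorem \cite{H} for rank-$2$ reflexive sheaves on a smooth threefold gives the opposite inequality $c_3(E^{**}) \ge 0$, so $c_3(E^{**}) = 0$ and $E^{**}$ is locally free. Then the $0$-dimensional quotient $E^{**}/E$ has length $\tfrac{1}{2}(c_3(E^{**}) - c_3(E)) = 0$, so $E = E^{**}$.

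It remains to identify $E$ with $\OO_X^{\oplus 2}$. Being a $\mu$-semistable rank-$2$ vector bundle on $X_2$ with $c_1 = c_2 = 0$ and hence $\Delta(E) = 0$, $E$ is projectively flat by Bogomolov's equality case. Since the quadric $X_2$ is simply connected, $\mathbb{P}(E)$ is a trivial $\mathbb{P}^1$-bundle, so $E \cong L^{\oplus 2}$ for some line bundle $L$ with $L^{\otimes 2} = \det E = \OO_X$, and torsion-freeness of $\mathrm{Pic}(X_2) = \Z \cdot H$ forces $L \cong \OO_X$; hence $E \cong \OO_X^{\oplus 2}$. The most delicate step is the $2$-semistability check for $E^{**}$, which is needed to feed the reflexive hull back into the tilt-stability machinery; everything else is a direct deployment of Hartshorne's local freeness criterion and the numerical wall analysis already recalled in the paper.
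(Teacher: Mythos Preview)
Your proof is correct and takes a different route from the paper for the equality case $e=0$. Both arguments obtain $e\le 0$ identically via $W_{\alpha,\beta}(E)=6e\beta\ge 0$ with $\beta<0$. For $e=0$, the paper invokes Proposition~\ref{heart}, placing a shift of $E$ in the heart $\mathfrak{C}=\langle\OO_X(-1)[3],\SS(-1)[2],\OO_X[1],\OO_X(1)\rangle$ and reading off $E\cong\OO_X^{\oplus2}$ from the exceptional collection as in \cite[Prop.~4.1]{Sch19}. You instead stay in classical sheaf theory: pass to $E^{**}$, rerun the $W$-inequality to force $c_3(E^{**})=0$, apply Hartshorne's criterion to get local freeness and $E=E^{**}$, then finish via projective flatness on the simply connected $X_2$. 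Your approach is more elementary and avoids the nontrivial Proposition~\ref{heart}, but the paper's method is the template reused throughout \S3 for objects that are genuinely not sheaves. Two small imprecisions to patch: the equality $\ch_{\le 2}(E^{**})=\ch_{\le 2}(E)$ needs Bogomolov for the $\mu$-semistable $E^{**}$ to bound $\ch_2(E^{**})\le 0$, since a priori $E^{**}/E$ could be $1$-dimensional; and ``projectively flat by Bogomolov's equality case'' strictly requires $\mu$-polystability, but this is harmless here since a $\mu$-stable such $E$ would contradict the conclusion, forcing $E$ to be an extension of $\OO_X$ by $\OO_X$, which splits as $H^1(\OO_{X_2})=0$.
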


\begin{proof} Since $E\in\Coh^\beta(X)$, we get $\beta< 0$. We have \mbox{$
		W_{\alpha,\beta}(E)=6e\beta\ge 0$}, so $e\le 0$. If $e=0$, then,
	following the arguments from the proof of the statement \cite[Prop. 4.1]{Sch19} and
	using Proposition \ref{heart}, we find $E\cong\mathcal O_X^{\oplus
		2}$.\end{proof}

\begin{lemma}\label{spinor}Let $E\in\Coh^\beta(X)$ be a tilt-semistable object with $\ch(E)=(2,-H,0,e)$. Then $e\le\frac 16$, and if $e=\frac 16$, then $E\cong \SS(-1)$.
\end{lemma}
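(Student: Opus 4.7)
The Chern data $\ch(E)=(2,-H,0,e)$ give $r=2$, $c=-1$, $d=0$, hence $\mu(E)=-\tfrac12$, $\Delta(E)=c^2-2rd=1$, and the second inequality of Proposition~\ref{Prop 2.1}(ii) becomes
\begin{equation*}
W_{\alpha,\beta}(E)=\alpha^2+\beta^2+6e\beta+3e\ge 0
\end{equation*}
at every $(\alpha,\beta)$ where $E$ is $\nu_{\alpha,\beta}$-semistable. By \eqref{v(S)} the class $(2,-H,0,\tfrac16)$ coincides with $\ch(\SS(-1))$, so the target sharp bound is realised by the spinor bundle itself.

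To establish $e\le\tfrac16$, the plan is to exploit the wall-and-chamber structure of Proposition~2.2 for the class $v=(r,cH,dH^2)=(2,-H,0)$. The critical curve $\nu_{\alpha,\beta}(\ch E)=0$ reduces to $\alpha^2=\beta(\beta+1)$, whose branch in $\beta\le-1$ has apex $(0,-1)$, and by Proposition~2.2(ii) every semicircular numerical wall for $v$ passes through this apex. Substituting $(\alpha,\beta)=(0,-1)$ into $W\ge 0$ already yields the coarse bound $1-3e\ge 0$, i.e.\ $e\le\tfrac13$. The sharpening to $e\le\tfrac16$ I would obtain by enumerating the admissible destabilizing triples $0\to F\to E\to G\to 0$ in $\Coh^\beta(X)$: Proposition~\ref{radius} discards destabilizers with $\ch_0(F)\ge 3$ (their walls have radius at most $1/\sqrt{12}$), and the remaining cases $\ch_0(F)\in\{1,2\}$ are excluded one at a time using the discriminant inequality of Proposition~2.2(vi), the rank-one bound of Remark~\ref{e le d(d+1)}, and direct comparison of $\ch_3$ against \eqref{v(S)} and the spinor triple \eqref{spin tr}.

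For the uniqueness at the equality case $e=\tfrac16$ the plan is to invoke Proposition~\ref{heart}(ii): choose $(\alpha_0,\beta_0)$ on the hyperbola $\nu_{\alpha_0,\beta_0}(v)=0$, arranged after the shift $E\mapsto E[1]$ so that $\beta_0\in(-\tfrac12,0)$ and $H^2\cdot v_1^{\beta_0}>0$, together with $\alpha_0<\tfrac13$. The remaining hypotheses $\Delta(v)=1>0$ and simple-stability of the class $v$ at $(\alpha_0,\beta_0)$ follow from the bound just established. The proposition then identifies $\nu_{\alpha_0,\beta_0}$-semistability of $E$ with $\lambda_{\alpha,\beta,s}$-semistability in $\AA^{\alpha,\beta}(X)$, and Proposition~\ref{heart}(i) places the even shift $E[2]$ in the finite-length heart $\mathfrak C=\langle\OO_X(-1)[3],\SS(-1)[2],\OO_X[1],\OO_X(1)\rangle$ as an iterated extension of the simples with nonnegative Jordan--H\"older multiplicities $(a_1,a_2,a_3,a_4)$. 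Matching Chern characters against \eqref{v(S)} gives the linear system $a_4=a_1$, $a_2=2a_1+1$, $a_3=4a_1$, $e=a_1+\tfrac16$, whose unique nonnegative solution at $e=\tfrac16$ is $(a_1,a_2,a_3,a_4)=(0,1,0,0)$. Hence $E[2]\cong\SS(-1)[2]$ and therefore $E\cong\SS(-1)$.

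The main obstacle is the step from the coarse bound $e\le\tfrac13$, given directly by $W\ge 0$ at the apex, to the sharp bound $e\le\tfrac16$: the Bogomolov-type inequality is inconclusive in the intermediate range $\tfrac16<e\le\tfrac13$, and the improvement requires the destabilizer enumeration above, which is where the bulk of the case-by-case technical work lies. The Bridgeland-stability machinery of Proposition~\ref{heart} then handles the uniqueness cleanly by reducing to a finite combinatorial matching of Chern characters inside $\mathfrak C$.
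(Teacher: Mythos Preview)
There are two genuine gaps.

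First, your plan for the sharp bound $e\le\tfrac16$ does not work. Enumerating destabilizing triples and excluding them shows at most that there are no semicircular walls for the class $(2,-H,0)$---and indeed the paper observes this directly, since $H^2\cdot\ch_1^{-1}(E)=2$ is already the minimal positive value, so no subobject with $0<H^2\cdot\ch_1^{-1}(F)<2$ can exist. But ``no walls'' only means $E$ remains tilt-semistable throughout, and then $W_{\alpha,\beta}(E)\ge 0$ still gives only $e\le\tfrac13$: for $\tfrac16<e<\tfrac13$ the limiting form $\beta^2+6e\beta+3e$ has negative discriminant, hence $W>0$ everywhere and no contradiction arises. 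The sharp bound cannot come from wall analysis or Bogomolov-type inequalities alone.

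Second, your application of Proposition~\ref{heart} is misdirected. Shifting $E\mapsto E[1]$ does not move the hyperbola $\nu_{\alpha,\beta}(v)=0$, because $\nu$ is invariant under shift. That hyperbola is $\alpha^2=\beta(\beta+1)$, which has no points with $\alpha>0$ and $\beta\in[-\tfrac12,0]$, so Proposition~\ref{heart}(i) cannot be invoked for $E$ or any shift of it. The paper instead \emph{twists} to $E(1)$, whose hyperbola $\alpha^2=\beta(\beta-1)$ does meet this strip (e.g.\ at $(\tfrac14,\tfrac{2-\sqrt5}{4})$). Then $E(1)$ or $E(1)[1]$ lies in $\mathfrak C$, and the multiplicity system---now for $\ch(E(1))=(2,H,0,e-\tfrac13)$---reads $a=d=e-\tfrac16$, $b=2e-\tfrac43$, $c=4e-\tfrac{14}3$; simultaneous nonpositivity forces $e\le\tfrac16$, while nonnegativity would force $e\ge\tfrac76$, excluded by the coarse bound. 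So both the sharp bound and the uniqueness come from this single step: at $e=\tfrac16$ one finds $(a,b,c,d)=(0,-1,-4,0)$, yielding an exact triple $0\to\SS(-2)\to\OO_X(-1)^{\oplus4}\to E\to 0$, and a $\GL(4)$-orbit argument together with \eqref{spin tr} identifies $E\cong\SS(-1)$. Your linear system, computed for the untwisted $E$, gives $e=a_1+\tfrac16$ with $a_1\ge 0$, which runs in the wrong direction.
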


\begin{proof} Since $E\in\Coh^\beta(X)$, then $\beta<-\frac 12$. Hyperbola
	$\nu_{\alpha,\beta}(E)=0$ intersects the $\beta$-axis at the point $\beta=-1$, so
	if there is an actual semicircular wall, it must intersect the line
	$\beta=-1$. In our case, $H^2\cdot\ch_1^{-1}(E)$ has the smallest
	positive value, so if $E$ is $\nu_{\alpha,-1}$-semistable, it
	must be $\nu_{\alpha,-1}$-semistable for any $\alpha$ \cite[Lem.
	3.5]{Sch19}. Therefore, for $E$ there are no other walls except the unique one
	vertical wall.

Note that $\nu_{\frac 14,\frac{2-\sqrt{5}}{4}}(E(1))=0$, therefore,
applying Proposition \ref{heart} to $E(1)$, we obtain that $E(1)$ or $E(1)[1]$
belong to the category $\mathfrak
C$. Solving the equation $\ch(E(1))=a\ch(\OO_X(-1)[3])+b\ch(\SS(-1)[2])+c\ch(\OO_X[1 ])+d\ch(\OO_X(1))$, where
$a,b,c,d\in\mathbb Z$ are simultaneously non-negative or non-positive,
we obtain the required bound for $\ch_3(E)$,
and for its maximum value we find $a=d=0,b=-1,c=-4$. We get an exact triple
$0\to \SS(-2)\to\OO_X(-1)^{\oplus 4}\to E\to 0.$

A calculation shows that the equation $\nu_{\alpha,\beta}(E)=
\nu_{\alpha,\beta}(\OO_X(-1))$ does not hold for any $(\alpha,\beta)$, and
$\nu_{\alpha,\beta}(E)>\nu_{\alpha,\beta}(\OO_X(-1))$. Therefore there is no morphism
$E\onto\OO_X(-1)$. Since $\Hom(\SS(-2),\OO_X(-1))\cong\C^4$, it follows that
any two injective morphisms $\SS(-2)\to\OO_X(-1)^{ \oplus 4}$ with
tilt-semistable cokernel $E$ lie in the same orbit of the $GL(4)$-action. So
there is a unique object $E$ up to isomorphism, and the exact triple \eqref{spin tr} shows that $E\cong \SS(-1)$.
\end{proof}

\begin{lemma}\label{2,0,-1} Let $E\in\Coh^\beta(X)$ be a tilt-semistable
	object with $\ch(E)=(2,0,-\frac 12H^2,e)$. Then $e\le -\frac 12$.
\end{lemma}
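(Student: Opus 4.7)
The plan is to combine the Bogomolov-type inequality of Proposition \ref{Prop 2.1}(ii) with a Chern-character decomposition in the heart $\mathfrak C$ of Proposition \ref{heart}.

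Substituting $(r,c,d)=(2,0,-\tfrac12)$ into the second inequality of Proposition \ref{Prop 2.1}(ii) gives
\[
W_{\alpha,\beta}(E)\;=\;2(\alpha^2+\beta^2)+6e\beta+1\;\ge\;0
\]
at every $(\alpha,\beta)$ where $E$ is tilt-semistable. The numerical wall $W_1$ determined by $\OO_X(-1)$ has center $-3/4$ and radius $1/4$, and is realized as an actual wall by the exact triple $0\to \OO_X(-1)[1]\to E\to G\to 0$ with $\ch(G)=(3,-H,0,e-\tfrac13)$, for which the $\Delta$-inequality of the wall-crossing proposition gives $\Delta(\OO_X(-1)[1])+\Delta(G)=0+1\le \Delta(E)=2$. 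Candidate larger walls cut out by $\OO_X(a)$ for $a\le -2$, by $\SS(-k)$ for $k\ge 0$, or by their shifts all violate this $\Delta$-inequality and so are not actual. Hence $E$ is tilt-semistable at the apex $(1/4,-3/4)$ of $W_1$, and evaluating the displayed inequality there yields $9/4-9e/2\ge 0$, that is $e\le 1/2$.

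Next I place a shift of $E$ in $\mathfrak C$. Throughout the parameter range $0<\alpha<1/3$, $\beta\in[-1/2,0]$, $s=1/6$ of Proposition \ref{heart}(i), the numerator $2\beta^2-2\alpha^2-1$ of $\nu_{\alpha,\beta}(E)$ is strictly negative, so $\nu_{\alpha,\beta}(E)<0$, which puts $E\in\FF'_{\alpha,\beta}$ and $E[1]\in\AA^{\alpha,\beta}(X)$. Proposition \ref{heart}(ii) applied at a point on the hyperbola $\alpha^2=\beta^2-\tfrac12$ (necessarily at $\beta<-1/\sqrt2$, outside the range of (i)) transfers tilt-semistability of $E$ into $\lambda_{\alpha,\beta,1/6}$-semistability of $E[1]$; using the wall analysis of the first paragraph one then checks that $E[1]$ crosses no $\lambda$-wall as $(\alpha,\beta)$ is deformed into the range of Proposition \ref{heart}(i). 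That proposition then places $E[1]$ in either $\TT''_\gamma\subset\mathfrak C$ or $\FF''_\gamma$, so that $E[1]\in\mathfrak C$ or $E[2]\in\mathfrak C$.

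Every object in $\mathfrak C$ is a non-negative integer combination of the simples $\OO_X(-1)[3]$, $\SS(-1)[2]$, $\OO_X[1]$, $\OO_X(1)$, whose Chern characters are $(-1,1,-\tfrac12,\tfrac13)$, $(2,-1,0,\tfrac16)$, $(-1,0,0,0)$, $(1,1,\tfrac12,\tfrac13)$ respectively. Solving the linear system for $\ch(E[1])=(-2,0,\tfrac12,-e)$ gives the one-parameter family $(a,2a+1,4a+5,a+1)$ with $a\in\mathbb Z_{\ge 0}$ and $e=-a-\tfrac12\le -\tfrac12$. The alternative $\ch(E[2])=\ch(E)$ forces $(a,2a-1,4a-5,a-1)$ with $a\ge 2$ and $e=a-\tfrac12\ge 3/2$, which contradicts the bound $e\le 1/2$ established above. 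Only the first case can occur, so $e\le -\tfrac12$. The main obstacle is the deformation step of the second paragraph: since the hyperbola $\nu_{\alpha,\beta}(E)=0$ lies entirely at $\beta\le -1/\sqrt2$, outside the range $\beta\in[-1/2,0]$ of Proposition \ref{heart}(i), one cannot directly invoke Proposition \ref{heart}(ii). Verifying that $E[1]$ encounters no $\lambda$-wall as the stability parameters are interpolated between these regions is the technical heart of the argument; once that is in hand, the Chern-character bookkeeping yields the bound essentially by counting.
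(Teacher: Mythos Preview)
Your proof has a genuine gap, and you identify it yourself: the deformation step in the second paragraph is not carried out. The hyperbola $\nu_{\alpha,\beta}(E)=0$ lives entirely in the region $\beta\le -1/\sqrt2$, while Proposition~\ref{heart}(i) only gives you the heart $\mathfrak C$ for $\beta\in[-\tfrac12,0]$. You cannot invoke Proposition~\ref{heart}(ii) inside the range of (i), and bridging the two regions by a $\lambda$-wall-crossing argument is not a formality --- you would need to classify all Bridgeland walls for this class along the path, and nothing in the paper gives you that for free. In addition, Proposition~\ref{heart}(ii) has the hypothesis that every $\nu_{\alpha_0,\beta_0}$-semistable object of class $v$ is $\nu_{\alpha_0,\beta_0}$-stable, which you do not check. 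Your first paragraph also has soft spots: the assertion that $W_1$ is ``realized as an actual wall'' is unmotivated (you never exhibit such a destabilizing triple for a given $E$), and ruling out larger walls by testing only $\OO_X(a)$ and $\SS(-k)$ is not exhaustive.

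The paper's argument is both shorter and avoids $\mathfrak C$ entirely. One shows directly that no semicircular wall meets the line $\beta=-1$: any destabilizer at $\beta=-1$ would have $\ch^{-1}_{\le 2}=(R,H,xH^2)$ with $x-\tfrac14=\alpha^2(R-1)$, and each case $R=1$, $R\ge 2$, $R\le 0$ is ruled out by integrality or by $\Delta\ge 0$. Hence $E$ is tilt-semistable along the whole ray $\beta=-1$, which forces $\Hom(\OO_X,E)=0$ and $\Hom(E,\omega_X[1])=0$, i.e.\ $h^0(E)=h^2(E)=0$. Then Riemann--Roch on $X_2$ gives $\chi(E)=e+\tfrac12=-h^1(E)-h^3(E)\le 0$, and the bound $e\le -\tfrac12$ follows immediately. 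No placement in $\mathfrak C$ and no $\lambda$-stability are needed.
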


\begin{proof} Suppose that there is a semicircular wall intersecting the line
	$\beta=-1$. We have $\ch_{\le 2}^{-1}(E)=(2,2H,\frac{H^2}{2})$, so in this case
	there is a destabilizing
	sheaf $F\into E$ with $\ch_{\le 2}(F)=(R,H,xH^2)$. From the condition $\nu_{\alpha,-1}(F)=
	\nu_{\alpha,-1}(E)$ we obtain $x-\frac 14=\alpha^2(R-1)$. The case $R=1$ is impossible, since
	$x\in\frac 12\mathbb Z$; for $R\ge 2$ we have $x>\frac 14$, and this
	contradicts the condition $\Delta(F)\ge 0$; if $R\le 0$, then instead of $F$ we can work
	with the sheaf $E/F$.
	
Note that from the tilt semistability of $E$ it follows that $H^0(E)=0$. Also
$H^2(E)\cong\Hom(E,\omega_X[1])=0$, since if a nonzero morphism
$E\to\omega_X[1]$ exists we would have a semicircular wall intersecting the line $\beta=-1$. Todd's class
for $X$ has the form $\mathrm{td}(T_X)=(1,\frac{3H}{2},\frac{13H^2}{12},1).$
By the Riemann--Roch Theorem we find that $e+\frac{1}{2}=\chi(E)=-h^1(E)-h^3(E)\leq 0,$ that is
$e\le-\frac 12$.
\end{proof}

\begin{lemma}\label{lemma 3.5} Let $E\in\Coh^\beta(X)$ be a tilt-semistable
	object with $\ch(E)=(2,0,-H^2,e)$. Then $e\le 1$, and if $e=1$ then $E$
	is destabilized by an exact triple $0\to\OO_X(-1)^{\oplus 6}\to E\to\SS(-2)^{\oplus 2}[
	1]\to 0.$
\end{lemma}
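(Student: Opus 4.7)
The plan is to adapt the strategy of Lemma~\ref{spinor} to the present class. Using \eqref{c vers ch} one computes $\ch(E(1))=(2,\,2H,\,0,\,(e-\tfrac{4}{3})[\mathrm{pt}])$, and in particular $\Delta(E(1))=c^2-2rd=4$. Solving $\nu_{\alpha,\beta}(E(1))=0$ yields the hyperbola $(\beta-1)^2-\alpha^2=1$; I would choose the concrete point $(\alpha_0,\beta_0)=(\tfrac14,\,1-\tfrac{\sqrt{17}}{4})$, which lies in the admissibility region $\alpha_0<\tfrac13,\ \beta_0\in[-\tfrac12,0]$ of Proposition~\ref{heart} and satisfies $H^2\cdot\ch_1^{\beta_0}(E(1))>0$. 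Since $E$ is tilt-semistable, so is $E(1)$; Proposition~\ref{heart}(ii) with $s=\tfrac{1}{6}$ equates tilt- and $\lambda_{\alpha,\beta,s}$-semistability in a neighborhood of $(\alpha_0,\beta_0)$, and part~(i) places $E(1)$ or $E(1)[1]$ into the heart $\mathfrak{C}=\langle\OO_X(-1)[3],\SS(-1)[2],\OO_X[1],\OO_X(1)\rangle$.

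Next, I would expand
\begin{equation*}
\ch(E(1)) = a\,\ch(\OO_X(-1)[3]) + b\,\ch(\SS(-1)[2]) + c\,\ch(\OO_X[1]) + d\,\ch(\OO_X(1))
\end{equation*}
using \eqref{v(S)} and match components, obtaining the unique solution $a=d=e-1,\ b=2e-4,\ c=4e-10$. Requiring $a,b,c,d$ to share a sign leaves two branches: the nonpositive one ($E(1)[1]\in\mathfrak{C}$), giving $e\le 1$, and the nonnegative one ($E(1)\in\mathfrak{C}$), giving $e\ge\tfrac{5}{2}$. The second branch I expect to eliminate by noting that $\nu_{\alpha_0,\beta_0}(E(1))=0$ forces $E(1)\in\FF'_{\alpha_0,\beta_0}$, so that $E(1)[1]$, rather than $E(1)$, is the object sitting in $\AA^{\alpha_0,\beta_0}(X)$; this is reinforced by the inequality $W_{\alpha,\beta}(E(1))\ge 0$ of Proposition~\ref{Prop 2.1}(ii), which a direct computation shows to fail for $e\ge\tfrac{5}{2}$ at points of tilt-semistability (e.g.\ near $(\alpha,\beta)=(0,0)$). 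Hence only the nonpositive branch survives and $e\le 1$.

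For the equality case $e=1$, the solution becomes $(a,b,c,d)=(0,-2,-6,0)$, so $E(1)[1]\in\mathfrak{C}$ is an iterated extension of $\SS(-1)[2]^{\oplus 2}$ and $\OO_X[1]^{\oplus 6}$. The Ext-vanishings between these two summands (which reduce to $H^i(\SS(-1))=0$ for all $i$, a consequence of \eqref{spin tr}) pin down the filtration in $\mathfrak{C}$ to the single short exact sequence
\begin{equation*}
0\to \OO_X[1]^{\oplus 6}\to E(1)[1]\to \SS(-1)[2]^{\oplus 2}\to 0.
\end{equation*}
Rotating the associated triangle, twisting by $\OO_X(-1)$ and shifting by $[-1]$, produces exactly the destabilizing triple $0\to\OO_X(-1)^{\oplus 6}\to E\to\SS(-2)^{\oplus 2}[1]\to 0$ claimed in the statement.

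The principal obstacle is the correct tracking of shifts at the wall $\nu=0$, where the denominator of $\lambda_{\alpha,\beta,s}$ vanishes and one must be careful to identify which of $E(1)$, $E(1)[1]$ (or potentially $E(1)[2]$) genuinely belongs to $\mathfrak{C}$; once that is settled, the subsequent $\Hom$ and Ext computations pinning down the extension follow the template of Lemma~\ref{spinor} quite closely.
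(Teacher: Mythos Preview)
Your approach has a genuine gap that distinguishes it from Lemma~\ref{spinor}. In that lemma the crucial preliminary step was to observe that $H^2\cdot\ch_1^{-1}(E)$ takes the \emph{minimal} positive value, so there are no semicircular walls and $E$ is tilt-semistable at every $(\alpha,\beta)$; only then does the paper invoke Proposition~\ref{heart} at a specific point. Here, however, $H^2\cdot\ch_1^{-1}(E)=4$ is not minimal, and $E$ does admit semicircular walls (indeed the conclusion of the lemma exhibits one). The hypothesis says only that $E$ is tilt-semistable at \emph{some} $(\alpha,\beta)$, not at your chosen $(\alpha_0,\beta_0)$; so you cannot assume $E(1)$ is $\nu_{\alpha_0,\beta_0}$-semistable, and your appeal to $W_{\alpha,\beta}(E(1))\ge 0$ ``near $(0,0)$'' suffers from the same defect. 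Moreover, Proposition~\ref{heart}(ii) requires the stronger hypothesis that all $\nu_{\alpha_0,\beta_0}$-semistable objects of class $v$ are $\nu_{\alpha_0,\beta_0}$-\emph{stable}, which you never verify and which in fact fails here.

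The paper's proof confronts this directly: it works at $\beta=-1$, classifies the possible destabilizing subobjects $F\hookrightarrow E$ (showing only rank one survives, with $\ch^{-1}_{\le 2}(F)=(1,H,0)$), and then observes that such an $F$ \emph{does} have minimal $\ch_1^{-1}$, hence no walls. A Riemann--Roch argument with cohomology vanishing then bounds $\ch_3(F)\le\tfrac12$, and likewise for $E/F$, giving $e\le 1$. Only at the equality step, and only for the wall-free object $F$ (or for $E$ itself in the residual case where $E$ has no semicircular wall), is Proposition~\ref{heart} invoked to produce the triple $0\to\OO_X(-1)^{\oplus3}\to F\to\SS(-2)[1]\to 0$, from which the claimed triple for $E$ is assembled. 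Your argument could be repaired by inserting this wall analysis first, but as written it skips the essential reduction to an object of minimal $\ch_1^\beta$.
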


\begin{proof} We have $\beta<0$, and any semicircular wall intersects the line
$\beta=-1$. Since $\ch_{\le 2}^{-1}(E)=(2,2H,0)$, then any destabilizing object $F\into E$ has $\ch^{-1}(F)=(R,H,xH^2,e')$. Then the equality $\nu_{\alpha,-1}(F)=\nu_{\alpha,-1}(E)$ takes the form $x=(R-1)\frac{\alpha^2}{2}$.
	If $R=1$, then $x=0$. Since $\ch_1^{-1}(F)$ is minimal, the object $F$ is
	tilt-semistable for all $\alpha>0,\beta<0$.
	
Since $F$ is stable, $H^0(F)=0$. If $H^2(F)\neq 0$, then, by Serre duality,
there is a nontrivial morphism $F\to\mathcal O_X(-3)[1]$. However, such a morphism
would define a semicircular wall, so $H^2(F)=0$.

By the Riemann--Roch Theorem we find that $e'-\frac{1}{2}=\chi(F)=-h^1(F)-h^3(F)\leq 0.$
Thus, $e'\leq\frac 12$. Since $\ch_{\le 2}(E/F)=\ch_{\le 2}(F)$, also
$\ch_3(E/F)\leq\frac 12$. Therefore, in this case $e\le 1$, and the equality
is achieved at $e'=\frac 12$. Applying Proposition \ref{heart}, we get for this
case an exact triple $0\to\OO_X(-1)^{\oplus 3}\to F\to \SS(-2)[1]\to 0,$
and a similar exact triple for $E/F$, from which the statement of the lemma follows for $R=1$.
	
If $R\neq 1$, then, without loss of generality, we can assume that $R\geq 2$,
then $x>0$. However, the equation $\Delta(F)\ge 0$ implies that $x\le\frac 14$, which is
impossible, since $x\in\frac 12\mathbb Z$. Finally, if $E$ has no destabilizing
subobjects defining a semicircular wall, then, carrying out the argument for $E$,
similar to the argument for $F$ in the case $R=1$, we obtain the required statement.
\end{proof}

\begin{lemma}\label{2,0,-3} Let $E\in\Coh^\beta(X)$ and $\ch(E)=(2,0,-\frac
	32H^2,e)$. Then $e\le \frac 52$, and if $e=\frac 52$, then $E$ is
	destabilized by an exact triple $0\to\SS(-1)\to E\to\OO_{Q_2}(-1)\to 0.$
\end{lemma}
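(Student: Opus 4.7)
The approach follows the pattern of Lemmas \ref{2,0,-1} and \ref{lemma 3.5}: combine wall analysis in tilt stability with cohomology vanishing and the Riemann--Roch formula on $X_2$, and then exhibit the destabilizing triple in the equality case. The first task is to establish $H^0(E)=H^2(E)=0$. A direct computation shows $\nu_{\alpha,\beta}(\OO_X)-\nu_{\alpha,\beta}(E)=-3/(4\beta)$, strictly positive for all $\beta<0$; since $\OO_X$ is tilt-stable everywhere (Proposition \ref{Prop 2.1}), any nonzero $\OO_X\to E$ would strictly destabilize $E$, so $H^0(E)=0$. For $H^2(E)\cong\Hom(E,\OO_X(-3)[1])^\vee$, I would analyze the numerical wall $\nu(\OO_X(-3)[1])=\nu(E)$, a semicircle centered at $\beta=-7/4$ of radius $5/4$ over the interval $\beta\in(-3,-1/2)$. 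Inside this disk one has $\nu(\OO_X(-3)[1])<\nu(E)$, so factoring any nonzero $E\to\OO_X(-3)[1]$ through its image $Q$ and using tilt-stability of the shifted line bundle $\OO_X(-3)[1]$ produces the impossible chain $\nu(E)\le\nu(Q)\le\nu(\OO_X(-3)[1])<\nu(E)$.

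Next I apply Riemann--Roch. Using $\mathrm{td}(X_2)=(1,\tfrac{3H}{2},\tfrac{13H^2}{12},[\mathrm{pt}])$ and $\ch(E)=(2,0,-\tfrac{3}{2}H^2,e)$, a direct computation gives
$\chi(E)=e+\tfrac{3H}{2}\cdot(-\tfrac{3}{2}H^2)+2=e-\tfrac{5}{2}$.
Combining with $\chi(E)=-h^1(E)-h^3(E)\le 0$ (from the vanishings above together with an analogous argument for $H^3(E)\cong\Hom(E,\OO_X(-3))^\vee$, which follows from $\mu(\OO_X(-3))=-3<0=\mu(E)$) gives the desired bound $e\le\tfrac{5}{2}$.

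For the equality case $e=\tfrac{5}{2}$, I first verify the Chern character identity $\ch(\SS(-1))+\ch(\OO_{Q_2}(-1))=(2,-H,0,\tfrac{1}{6})+(0,H,-\tfrac{3}{2}H^2,\tfrac{7}{3})=\ch(E)$, so the proposed extension has the correct invariants. To show uniqueness, I analyze the wall $W\colon\nu(\SS(-1))=\nu(E)$, which is the semicircle of center $\beta=-\tfrac{3}{2}$ and radius $\tfrac{\sqrt{3}}{2}$. Using the discriminant inequality $\Delta(F)+\Delta(G)\le\Delta(E)=6$ for any destabilizing triple $0\to F\to E\to G\to 0$ of tilt-semistable objects along $W$, together with $\Delta(F),\Delta(G)\ge 0$ from Proposition \ref{Prop 2.1}(ii), I would enumerate the allowable splittings of $\ch(E)$ on the ray through the origin in the $(r,cH,dH^2)$-space that passes through $\ch_{\le 2}(E)$, and verify that the only consistent decomposition by tilt-semistable summands is $\ch(\SS(-1))+\ch(\OO_{Q_2}(-1))$.

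The main obstacle is this equality-case classification. One must carry out a careful case analysis on the rank and Chern classes of potential destabilizing subobjects of $E$ along the wall $W$, in the spirit of the $R=1$ versus $R\neq 1$ dichotomy in Lemma \ref{lemma 3.5}, and rule out every alternative Jordan--Hölder decomposition of $E$ compatible with the Chern character $(2,0,-\tfrac{3}{2}H^2,\tfrac{5}{2})$ and with the Bogomolov--Gieseker and $W_{\alpha,\beta}\ge 0$ constraints of Proposition \ref{Prop 2.1}.
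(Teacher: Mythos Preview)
Your Riemann--Roch approach has a genuine gap in the $H^2(E)=0$ step, and this is exactly why the paper does \emph{not} follow the pattern of Lemma~\ref{2,0,-1} here. Your slope comparison $\nu(\OO_X(-3)[1])<\nu(E)$ holds only \emph{inside} the semicircle of center $-7/4$ and radius $5/4$; at large $\alpha$ (e.g.\ for any Gieseker-semistable sheaf $E$) the inequality reverses, so the chain $\nu(E)\le\nu(Q)\le\nu(\OO_X(-3)[1])$ is not contradictory there. The hypothesis only gives tilt-semistability at \emph{some} $(\alpha,\beta)$, which may well lie outside your disk, and you have not argued that $E$ remains semistable as you move into the interior. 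In Lemma~\ref{2,0,-1} this worked because there were \emph{no} semicircular walls along $\beta=-1$, so $E$ was automatically semistable for all $\alpha$ there; with $d=-\tfrac32$ walls do exist, and the vanishing argument cannot be transplanted without first controlling them.

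The paper instead starts from the wall side. One computes $W_{0,-1}(E)=15-6e$, so $e\ge\tfrac52$ forces an actual semicircular wall meeting the line $\beta=-1$ (or having $(0,-1)$ as a limit point). Since $\ch^{-1}_{\le2}(E)=(2,2H,-\tfrac12H^2)$, any destabilizing $F$ along this wall has $\ch_1^{-1}(F)\in\{0,H\}$. The case $\ch_1^{-1}(F)=H$ with $\ch_0(F)=R$ gives $x+\tfrac14=\alpha^2(R-1)$; for $R>1$ the Bogomolov inequality forces $x=0$ and $R\in\{2,3\}$. For $R=2$ one invokes Lemma~\ref{spinor} on $F$ and a separate wall analysis on the rank-zero quotient $E/F$ (with $\ch_{\le2}=(0,H,-\tfrac32H^2)$) to get $e\le\tfrac52$, with equality forcing $F\cong\SS(-1)$ and $E/F\cong\OO_{Q_2}(-1)$; the vanishing $\Ext^1(\SS(-1),\OO_{Q_2}(-1))=0$ then pins down that $\SS(-1)$ is the subobject. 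For $R=3$ one shows the resulting $F'$ would split off $\OO_X(-1)$ and hence fail to be tilt-semistable, so equality cannot occur. The $\ch_1^{-1}(F)=0$ case is handled similarly and gives nothing new beyond $e\le\tfrac32$ when $r=6$.

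Your equality-case sketch (``enumerate the allowable splittings \ldots and verify that the only consistent decomposition \ldots'') is too vague: the substantive work is precisely ruling out the $R=3$ destabilizer and the $r=6$ case, and establishing which of $\SS(-1)$, $\OO_{Q_2}(-1)$ is the subobject via the $\Ext^1$ vanishing. None of this is captured by the discriminant inequality $\Delta(F)+\Delta(G)\le\Delta(E)$ alone.
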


\begin{proof}We have $W_{0,-1}(E)=15-6e$, and if $e\ge\frac 52$, then $E$
	destabilizes at $\beta=-1$, or the point $\beta=-1,\alpha=0$ is a
	limit point of a semicircular wall. By calculation we get $\ch^{-1}(E)=(2,2H,
	-\frac 12H,e-\frac 73)$, which means that there is a destabilizing subobject or a
	quotient $F$ with $\ch_1^{-1}(F)\in\{0,H\}$. If $\ch^{-1}(F)=(R,H,xH^2,e')$, then
	the relation $\nu_{\alpha,1}(E)=\nu_{\alpha,1}(F)$ is equivalent to the equality $x+\frac
	14=\alpha^2(R-1)$. We can assume that $R>1$; then $\Delta(F)\ge 0$ implies
	$x=0$. We have $\ch^{-1}(E/F)=(2-R,H,-\frac 12H^2,e'')$, and from the inequality
	$\Delta(E/F)\ge 0$ we obtain $R\in\{2,3\}$.
	
Let $R=2$. From Lemma \ref{spinor} it follows that $e'\le -\frac 16$, and in the case
of equality $F\cong \SS(-1)$. We have $\ch(E/F)=(0,H,-\frac 32H^2,e''+2)$, and any wall
for $E/F$ intersects the line $\beta=-\frac 32$. Since $\ch_1^{-\frac 32}(E/F)=H$,
then any destabilizing subobject $F'\into E/F$ has $\ch_{\le 2}^{-\frac 32}(F'
)=(r,\frac 12H,xH^2)$. From here we find $0=2x-\alpha^2 r$, therefore assuming that
$r>0$, we get the only possibility $r=1,x=\frac 18$. From Lemma \ref{trivial}
it follows that $\ch_3(F')\le -\frac 13$ (in the case of equality $F'\cong\OO_Q(-1)$).
In addition, $\ch_{\le 2}((E/F)/F')=(-1.2H,-2H^2)$, so $\ch_3$ of this object is
less than or equal to $\frac 83$. From these inequalities we find $e\le\frac 52$. When
equality holds, we can assume that both objects $F$ and $E/F$ are $\nu_{\alpha,\beta}
$-semistable, which implies that $E/F\cong\OO_{Q_2}(-1)$. In this case
we have $\Ext^1(F,E/F)=0$, therefore $F$ is indeed a subobject,
and not a quotient of the object $E$. This vanishing is proved using the definition of 
an exceptional collection and the exact triple \eqref{spin tr}.
	
If $R=3$, we denote the destabilizing subobject by $F'\into E$, and $\ch(F')=
(3,-2H,\frac 12H^2,e')$. Any semicircular wall must intersect the line
$\beta=-1$, but $\ch^{-1}_1(F')=H$. Hence $F'(1)[1]\in\mathfrak C$, and a calculation
gives $e'\le-\frac 16$. In addition, $\ch(E/F')=(-1,2H,-2H^2,e'')$, where $e''\le
\frac 83$. From these bounds we obtain $e\le\frac 52$. In case of equality
we obtain exact triples $0\to F'\to E\to \OO_X(-2)[1]\to 0,$ \ \ $0\to \OO_X(-1)^{
	\oplus 5}\to F'\to \SS(-2)[1]\to 0.$ From the last exact triple it follows that $F'\cong\OO_X(-1)\oplus F''$,
and, as in the proof of the lemma \ref{spinor}, $\nu_{\alpha,\beta}(\OO_X(-1))\neq
\nu_{\alpha,\beta}(F'')$, so the object $F'$ is not tilt-semistable.
We find that $e<\frac 52$ in the case of $R=3$.
	
Finally, if $\ch^{-1}_{\le 2}(F)=(r,0,xH^2)$ and $\nu_{\alpha,\beta}(F)=\nu_{\ alpha,\beta}(E)$
on a semicircular wall having a limit point $\beta=-1,\alpha=0$, then the slope
$\nu_{\alpha,\beta}(F)$ must tend to a finite limit when we tend
to this point along the wall, so $x=0$. From the inequality $0\le\Delta(E/F)\le\Delta(E)$
it follows that $0\le r\le 6$. Cases with $r\le 5$ reduce to the case $R=2$ discussed above, 
while for $r=6$ we get $(E/F)(1)[1]\in\mathfrak C$, and
a calculation gives $\ch_3((E/F)(1))\le -\frac 56$, whence $\ch_3(E)\le\frac32$.
\end{proof}

\begin{lemma}\label{2,-1,-1} Let $E\in\Coh^\beta(X)$ be a tilt-semistable
	object with $\ch(E)=(2,-H,-\frac 12H^2,e)$. Then $e\le \frac 53$, and if $e=\frac
	53$, then $E$ is destabilized by an exact triple $0\to\OO_X(-1)^{\oplus 3}\to E\to
	\OO_X(-2)[1]\to 0.$
\end{lemma}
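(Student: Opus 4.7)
Following the pattern of Lemmas \ref{spinor}, \ref{lemma 3.5}, and \ref{2,0,-3}, my plan is to evaluate the Bogomolov--Gieseker-type inequality of Proposition \ref{Prop 2.1}(ii) at $(\alpha,\beta)=(0,-1)$ and then classify all destabilizing walls. A direct computation gives
$$
W_{0,-1}(E) \;=\; \bigl[3(\alpha^2+\beta^2)+(6e-1)\beta+1+3e\bigr]\Big|_{(\alpha,\beta)=(0,-1)} \;=\; 5-3e,
$$
so if $e>\tfrac{5}{3}$ this quantity is negative and Proposition \ref{Prop 2.1}(ii) forces $E$ to be destabilized by a semicircular wall either crossing $\beta=-1$ or tangent to the $\beta$-axis at that vertex.

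At $\beta=-1$ we have $\ch^{-1}(E)=(2,H,-\tfrac{1}{2}H^2,e-\tfrac{4}{3})$, and any destabilizing subobject $F\hookrightarrow E$ in $\Coh^{-1}(X)$ has $\ch^{-1}(F)=(R,c_FH,xH^2,e_F)$ with $c_F\in\{0,1\}$. For $c_F=1$ the wall equation $\nu_{\alpha,-1}(F)=\nu_{\alpha,-1}(E)$ reads $2x+1=\alpha^2(R-2)$; combined with $\Delta(F)=1-2Rx\geq 0$ and $\Delta(E/F)=(2-R)(1+2x)\geq 0$, every integer $R\ne 2$ produces incompatible sign requirements, while $R=2$ yields the degenerate wall $\ch_{\le 2}(E/F)=0$ of Proposition \ref{Prop 2.1}(vi), reducing to the torsion-free part of $E$. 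The symmetric case $c_F=0$ contributes the nontrivial wall tangent to the $\beta$-axis at $\beta=-1$; its extremal configuration is $R=3$, $x=0$, giving $\ch^{-1}(F)=(3,0,0,0)$ with $\Delta(F)=0$. By Proposition \ref{Prop 2.1}(vii) combined with the vanishing $\Ext^1(\OO_X(-1),\OO_X(-1))=H^1(\OO_X)=0$, a tilt-semistable $F$ with these invariants is forced to be $\OO_X(-1)^{\oplus 3}$, and matching $\ch^{-1}(E/F)=(-1,H,-\tfrac{1}{2}H^2,\tfrac{1}{3})$ against $\ch^{-1}(\OO_X(-2)[1])$ identifies the quotient. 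The corresponding semicircle $\alpha^2+(\beta+\tfrac{3}{2})^2=\tfrac{1}{4}$ coincides precisely with $\{W_{\alpha,\beta}(E)=0\}$ when $e=\tfrac{5}{3}$.

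This establishes $e\le\tfrac{5}{3}$; in the extremal case $E$ fits into the triple $0\to\OO_X(-1)^{\oplus 3}\to E\to\OO_X(-2)[1]\to 0$ realized by any element of the $15$-dimensional extension space $\Hom(\OO_X(-2),\OO_X(-1)^{\oplus 3})\cong H^0(\OO_X(1))^{\oplus 3}$, and for a generic choice the cone of the corresponding morphism is a tilt-semistable sheaf with the required Chern character. The main technical obstacle is the degenerate wall with $\ch_{\le 2}(E/F)=0$ in the subcase $c_F=1$, $R=2$, where the standard wall-crossing analysis collapses: one must argue that the resulting zero-dimensional torsion quotient can be absorbed into a reduction to a torsion-free representative of $E$, after which only the nontrivial wall coming from the $c_F=0$, $R=3$ configuration controls $\ch_3(E)$ and the uniform bound $e\le\tfrac{5}{3}$ follows.
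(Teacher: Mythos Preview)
Your computation $W_{0,-1}(E)=5-3e$ is correct, but the wall analysis you build on it at $\beta=-1$ is the wrong place to work. Since $\ch_1^{-1}(E)=H$, we have $H^2\cdot\ch_1^{-1}(E)=2$, and any subobject $F\in\Coh^{-1}(X)$ with $\ch_1^{-1}(F)=c_FH$ has $H^2\cdot\ch_1^{-1}(F)=2c_F$ with $c_F\in\Z_{\ge0}$. For a genuine wall one needs $0<2c_F<2$, which is impossible. Your two cases $c_F\in\{0,1\}$ are exactly the degenerate boundary values: $c_F=0$ gives $\nu_{\alpha,-1}(F)=+\infty$, and your own computation in the $c_F=1$ case shows $\Delta(E/F)=(2-R)(1+2x)=-\alpha^2(R-2)^2\le0$, forcing $\alpha=0$. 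So no semicircular wall meets the open ray $\{(\alpha,-1):\alpha>0\}$, and your case-by-case ``wall equation at $\beta=-1$'' is not analyzing an actual wall crossing. The paragraph about the ``degenerate wall'' with $\ch_{\le2}(E/F)=0$ and ``absorbing zero-dimensional torsion'' does not resolve this; it is patching a symptom of working at the wrong $\beta$.

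The paper instead evaluates at $\beta=-\tfrac32$, where $\ch_1^{-3/2}(E)=2H$, so destabilizing subobjects with $\ch_1^{-3/2}(F)\in\{\tfrac12H,H\}$ genuinely exist and the wall analysis is non-vacuous. There one finds $W_{0,-3/2}(E)=\tfrac{37}{4}-6e<0$ for $e\ge\tfrac53$, and the finitely many possibilities for $(R,x)$ (constrained by $\Delta(F),\Delta(E/F)\ge0$ and integrality) all yield $e\le\tfrac53$, with the extremal triple coming from $F\cong\OO_X(-1)^{\oplus2}$ or equivalently $E/F\cong\OO_X(-2)[1]$. Your identification of the extremal wall and the exact triple is correct, but to prove it you must move to a $\beta$ strictly inside the semicircle $\alpha^2+(\beta+\tfrac32)^2=\tfrac14$ rather than its tangent point $(0,-1)$.
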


\begin{proof} Assume that $e\ge\frac 53$. Then $W_{0,-\frac 32}(E)<0$, and
	this means that there is a wall intersecting the line $\beta=-\frac 32$. In addition, $\ch_{\le 2}^{-\frac
		32}(E)=(2,2H,\frac 14H^2)$, so there is a destabilizing subobject or
	quotient $F$ of the object $E$ with $\ch_1^{-\frac 32}(F)\in\{\frac 12H,H\}.$ If
	$\ch_{\le 2}^{-\frac 32}(F)=(R,\frac 12H,xH^2)$, then from the equality $\nu_{\alpha,-
		\frac 32}(E)=\nu_{\alpha,-\frac 32}(F)$ we get $2x-\frac 18=\alpha^2(2R-1)$.
	Note that $R$ must be odd since $H^2\cdot \ch^{-\frac 32}_1(F)$ is
	odd. Then the inequalities $\Delta(F)\ge 0,\Delta(E/F)\ge 0$ imply $R\in\{-1,1\}$.
	If $R=-1$, then $x=-\frac 18$ and $\ch_3$ is maximized by the object $F\cong\OO_X(-2)[1]
	$. If $R=1$, then $x=\frac 18$ and $F\cong\OO_X(1)$. If $\ch_{\le 2}^{-\frac
		32}(F)=(R,H,xH^2)$, then $x-\frac 18=\alpha^2(R-1)$, $R$ is even, and we can assume
	that $R>1$. But then $R=2,\ x=\frac 14,\ F\cong\OO_X(-1)^{\oplus 2}$. In any case
	we obtain the required inequality and the exact triple.
\end{proof}

\begin{lemma}\label{G_d} For $d\in\Z$ let $G_d\in\Coh^\beta(X)$ be a
	tilt-semistable object with $\ch(G_d)=(0,H,dH^2,e)$. Then the inequality
	$e\le d^2-\frac 16$ holds, and in case of equality we have $G_d\cong\II_{\P^1,Q_2}(d+1)$ for
	some two-dimensional quadric $Q_2\subset X$ and a line $\P^1\subset Q_2$.
\end{lemma}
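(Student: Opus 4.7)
The plan is to combine three ingredients: the quadratic inequality $W_{\alpha,\beta}(G_d)\ge 0$ of Proposition \ref{Prop 2.1}(ii), integrality of the Euler characteristic via Riemann--Roch, and the heart reconstruction of Proposition \ref{heart}. First I would twist by $\OO_X(-d)$ to normalize the Chern character: set $F:=G_d(-d)$, which is again tilt-semistable (twists correspond to a shift of $\beta$) and satisfies $\ch(F)=(0,H,0,(e-d^2)[\mathrm{pt}])$. It then suffices to show $e-d^2\le-\tfrac{1}{6}$, with equality if and only if $F\cong\II_{\P^1,Q_2}(1)$, since the general statement will follow by untwisting and by the triple \eqref{spin tr on X}.

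Direct substitution into Proposition \ref{Prop 2.1}(ii) yields
$$W_{\alpha,\beta}(F)=\alpha^2+\beta^2-3(e-d^2),\qquad \nu_{\alpha,\beta}(F)=-\beta,$$
so every numerical wall defined by a class $w$ with $\ch_0(w)=R\ne 0$ is a semicircle $\alpha^2+\beta^2=2D/R$ centered at the origin, while rank-zero candidates are numerically proportional to $\ch(F)$ and do not produce actual walls. For $R\ge 1$, Proposition \ref{radius} applied with $E=F$, $\ch_0(E)=0$, and $\Delta(F)=1$ bounds the radius by $\rho_W\le 1/(4R^2)$, hence $2D/R=\rho_W^2\le 1/(16R^4)$, forcing $D\le 1/(32R^3)<\tfrac{1}{2}$. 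Since $D\in\tfrac{1}{2}\Z_{>0}$ for any actual wall, this is a contradiction, and the case $R\le-1$ is symmetric. Therefore $F$ admits no actual walls in the upper half-plane, so its tilt-semistability persists up to arbitrarily close to $(0,0)$, and $W_{\alpha,\beta}(F)\ge 0$ in the limit yields $e\le d^2$. Applying Riemann--Roch on $X_2$ with $\mathrm{td}(X_2)=1+\tfrac{3}{2}H+\tfrac{13}{12}H^2+[\mathrm{pt}]$ and $H^3=2$, one computes $\chi(G_d)=e+3d+\tfrac{13}{6}\in\Z$, forcing $e\equiv-\tfrac{1}{6}\pmod 1$. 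Combined with $e\le d^2$ this sharpens the estimate to $e\le d^2-\tfrac{1}{6}$.

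For the equality case $e=d^2-\tfrac{1}{6}$, I would apply Proposition \ref{heart}(i) at a point $(\alpha_0,\beta_0)$ with $\alpha_0<\tfrac{1}{3}$ and $\beta_0\in[-\tfrac{1}{2},0]$ to place $F[1]$ in the heart $\mathfrak{C}=\langle\OO_X(-1)[3],\SS(-1)[2],\OO_X[1],\OO_X(1)\rangle$. Writing $[F[1]]=a[\OO_X(-1)[3]]+b[\SS(-1)[2]]+c[\OO_X[1]]+q[\OO_X(1)]$ with nonnegative integers $a,b,c,q$ and equating Chern characters via \eqref{v(S)} produces the linear system $q=a$, $b=2a+1$, $c=4a+2$, and $e-d^2=-a-\tfrac{1}{6}$. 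Maximality of $e$ forces $a=q=0$, $b=1$, $c=2$, so $F$ fits into an exact sequence $0\to\SS(-1)\to\OO_X^{\oplus 2}\to F\to 0$ in $\Coh(X)$. Arguing as in Lemma \ref{spinor} via the exceptional collection, the $\mathrm{GL}_2$-orbit of any such injection with tilt-semistable cokernel is classified by a point of $\mathbb{G}=\Gr(2,V)$, and comparison with \eqref{spin tr on X} identifies $F$ with $\II_{\P^1,Q_2}(1)$ for a uniquely determined line $\P^1$ on some two-dimensional quadric section $Q_2\subset X$. Untwisting by $\OO_X(d)$ gives $G_d\cong\II_{\P^1,Q_2}(d+1)$.

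The main obstacle I anticipate is in the wall-exclusion step: carefully handling the signs of $D$ and $R$ and ensuring that both sub- and quotient-object destabilizers are covered. However, once Proposition \ref{radius} is combined with the integrality $D\in\tfrac{1}{2}\Z$ as above, the argument is clean. The equality-case uniqueness is then a routine exceptional-collection computation closely analogous to the one in Lemma \ref{spinor}.
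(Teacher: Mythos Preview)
Your proof is correct and takes essentially the same route as the paper: both exclude semicircular walls for the class $(0,H,dH^2)$ and then place the twist $G_d(-d)[1]$ in the heart $\mathfrak{C}$ of Proposition~\ref{heart} to read off the bound and the exact triple $0\to\SS(d-1)\to\OO_X(d)^{\oplus2}\to G_d\to0$. The only differences are cosmetic: the paper excludes walls by noting that $H^2\cdot\ch_1^d(G_d)=2$ is minimal at the integer point $\beta=d$ (so no proper subobject can exist there), whereas you invoke the radius bound of Proposition~\ref{radius}; and the paper obtains $e\le d^2-\tfrac16$ directly from the heart decomposition, making your intermediate step via $W_{\alpha,\beta}\ge 0$ plus the Riemann--Roch congruence unnecessary (though correct). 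For the identification of the cokernel with $\II_{\P^1,Q_2}(1)$, the paper computes it explicitly from a pair of sections of $\SS$, while your appeal to the universal triple \eqref{spin tr on X} over $\mathbb{G}$ achieves the same end.
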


\begin{proof} The curve $\nu_{\alpha, \beta}(G_d)=0$ is a straight line $\beta=d$, and
	there are no semicircular walls, so $(G_d)(-d)[1]\in\mathfrak C$. A calculation
	gives the desired inequality and an exact triple
	\begin{equation}\label{Gd}
		0\to \SS(d-1)\to\OO_X(d)^{\oplus 2}\to G_d\to 0.
	\end{equation}

As is known, the scheme of zeros $(s)_0$ of any section $0\ne s\in H^0(\SS)$ is
a reduced projective line $\P^1\subset X$, from which we obtain an exact triple
$0\to\OO_X\xrightarrow{s}\SS\to\II_{\P^1}(1)\to0$. Hence $\coker(\OO_X^{\oplus 2}
\xrightarrow{s,s'}\SS)=\coker(\OO_X\xrightarrow{\phi}\II_{\P^1}(1)$. Consider
the two-dimensional quadric $Q_2=\mathrm{supp}\coker(\phi)$. The morphism $\II_{Q_2}(1)\cong\OO_X
\xrightarrow{\phi}\II_{\P^1}(1)$ induces an exact triple $0\to\II_{Q_2}(1)
\xrightarrow{\phi}\II_{\P^1}(1)\to\II_{\P^1,Q_2}(1)\to0$. This implies the exactness
of the triple $0\to\OO_X^{\oplus 2}\to\SS\to\II_{\P^1,Q_2}(1)\to0$. Applying the functor $\mathbf{R}\HH om(\cdot,\OO_X(d))$ 
to it and taking into account the easily verifiable isomophism $\lExt^1(\II_{
\P^1,Q_2}(1),\OO_X(d))\cong\II_{\P^1,Q_2}(d+1)$, we obtain an exact triple
$0\to\SS(d-1)\to\mathcal O(d)^{\oplus 2}\to\II_{\P^1,Q_2}(d+1)\to 0$. This triple
by construction is isomorphic to the triple \eqref{Gd} with a suitable choice of sections $s,s'$,
hence $G_d\cong\II_{\P^1,Q_2}(d+1)$.
\end{proof}

\begin{lemma}\label{2,0,-4} Let $E\in\Coh^\beta(X)$ be a tilt-semistable object
	with $\ch(E)=(2,0,-2H^2,e)$. Then $e\le 4$, and if $e=4$, then $E$ is destabilized by one
	of the exact triples
	\begin{equation}\label{triple3}
		0\to \OO_X(-1)^{\oplus 4}\to E\to\mathcal O_X(-2)^{\oplus 2}[1]\to 0,
	\end{equation}
	\begin{equation}\label{triple4}
		0\to\SS(-1)\to E\to \II_{\P^1,Q_2}(-1)\to 0,\quad 0\to\II_{\P^1,Q_2}(-1)\to E\to\SS(-1)\to 0.
	\end{equation}
	In the case when the object $E$ is included in the first exact triple (\ref{triple4}), it is
	tilt-stable for $\beta<0,\alpha\gg 0$ and is not included in the exact triple (\ref{triple3}).
\end{lemma}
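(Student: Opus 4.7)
The plan is to first establish the bound. Inserting $r=2$, $c=0$, $d=-2$ into Proposition \ref{Prop 2.1}(ii) gives $W_{\alpha,\beta}(E) = 8(\alpha^2+\beta^2) + 6e\beta + 16$; evaluating at the limit $(\alpha,\beta) = (0,-1)$ yields $W_{0,-1}(E) = 24-6e \ge 0$, hence $e\le 4$. At $e = 4$ the locus $W_{\alpha,\beta}(E) = 0$ is the semicircular wall $\alpha^2 + (\beta+\tfrac32)^2 = \tfrac14$, meeting the $\beta$-axis at $\beta = -1$ and $\beta = -2$.

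For the equality case I would enumerate the destabilizing walls of $E$, whose radii are controlled by Lemma \ref{radius} and $\Delta(E)=8$. Two walls prove relevant. The first is $W_{\alpha,\beta}(E) = 0$ itself, and its destabilizers are most efficiently detected at its top $(\alpha, \beta) = (\tfrac12, -\tfrac32)$, through which the hyperbola $\nu_{\alpha,\beta}(E) = 0$ passes by Proposition 2.2(ii). Here $\ch^{-3/2}(E) = (2, 3H, \tfrac14 H^2, \tfrac14)$ and $\nu_{1/2,-3/2}(E) = 0$. For a destabilizer $F$ with $\ch^{-3/2}(F) = (R, aH, xH^2, e')$, the wall condition forces $x = R/8$, and combining $\Delta(F), \Delta(E/F)\ge 0$ with $x\in\tfrac12\Z$ and the saturation $\ch_3(E) = 4$ singles out the configuration $a = 2$, $R = 4$, $x = \tfrac12$. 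Detwisting gives $\ch(F) = (4, -4H, 2H^2, -\tfrac43)$ with $\Delta(F) = 0$, so $F\cong\OO_X(-1)^{\oplus 4}$ by Proposition 2.2(vii); the complement then matches $\OO_X(-2)^{\oplus 2}[1]$, producing triple \eqref{triple3}.

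The second relevant wall is $\alpha^2 + (\beta+2)^2 = 2$, on which $\SS(-1)$ (resp. $\II_{\P^1,Q_2}(-1)$) appears as a sub (resp. quotient) of $E$. At its top $(\sqrt 2, -2)$ the hyperbola $\nu_{\alpha,\beta}(E) = 0$ again passes through, and an analogous enumeration at $\beta=-2$ selects the configuration with $\ch(F) = (2, -H, 0, e')$. Lemma \ref{spinor} gives $e' \le \tfrac16$ with equality iff $F\cong\SS(-1)$, and Lemma \ref{G_d} (with $d = -2$) forces the complementary quotient to be $\II_{\P^1,Q_2}(-1)$; both orientations of the triple \eqref{triple4} thus occur. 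All remaining numerical candidates are excluded by a combination of $\Delta$- and $x$-integrality with the bounds of Lemmas \ref{trivial}--\ref{2,0,-3}.

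For the final assertion, if $E$ is given by the first triple of \eqref{triple4}, then Lemma \ref{big alpha} applied to its $\mu$-semistable building blocks shows $E$ is tilt-semistable for $\alpha\gg 0$, $\beta < 0$, with stability following from the nontriviality of the extension. A decomposition of such $E$ in the form \eqref{triple3} would require $\Hom(\OO_X(-1), E) \ne 0$, but for a generic $E$ in \eqref{triple4} this group vanishes by a cohomology computation using \eqref{spin tr on X}, proving inequivalence. The principal technical obstacle is the exhaustive case analysis on each candidate wall: combinations of $\Delta$- and $x$-integrality, Proposition \ref{heart}, and the tight bounds of Lemmas \ref{trivial}--\ref{2,0,-3} are needed to rule out every spurious numerical configuration.
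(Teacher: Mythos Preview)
Your overall strategy---compute $W_{0,-1}(E)=24-6e$ to get the bound, then classify destabilizing sequences when $e=4$---is the same as the paper's. However, the execution diverges in a way that creates real gaps.

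First, you analyze candidate walls at their \emph{tops} ($\beta=-\tfrac32$ and $\beta=-2$), whereas the paper works at the single value $\beta=-1$. The point of fixing $\beta=-1$ is that every semicircular wall for $E$ meets this line (the smallest wall $W_{\alpha,\beta}(E)=0$ touches it at $\alpha=0$), and there $\ch_1^{-1}(E)=2H$ forces any destabilizer $F$ to have $\ch_1^{-1}(F)=H$. This reduces the enumeration to a handful of $(R,x)$-pairs ($R\in\{1,2,3\}$, $x\in\{-\tfrac12,0\}$), each handled explicitly. Your approach at $\beta=-\tfrac32$ has $\ch_1^{-3/2}(E)=3H$, so $a\in\{\tfrac12,1,\tfrac32,2,\tfrac52\}$, and your claim that integrality plus $\Delta$-inequalities ``singles out'' $a=2,R=4$ is not justified; you would need to rule out the other cases, several of which do occur (e.g.\ $R=2$, $x=-\tfrac12$ in the paper's analysis). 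The completeness of ``two walls prove relevant'' is also asserted, not proved.

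Second, and more seriously, your argument for the final assertion is wrong. You want to show that an object $E$ in the first triple \eqref{triple4} is not in \eqref{triple3}, and you claim this follows because $\Hom(\OO_X(-1),E)=0$ ``for generic $E$''. But (a) the statement is for \emph{every} such $E$, not a generic one, and (b) the vanishing is false: from $0\to\SS(-1)\to E\to\II_{\P^1,Q_2}(-1)\to0$ one gets $\Hom(\OO_X(-1),\SS(-1))\cong H^0(\SS)\cong\bk^4$, so $\Hom(\OO_X(-1),E)$ is typically nonzero. The paper argues in the opposite direction: it shows that for any $E$ fitting in \eqref{triple3} one has $\Hom(\SS(-1),E)=0$, which immediately excludes the first triple of \eqref{triple4} since that triple supplies a nonzero map $\SS(-1)\to E$. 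The tilt-stability of $E$ for $\alpha\gg0$ is then checked directly at $\beta=-1$, $\alpha>1$, by classifying possible destabilizers there---your appeal to Lemma~\ref{big alpha} on ``building blocks'' does not produce this conclusion.
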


\begin{proof}
	We have $W_{0,-1}(E)=24-6e$, so the inequality $W_{0,-1}\ge 0$ is equivalent to $e\le 4$.
	Therefore, it is sufficient to check that $e\le 4$ for objects that have a
	semicircular wall at $\beta=-1$. We have $\ch_{\le2}^{-1}(E)=(2,2H,-H^2)$, so
	in this case there is a destabilizing subobject or a quotient $F$ of the object $E$
	with $\ch_1^{-1}(F)=H$. Let $\ch_{\le2}^{-1}(F)=(R,H,xH^2)$, then $\nu_{\alpha,-1}(F)
	=\nu_{\alpha,-1}(E)$ implies $x+\frac 12=\frac{\alpha^2}{2}(R-1)$. We can assume that
	$R\ge2$, and then the inequality $\Delta(F)\ge 0$ implies $x\in\{-\frac 12,0\}$.
	
Assume that $x=-\frac 12$. From the inequality $\Delta(E/F)\ge 0$ we obtain $R\in\{2,3\}$.
If $R=2$, then $\ch_{\le 2}(F)=(2,-H,-\frac 12H^2)$ and $\ch_{\le2}(E/F)=(0 ,H,-\frac 32H^2
)$ (we assume, by abuse of notation, that $F$ is a subobject of $E$).
Therefore, using Lemma \ref{2,-1,-1} and the proof of Lemma \ref{2,0,-3}, we obtain
the desired inequality and one of the exact triples
\begin{equation}\label{triple1}
	0\to F\to E\to\OO_{Q_2}(-1)\to 0,
\end{equation}
\begin{equation}\label{triple2}
	0\to \OO_{Q_2}(-1)\to E\to F\to 0,
\end{equation}
where $F\in M(2,-H,-\frac 12H^2,\frac 53).$ By Lemma \ref{2,-1,-1} we have an exact triple
$0\to\OO_X(-1)^{\oplus 3}\to F\to\OO_X(-2)[1]\to 0$, there is also an exact triple $0\to\OO_X(-1)\to\OO_{Q_2}(-1)\to\OO_X(-2)[1]\to 0$. 
A consideration of long exact sequences of groups $\Ext$ shows that $\Ext^1(\OO_X(-1),F)=\Ext^1(\OO_X(-1),\OO_{Q_2}(-1))=0$, we obtain a morphism $F'=\OO_X(-1)^{\oplus4}\cong
\OO_X(-1)^{\oplus 3}\oplus\OO_X(-1)\to E$. By the Snake Lemma we find that this morphism is injective and $E/F'\cong\OO_X(-2)^{\oplus 2}[1].$
	
If $R=3$, then $\ch_{\le 2}(F)=(3,-2H,0)$ and $\ch_3(E/F)$ is maximized by the object
$\OO_X(-2)[1]$. The curve $\nu_{\alpha,\beta}(F)=0$ intersects the $\beta$-axis at the point
$\beta=-\frac 43$, therefore any semicircular wall for $F$ intersects the line $\beta=
-\frac 43$. We have $\ch_{\le2}^{-\frac 43}(F)=(3,2H,0)$, therefore there is a destabilizing subobject or a quotient $G$ of the object
$F$ with $\ch_1^{-\frac 43}(G)\in\{\frac 13H,\frac 23H,H\}$. In the first case we have $G\cong\OO_X(-1)$, in the second case $G\cong\OO_X(-1)^{
	\oplus2}$ or $G\cong\OO_X(-2)[1]$, and in the third $G\cong\OO_X(-1)^{\oplus 3}$. In any
case, we obtain the required inequality and the exact triple (\ref{triple3}).

Let us now assume that $x=0$. Then $R=2$, and we find $\ch_{\le 2}(F)=(2,-H,0)$
and $\ch_{\le 2}(E/F)=(0,H,-2H^2)$. Lemmas \ref{spinor} and \ref{G_d} give the required
inequality and exact triples (\ref{triple4}).
		
We have proven that $e\le 4$. For $e=4$, note that the exact triple (\ref{triple3}) defines
the wall $W_{\alpha,\beta}(E)=0$, so this wall is the smallest, and repeating the arguments from
the proof of \cite[Theorem 5.1]{Sch19} shows that any object
destabilized by this wall is given by the exact triple (\ref{triple3}). Any other
wall intersects the line $\beta=-1$ at a point with $\alpha>0$, therefore, when $R\ge 2$, the above argument applies for them. If $R=1$, then, due to the fact that
$W_{0,-1}(F)=6-3\ch_3(F)$ and $\ch_1^{-1}(F)$ is minimal, we find that $\ch_3(F)\le 2$ and
the wall $W_{\alpha,\beta}(F)=0$ is unique. This wall is given by the exact triple
$0\to\OO_X(-1)^{\oplus 2}\to F\to\OO_X(-2)[1]\to 0$, from which, together with a similar exact
triple for $E/F$, by the Snake Lemma we obtain a monomorphism $\OO_X(-1)^{\oplus 4}\hookrightarrow E$ and the exact triple (\ref{triple3}).
	
To prove the statement about the tilt stability of an object $E$ from the first exact triple in (\ref{triple4})
it is enough to check its tilt stability for $\beta=-1,\alpha>1$. Because
$\nu_{1,-1}(\SS(-1))=\nu_{1,-1}(\II_{\P^1,Q_2}(-1))$, the object $E$ is
$\nu_{1,-1}$-semistable. If $F$ is a subobject that destabilizes $E$ at $\beta=-1$,
then $\ch_1^{-1}(F)=H$ (the case $\ch_1^{-1}(F)=0$ is impossible, since in this case
$\nu_{1,-1}(F)=+\infty$, which contradicts $\nu_{1,-1}$-semistability of
$E$, and in the case $\ch_1^{-1}(F)=2H$ we have
$\nu_{\alpha,-1}(F)<\nu_{\alpha,-1}(E/F)=+\infty$, so $F$ does not destabilize $E$).
Repeating the argument carried out above in the proof of this Lemma, we obtain that either
$F\cong\SS(-1)$, or $E$ is included in an exact triple of the form (\ref{triple3}).
Direct calculation shows that $\SS(-1)$ does not destabilize $E$ when
$\beta=-1,\alpha>1$, and for any object $E$ from (\ref{triple3}) we have $\Hom(\SS(-1),E)=
\Hom(E,\II_{\P^1,Q_2}(-1))=0$, that is, $E$ cannot be included simultaneously in
(\ref{triple3}) and in the first exact triple (\ref{triple4}).
\end{proof}

\begin{lemma}\label{2,-1,-2} Let $E\in\Coh^\beta(X)$ be a tilt-semistable
	object with $\ch(E)=(2,-H,-H^2,e)$. Then $e\le\frac{19}{6}$, and if $e=\frac{19}{6}
	$, then $E$ is destabilized by an exact triple
	\begin{equation}\label{triple5}
		0\to\mathcal O_X(-1)^{\oplus 2}\to E\to \II_{\P^1, Q_2 }(-1)\to 0.
	\end{equation}
\end{lemma}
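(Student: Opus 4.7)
The approach mirrors the proofs of Lemmas~\ref{2,-1,-1} and~\ref{2,0,-4}. Using Proposition~\ref{Prop 2.1}(ii) with $(r,c,d)=(2,-1,-1)$, one has $c^{2}-2rd=5$, and a direct computation gives $W_{0,-3/2}(E)=\frac{73}{4}-6e$, which is negative whenever $e>\frac{73}{24}$, and in particular for $e\ge\frac{19}{6}$. Consequently $E$ must admit a semicircular wall crossing the line $\beta=-\frac{3}{2}$, produced by a destabilizing subobject (or quotient) $F$ in $\Coh^{-3/2}(X)$. Since $\ch_{\le 2}^{-3/2}(E)=(2,2H,-\frac{1}{4}H^{2})$, the only admissible values are $\ch_{1}^{-3/2}(F)\in\{\frac{1}{2}H,H,\frac{3}{2}H\}$.

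Write $\ch_{\le 2}^{-3/2}(F)=(R,aH,xH^{2})$. The wall equation $\nu_{\alpha,-3/2}(F)=\nu_{\alpha,-3/2}(E)$, the Bogomolov inequalities $\Delta(F),\Delta(E/F)\ge 0$, and the integrality requirement that $\ch_{1}(F)=(a-\frac{3R}{2})H$ be an integer multiple of $H$ (forcing $R$ even when $a=1$ and $R$ odd when $a\in\{\frac{1}{2},\frac{3}{2}\}$) together reduce the problem to a short explicit list of admissible triples $(R,a,x)$, after which one may also swap $F$ with $E/F$ to assume $R\ge 1$.

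The principal case is $a=1$, $R=2$: the combination $x\in-\frac{3}{4}+\frac{1}{2}\Z$, $x\le\frac{1}{4}$, and $x>-\frac{1}{8}$ forces $x=\frac{1}{4}$, hence $\ch_{\le 2}(F)=(2,-2H,H^{2})$ with $\Delta(F)=0$. A tilt-semistable object with these numerics must be isomorphic to $\OO_{X}(-1)^{\oplus 2}$ (two line bundles, saturating Bogomolov), so $\ch_{3}(F)=-\frac{2}{3}$. Then $\ch(E/F)=(0,H,-2H^{2},e+\frac{2}{3})$, and Lemma~\ref{G_d} applied with $d=-2$ gives $\ch_{3}(E/F)\le\frac{23}{6}$; thus $e\le\frac{19}{6}$, with equality if and only if $E/F\cong\II_{\P^{1},Q_{2}}(-1)$. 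This produces the triple~\eqref{triple5}.

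The main obstacle is to rule out, in the remaining cases, any weaker bound than $e\le\frac{19}{6}$. The trickiest subcase is $a=\frac{1}{2}$, $R=1$: Bogomolov then forces $F\cong\OO_{X}(-1)$ and $E/F$ has $\ch_{\le 2}(E/F)=(1,0,-\frac{3}{2}H^{2})$, where Remark~\ref{e le d(d+1)} on its own only gives $\ch_{3}(E/F)\le\frac{15}{4}$. Here I would note that $\OO_{X}(-1)$ and $\OO_{X}(-1)^{\oplus 2}$ have proportional Chern characters, so they define the same numerical wall, and any $E$ attaining the extremum $e=\frac{19}{6}$ along that wall must lift the destabilization $\OO_{X}(-1)\hookrightarrow E$ to a monomorphism $\OO_{X}(-1)^{\oplus 2}\hookrightarrow E$, returning the analysis to the principal case. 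The subcases with $R=3$ (for $a\in\{\frac{1}{2},\frac{3}{2}\}$) are handled, as in the proof of Lemma~\ref{2,0,-4}, by iterating the wall analysis on $F$ and invoking the bounds already established in Lemmas~\ref{spinor}--\ref{2,0,-4}.
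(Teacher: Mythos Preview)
Your approach is the same as the paper's: assume $e\ge\tfrac{19}{6}$, use $W_{0,-3/2}(E)<0$ to force a wall through $\beta=-\tfrac32$, and then run the case analysis on $(R,a,x)=\ch_{\le2}^{-3/2}(F)$. Your treatment of the principal case $a=1$, $R=2$, $x=\tfrac14$ is exactly the paper's argument and yields the sharp bound via Lemma~\ref{G_d}.

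Two small corrections on the auxiliary cases. First, your enumeration is slightly off: for $a=\tfrac12$ there is no admissible $R=3$ triple (the constraints $x>-\tfrac1{16}$, $\Delta(F)\ge0$, and $x\in -\tfrac{21}{8}+\tfrac12\Z$ leave no solution). The cases that do survive besides the principal one are $a=1$, $R=4$, $x=0$, and $a=\tfrac12$, $R=-1$, $x=-\tfrac18$ (equivalently its swap $a=\tfrac32$, $R=3$); the paper checks both and obtains $e\le\tfrac76$ in each, well below $\tfrac{19}{6}$.

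Second, on the subcase $a=\tfrac12$, $R=1$: your ``same wall'' observation is correct and is indeed the crux, but as stated it only addresses the equality case, not the bound itself. What makes the argument go through is that the rank-one quotient $E/\OO_X(-1)$, with $\ch_{\le2}=(1,0,-\tfrac32H^2)$, is tilt-semistable on the very same wall and (by the mechanism from the proof of \cite[Prop.~3.2]{MS18}, as used in Lemma~\ref{rank one}) admits a further destabilizing map $\OO_X(-1)\to E/\OO_X(-1)$. Composing gives $\OO_X(-1)^{\oplus2}\hookrightarrow E$ (using $\Ext^1(\OO_X(-1),\OO_X(-1))=0$), and the rank-zero quotient $(0,H,-2H^2,\cdot)$ is then bounded by Lemma~\ref{G_d}. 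This is what the paper means by ``and we also obtain the exact triple~\eqref{triple5}'' --- it is equally terse there, but the missing step is this second $\OO_X(-1)$, not merely the coincidence of numerical walls.
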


\begin{proof} Suppose that $e\ge\frac{19}{6}$. Then $W_{0,-\frac 32}(E)<0$, and
	we have $\ch_{\le 2}^{-\frac 32}(E)=(2,2H,-\frac 14H^2)$, therefore there is
	a destabilizing subobject or quotient $F$ of $E$ with $\ch_1^{-\frac 32}(F)\in
	\{\frac 12H,H\}$. Suppose that $\ch_{\le 2}^{-\frac 32}(F)=(R,H,xH^2)$; Then
	the equality $\nu_{\alpha,-\frac 32}(F)=\nu_{\alpha,-\frac 32}(E)$ implies $x+\frac 18
	=\alpha^2(R-1)$. Suppose that $R>1$, then $-\frac 18<x\le\frac 14$. If
	$x=0$, then $\Delta(E/F)\ge 0$, since Chern classes are integers it follows that $R=4$,
	whence we obtain in the usual way that $e\le\frac 76$. The case $x=\frac 18$ is impossible
	by numerical considerations, and if $x=\frac 14$, then $R=2$, and we get the required
	inequality and the exact triple (\ref{triple5}).

Suppose that $\ch_{\le 2}^{-\frac 32}(F)=(R,\frac 12H,xH^2)$. Then the equality
$\nu_{\alpha,-\frac 32}(F)=\nu_{\alpha,-\frac 32}(E)$ implies $2x+\frac 18=\alpha^2
(2R-1)$. For $R\ge 1$ there is only one possibility $R=1,\ x=\frac 18$, and then
$\ch_3(F)$ is maximized by the sheaf $F\cong\mathcal O_X(-1)$, and we also obtain the exact
triple (\ref{triple5}). For $R<1$ there is only one possibility $R=-1,x=-\frac
18$, from where $F\cong\OO_X(-2)[1]$. In this case we also get $e\le\frac 76$.
\end{proof}

Now we can proceed to the general case.

\begin{lemma}\label{our bound} Let $E\in\Coh^\beta(X)$ be a tilt-semistable
	object with $\ch(E)=(2,cH,dH^2,e)$. Assume that either
	\begin{enumerate}
		\item $c=-1,d\le -\frac 32$, and $e\ge d^2-2d+\frac 16$, or
		\item $c=0,d\le -\frac 52$, and $e\ge d^2.$
	\end{enumerate}
	Then $E$ is destabilized along a semicircular wall by a subobject or
	quotient of rank not more than two.
\end{lemma}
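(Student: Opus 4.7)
The plan is to establish the existence of an actual semicircular destabilizing wall for $E$ under the hypotheses via the Bogomolov-type inequality $W_{\alpha,\beta}(E)\ge 0$ from Proposition~\ref{Prop 2.1}(ii), and then to rule out destabilizing triples in which both $F$ and $G$ have absolute rank $\ge 3$ using the radius bound of Proposition~\ref{radius}.

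The first step is to show that the numerical assumptions force $W_{0,\beta}(E)<0$ on a nonempty open interval $I\subset\R$. In case~(1) one computes $W_{0,\beta}(E)=(1-4d)\beta^2+(6e+2d)\beta+(4d^2+3e)$; substituting the boundary value $e=d^2-2d+\frac16$, the discriminant of this quadratic in $\beta$ simplifies to $36d^4-8d^3-12d^2+12d-1$, which is strictly positive for all $d\le -3/2$, and further increasing $e$ only makes the discriminant larger. Case~(2) is entirely analogous: $W_{0,\beta}(E)=-4d\beta^2+6e\beta+4d^2$ has discriminant $36e^2+64d^3$, which is strictly positive at $e=d^2$ once $d\le -5/2$. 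By continuity, the open region $\{W_{\alpha,\beta}(E)<0\}$ in the upper half plane is a neighborhood of $\{0\}\times I$, and Proposition~\ref{Prop 2.1}(ii) forbids $E$ from being tilt-semistable at any such point.

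Since $E$ is tilt-semistable at its given $(\alpha,\beta)$ but not at points of the open set just described, $E$ is destabilized by an actual semicircular wall $W$ whose open disk covers $\{0\}\times I$ in the limit $\alpha\to 0^+$. Because $W$ is a semicircle centered on the $\beta$-axis, the requirement that its open disk contain the interval $I$ forces the diameter $2\rho_W$ to be at least $|I|$, so $\rho_W\ge|I|/2$; a direct estimate shows that $|I|/2$ grows like $\sqrt{\Delta(E)}$ (up to absolute constants), where $\Delta(E)=1-4d$ in case~(1) and $\Delta(E)=-4d$ in case~(2).

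Let $0\to F\to E\to G\to 0$ be the destabilizing triple in $\Coh^\beta(X)$ supported on $W$, and assume for contradiction that both $|\ch_0(F)|\ge 3$ and $|\ch_0(G)|\ge 3$. Since $\ch_0(F)+\ch_0(G)=2$, this forces either $\ch_0(F)\ge 5$ (with $\ch_0(G)\le -3$) or $\ch_0(F)\le -3$ (with $\ch_0(G)\ge 5$). In the first case, Proposition~\ref{radius} applied to the subobject $F\into E$ yields $\rho_W\le \Delta(E)/[4\ch_0(F)(\ch_0(F)-2)]\le\Delta(E)/60$; in the second case the same upper bound follows by applying Proposition~\ref{radius} to the quotient $E\onto G$. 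The main technical obstacle is then the arithmetic comparison $|I|/2>\Delta(E)/60$, a one-variable polynomial inequality in $d$ which is straightforward to verify once $d\le -3/2$ (case~1) or $d\le -5/2$ (case~2); this yields the desired contradiction, so one of $F$, $G$ has absolute rank at most two, as required.
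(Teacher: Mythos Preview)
Your overall strategy matches the paper's: show that the inequality $W_{\alpha,\beta}(E)\ge 0$ forces an actual semicircular wall, then invoke Proposition~\ref{radius} to bound the rank of the destabilizer. However, there are two genuine problems in the execution.

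First, you have set up the contradiction incorrectly. The conclusion of the lemma, as it is used in the proof of Theorem~\ref{Theorem 3.1}, is that the destabilizing subobject $F$ (after possibly exchanging $F$ and $G$) satisfies $\ch_0(F)\in\{0,1,2\}$. The negation of this is simply $\ch_0(F)\ge 3$ (equivalently $\ch_0(G)\le -1$), and Proposition~\ref{radius} then gives $\rho_W\le\Delta(E)/(4\cdot 3\cdot 1)=\Delta(E)/12$. Your hypothesis ``both $|\ch_0(F)|\ge 3$ and $|\ch_0(G)|\ge 3$'' is strictly stronger and yields only the threshold $\Delta(E)/60$; ruling out that case still leaves open the possibilities $(\ch_0(F),\ch_0(G))=(3,-1)$ and $(4,-2)$, which are precisely what the lemma must exclude for the induction in Theorem~\ref{Theorem 3.1} to go through.

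Second, your asymptotic claim that $|I|/2$ grows like $\sqrt{\Delta(E)}$ is wrong, and this is fatal against the correct threshold $\Delta(E)/12$. Since $W_{0,\beta}(E)$ is quadratic in $\beta$ with leading coefficient $\Delta(E)$, one has $|I|/2=\sqrt{\mathrm{disc}}/(2\Delta(E))$; at the boundary values of $e$ the discriminant grows like $36d^4$, so $\rho_W(E)=|I|/2$ is in fact \emph{linear} in $\Delta(E)$ (of order roughly $\tfrac{3}{16}\Delta(E)$), not of order $\sqrt{\Delta(E)}$. A $\sqrt{\Delta}$ lower bound would fail to dominate $\Delta/12$ once $\Delta>144$, i.e.\ already for moderate $|d|$. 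The paper sidesteps this by computing the ratio $\rho_W(E)/\Delta(E)$ directly as an explicit rational function of $d$ and checking term by term that it exceeds $\tfrac{1}{12}$ on the stated ranges; that is the computation you still need to carry out.
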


\begin{proof} (1) Suppose that $c=-1,\ d\le -\frac 32$, and $e\ge d^2-2d+\frac 16$. Then a calculation gives
	$$\frac{\rho_W(E)}{\Delta(E)}\ge -\frac{9}{64}d-\frac{19}{256}-\frac{9}{128(1- 4d)}
	-\frac{27}{64(1-4d)^2}+\frac{81}{256(1-4d)^3}.$$
	Bounding each term, we find $\frac{\rho_W(E)}{\Delta(E)}>\frac{1}{12} $, which means we can apply Proposition \ref{radius}.
	
	(2) Suppose that $c=0,\ d\le -\frac 52$, and $e\ge d^2.$ A calculation gives
	$\frac{\rho_W(E)}{\Delta(E)}\ge -\frac{9}{64}d-\frac{1}{4},$
	and applying Proposition \ref{radius} completes the proof.
\end{proof}

\begin{lemma}\label{rank one} Let $E\in\Coh^\beta(X)$ be a tilt-semistable
	object with $\ch(E)=(2,cH,dH^2,e)$.\\
	(1) Let $c=-1$ and $d\le-\tfrac 32$. If $E$ is destabilized by a subobject $F$ of
	rank one, then $e\le d^2-2d+\tfrac{5}{12}$. If $e=d^2-2d+\tfrac{5}{12}$ with
	non-integer $d$, or $e=d^2-2d+\tfrac 16$ with integer $d$, then $F\cong\OO_X(-1)$ and $E$ is
	destabilized by a subobject of rank two. \\
	(2) Let $c=0$ and $d\le-\tfrac 52$. If $E$ is destabilized by a subobject or
	quotient $F$ of rank one, then $e<d^2$.
\end{lemma}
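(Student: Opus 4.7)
The plan is to reduce to the rank-one bound of Remark \ref{e le d(d+1)} applied both to the destabilizer $F$ and to its complement. Without loss of generality $F \into E$ is a subobject: if $F$ is instead a quotient, then $\ker(E \onto F)$ is a rank-one subobject defining the same destabilizing wall. Setting $G := E/F$, both $F$ and $G$ are rank-one tilt-semistable objects in $\Coh^\beta(X)$ with $\Delta(F), \Delta(G) \ge 0$, and the wall-theorem \cite[Thm.~2.9(vi)]{Sch18} gives $\Delta(F) + \Delta(G) \le \Delta(E)$. Write $\mathrm{ch}(F) = (1, aH, bH^2, e_F)$ with $a \in \Z$ and $b \in \tfrac{1}{2}\Z$, so that $\mathrm{ch}(G) = (1, (c-a)H, (d-b)H^2, e - e_F)$.

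Twisting preserves tilt-semistability, so Remark \ref{e le d(d+1)} applied to $F(-a)$ and $G(a-c)$ gives
\begin{equation*}
\mathrm{ch}_3^a(F) \le d_F(d_F+1), \qquad \mathrm{ch}_3^{a-c}(G) \le d_G(d_G+1),
\end{equation*}
where $d_F := \tfrac{a^2}{2} - b \ge 0$ and $d_G := \tfrac{(c-a)^2}{2} - (d-b) \ge 0$ are the Bogomolov parameters of the twisted objects. Expanding via the formulas \eqref{c vers ch} and adding the two inequalities yields a bound of the form $e \le \Phi(a, b)$, polynomial in $(a,b)$ with coefficients depending on the fixed data $c, d$.

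The remaining step is a finite maximization. The inequalities $\Delta(F), \Delta(G) \ge 0$ together with $\Delta(F) + \Delta(G) \le c^2 - 4d$ confine $a$ to a bounded integer range and, for each such $a$, $b$ to a bounded half-integer range. Direct tabulation of $\Phi$ on this finite lattice against the asserted bounds $d^2 - 2d + \tfrac{5}{12}$ (for $c = -1$) and $d^2$ (for $c = 0$) gives the desired inequalities. In part (2), the half-integrality of $b$ together with the parity of $c = 0$ forces a strict gap, producing $e < d^2$. In part (1), the maximum is attained only at $(a, b) = (-1, 0)$, which forces $\mathrm{ch}(F) = \mathrm{ch}(\OO_X(-1))$; the fact that line bundles are tilt-semistable on their unique vertical wall (Proposition~\ref{Prop 2.1}, last assertion recalled before Lemma~\ref{big alpha}) then upgrades this to $F \cong \OO_X(-1)$. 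The Chern character of $G$ is thereby pinned down, and Lemma~\ref{G_d} together with a short $\Hom$-computation supplies a second injection $\OO_X(-1) \into E$, exhibiting the asserted rank-two destabilizer $\OO_X(-1)^{\oplus 2}$.

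The main obstacle will be the finite but delicate case enumeration in paragraph three: for each admissible integer $a$ and half-integer $b$ one must verify that $\Phi(a,b)$ lies strictly below the claimed bound except at $(-1, 0)$, and in part (2) that the bound $d^2$ is never attained. Once that is accomplished, the rigidification $F \cong \OO_X(-1)$ and the production of the second morphism $\OO_X(-1) \to E$ in the equality case are comparatively routine, as the target $G$ has Chern character fitting one of the classes already classified in Lemma~\ref{G_d}.
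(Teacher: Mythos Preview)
Your reduction to the rank-one bound of Remark~\ref{e le d(d+1)} on $F$ and $G$ is the right starting point, but the constraints you impose on $(a,b)$ are too weak, and the optimization does not close. Note first that $\Delta(F)+\Delta(G)=a^2+(c-a)^2-2d$ does not involve $b$ at all, so the sum constraint $\Delta(F)+\Delta(G)\le\Delta(E)$ only bounds $a$ (and even that bound grows like $\sqrt{-d}$, so the ``finite tabulation'' is not uniform in $d$). The individual constraints $\Delta(F)\ge0$, $\Delta(G)\ge0$ give $d-\tfrac{(c-a)^2}{2}\le b\le\tfrac{a^2}{2}$, a range of length $\tfrac{a^2+(c-a)^2}{2}-d$. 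On this range your function $\Phi(a,b)$ genuinely exceeds the target: for instance in part~(2) with $a=0$ one computes $\Phi(0,b)=2b^2-2bd+d^2-d$, whose maximum on $[d,0]$ is $d^2-d>d^2$; and in part~(1) with $a=-1$ the paper's own expression $2y^2+(2d+1)y+d^2-2d+\tfrac{5}{12}$ (where $y=\tfrac12-b$) is an upward parabola whose value at the endpoint $y=\tfrac12-d$ is $d^2-4d+\tfrac{17}{12}$, strictly larger than $d^2-2d+\tfrac{5}{12}$. So the asserted inequalities do not follow from your setup.

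What the paper adds is the wall geometry. Under the standing hypothesis $e\ge d^2-2d+\tfrac16$ (resp.\ $e\ge d^2$) one checks $W_{0,\beta_0}(E)<0$ at $\beta_0=-\tfrac32$ (resp.\ $\beta_0=-1$), so the destabilizing wall must cross $\beta=\beta_0$. There the inequality $0<H^2\cdot\ch_1^{\beta_0}(F)<H^2\cdot\ch_1^{\beta_0}(E)$ forces $\ch_1(F)\in\{-H,0\}$ in~(1) and $\ch_1(F)=0$ in~(2). In part~(2) this already finishes: a rank-one $F$ with $\ch_1(F)=0$ and $\mu(F)=\mu(E)$ defines no semicircular wall, so the hypothesis is vacuous and $e<d^2$. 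In part~(1) one then compares wall centres, $s(E,F)\le s_W(E)$, which cuts the admissible $y$ down to $[0,-\tfrac d2-\tfrac14)$; on that half-interval the parabola is decreasing and its maximum is at $y=0$, giving exactly $d^2-2d+\tfrac{5}{12}$. Finally, your appeal to Lemma~\ref{G_d} for the rank-two destabilizer is misplaced: that lemma concerns objects with $\ch_0=0$, whereas $G=E/F$ has rank one. The paper instead invokes \cite[Prop.~3.2]{MS18} to produce a map $\OO_X(-1)\to G$, and only then applies the $\ch_0=0$ classification to the resulting quotient $G'$, after which an $\Ext^1$-vanishing confirms that $\OO_X(-1)^{\oplus2}\hookrightarrow E$.
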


\begin{proof} (1) Suppose that $c=-1,\ d\le-\tfrac 32$ and $e\ge d^2-2d+\tfrac 16$. 
	A calculation gives $W_{0,-\frac 32}(E)=4d^2-6d+\frac 94-6e\le -2d^2+6d+\frac 54<0$, and therefore
	$$0<H^2\cdot\ch_1^{-\frac 32}(F)<H^2\cdot\ch_1^{-\frac 32}(E)=2.$$
	
	Hence $\ch_1(F)\in\{-H,0\}$. Assume that $\ch_1(F)=-H$. Let us show that
	$F$ is a subobject, not a quotient, of the object $E$. We have $\ch(F)=
	(1,0,-yH^2,z)\cdot\ch(\mathcal O_X(-1))$, and Remark \ref{e le d(d+1)}
	shows that $z\le y(y+1)$. A calculation gives
	$$s_W(E)=\frac{d+3e}{4d-1}\le\frac{6d^2-10d+1}{8d-2},\ \ \ \ s(E,F)=d +2y-1.$$
	Since $s(E,F)\le s_W(E)$, we have
	$$y\le\frac{d^2+1}{2-8d}<-\frac d2-\frac 14.$$

Using Remark \ref{e le d(d+1)} for the quotient $E/F$, we find
$$e\le(\frac 12-d-y)(\frac 32-d-y)+y(y+1)+2y-\frac 13.$$

The right-hand side of this inequality is a quadratic function of $y$ with a minimum at the point
$y=-\frac d2-\frac 14$. Therefore, the maximum is achieved at the point $y=0$ with $F\cong\OO_Q
(-1)$, and we get $e\le d^2-2d+\tfrac{5}{12}$. Note also that for $y>0$
we have $e\le d^2-d+\frac{17}{12}<d^2-2d+\frac 16$, so for the integer $d$ we also find that $y=0$. Now from the proof of \cite[Prop. 3.2]{MS18} it follows that
there is a destabilizing morphism $\OO_X(-1)\to G=E/F$, and the quotient has the
Chern character $\ch_{\le 2}(G')=(0,H,(d-1)H^2)$. The argument from the proofs of Lemmas
\ref{2,0,-3} and \ref{2,0,-4} shows that for the non-integer $d$ the character $\ch_3(G')$
is maximized by the sheaf $G'\cong\OO_{Q_2}(d-\frac 12)$, while for the integer $d$
there is an exact triple
$$0\to \SS(d-2)\to\OO_X(d-1)^{\oplus 2}\to G'\to 0.$$
A calculation shows that in any case $\Ext^1(F,G)=0$, so $F$ is indeed
a subobject of $E$ and there is a destabilizing morphism $\OO_X(-1)^{\oplus 2}\to E$.

(2) Suppose that $c=0,\ d\le-\frac 52$ and $e\ge d^2$. By calculation we get
$W_{0,-1}(E)=4d^2-4d-6e\le-2d^2-4d<0,$ and therefore
$$0<H^2\cdot\ch_1^{-1}(F)<H^2\cdot\ch_1^{-1}(E)=2.$$
This means that $\ch_1(F)=0$, but such $F$ does not define a semicircular wall.
\end{proof}

\textit{Proof of Theorem \ref{Theorem 3.1}.} The inequalities on $d$ follow from
the first inequality in Proposition \ref{Prop 2.1}.(ii). Other statements of the Theorem are proved by induction on $\Delta(E)$, and the start of the induction is given by Lemmas
\ref{trivial} -- \ref{2,-1,-2}. Lemma \ref{our bound} shows that $E$
is destabilized by a subobject $F\into E$, where $F$ has rank $R\in\{0,1,2\}$. Lemma
\ref{rank one} deals with the case $R=1$. Let us assume that $R=2$. In what follows we
will show that the case $R=0$ is realized only if $E$ is a direct sum.

(1) Suppose that $c=-1,\ d\le-\frac 32,$ and $e\ge d^2-2d+\frac 16$. As in
the proof of Lemma \ref{rank one}, we have $W_{0,-\frac 32}(E)<0$, and also
$W_{0,-2}(E)<0$. Then the inequalities $H^2\cdot\ch_1^{-3/2}>0$ and
$H^2\cdot\ch_1^{-2}(F)<H^2\cdot\ch_1^{-2}(E)$ together imply that $\ch_1(F)=-2H$.
Therefore, we can assume that $\ch(F)=(2,0,yH^2,z)\cdot\ch(\mathcal O_X(-1))$.
If $y=0$, then $z\le 0$, and the arguments from the proof of Lemma \ref{rank one} give the
required statement. Let us assume that $y\le-\frac 12$. By the induction hypothesis
we have $z\le y^2$. A calculation gives:
$$s_W(E)=\frac{d+3e}{4d-1}\le\frac{6d^2-10d+1}{8d-2},\ \ \ \ s(E,F)=d-y -1.$$
Since $s(E,F)\le s_W(E),$ then $y\ge\frac{2d^2+1}{8d-2}>\frac d2.$ Maximizing
$\ch_3(E/F)$, we get
$$e\le (d-y)^2-2(d-y)+y^2-2y+C,$$
where $C$ is a constant depending on whether $d$ and $y$ are integers or
non-integers. For $y\in\Z$ or $y\in\frac 12+\Z$ the right hand side of the inequality is
a quadratic function of $y$ with the minimum at the point $y=\frac d2$. Therefore
the maximum is achieved at $y=-\frac 12$ or $y=-1$. For $y=-\frac 12$ by calculation
we get $e\le d^2-d+\frac{11}{12}<d^2-2d+\frac 16$, and for $y=-1$ we find $e\le
d^2+\frac{29}{12}<d^2-2d+\frac 16$.
	
(2) Suppose that $c=0,\ d\le-\frac 52$ and $e\ge d^2$. As in the proof
of Lemma \ref{rank one}, we have $W_{0,-1}(E)<0,$ which implies that $\ch_1(F)=-H$.
Suppose $\ch(F)=(2,-H,yH^2,z)$. By the induction hypothesis we have
$z\le y^2-2y+\frac{5}{12}.$ A calculation gives:
$$s_W(E)=\frac{3e}{4d}\le\frac 34d,\ \ \ \ s(E,F)=d-y,$$
hence $y\ge\frac d4>\frac d2+\frac 12.$ Maximizing $\ch_3(E/F)$, we find
$$e\le 2y^2-(2d+2)y+C,$$
where $C$ is a constant depending on whether $d$ and $y$ are integers or
non-integers. For $y\in\Z$ or $y\in\frac 12+\Z$ the right hand side of the inequality is
a quadratic function of $y$ with the minimum at the point $y=\frac d2+\frac 12$. Therefore
the maximum is achieved at $y=0$ or $y=-\frac 12$. For $y=-\frac 12$ we get
$e\le d^2+d+2<d^2$, which implies that $y=0$. It remains to note that
the equality $\Ext^1(F,E/F)=0$ holds, verified by the exact triples defining $F$ and $E/F$.

(3) and (4). A direct consequence of the statements (1) and (2), respectively.
\qed

\vspace{1cm}

\section{Infinite series of components of moduli spaces of semistable sheaves of rank two on varieties $X_1$, $X_2$, $X_4$ and $X_5$}

\subsection{First series.}\label{subsec 4.1}
In this susection we consider extensions of one of the following two
forms. \\
(1) Extensions on $X=X_1,\ X_2,\ X_4$ or $X_5$ of the form
\begin{equation}\label{extension}
	\begin{split}
		& 0\to\OO_X(-n)^{\oplus2}\to E\to\OO_S(m)\to 0,\ \ \ \ \ \ \ \ \ \ \ S\in|\OO_X(k)| ,\\
		& \text{where} \ \ \ \ \ \ k\ge1, \ \ \ \ \ n=\lceil\frac k2\rceil,\ \ \ \ \ \ m+n<0,
	\end{split}
\end{equation}
(2) or an extension on the quadric $X=X_2$ of the form
\begin{equation}\label{extension I}
	\begin{split}
		& 0\to\OO_X(-1)^{\oplus 2}\to E\to\II(m)\to 0,\ \ \ \ \ \ \ \ \ \ \
		S\in|\OO_X(1)|,\\
		& \text{where} \ \ \ \ \ \II=\II_{\P^1,S},\ \ \ \ \ \ m<0,
	\end{split}
\end{equation}
$\P^1$ is a projective line on a two-dimensional quadric $S$, and the sheaf $\II$ is
described in \eqref{spin tr on X}.
This section mainly deals with extensions \eqref{extension}. In particular, for $E$ from \eqref{extension} we have:
\begin{equation}\label{k even}
	\begin{split}
		& \rk(E)=2, \ \ \ c_1(E)=0,\ \ \ c_2(E)=(\frac{k^2}{4}-km)H^2,\\
		& c_3(E)=(\frac{k^3}{4}-k^2m+km^2)H^3,\ \ \ \ \ \ \
		\text{if}\ \ \ k\ \ \ \text{is even},
	\end{split}
\end{equation}
\begin{equation}\label{k odd}
	\begin{split}
		& c_1(E)=-H,\ \ \ c_2(E)=(\frac{(k-1)^2}{4}-km)H^2,\ \ \ c_3(E)=\\
		& (\frac{k^3}{4}-k^2m+km^2)H^3,\ \ \ \ \ \ \text{if}\ \ \ k\ \ \ \text{is odd},
	\end{split}
\end{equation}
where $H$ is the cohomology class of a hyperplane section $X$.

It is well known and verified by standard calculation that
\begin{equation}\label{zero cohom}
	h^i(\OO_X(\ell))=0\; \ \ \ \ \ \ \ \ \ \ell\in\Z,\ \ i=1,2.
\end{equation}
Let us now assume that
\begin{equation}\label{E stable}
	\textit{the sheaf}\ E\ \textit{in}\ \eqref{extension}\ \textit{is stable}.
\end{equation}
In this case, from \eqref{spin tr on X}, \eqref{extension},
\eqref{extension I}, \eqref{zero cohom}, the exact triples
\begin{equation}\label{tr X,S}
	0\to\OO_X(-k)\to\OO_X\to\OO_S\to0,\ \ \ \ \ \ \ \
\end{equation}
\begin{equation}\label{tr I,S}
	\ \ \ \ \ \ \ 0\to\II\to\OO_S\to\OO_{\P^1}\to0,\ \ \ \ \ \ \ \ \ \ \ X=X_2,\ \ \ \ \ \ \ \ ,
\end{equation}
the Serre--Grothendieck duality and the stability of $E$ we get the equalities\\
(a) for the  extension \eqref{extension}:
\begin{equation}\label{vanish ext0}
	\begin{split}
		& \hom(E,E)=\hom(\OO_S(m),\OO_S(m))=\hom(E,\OO_S(m))=1,\\
		& \rmext^1(\OO_S(m),\OO_S(m))=N,
	\end{split}
\end{equation}
\begin{equation}\label{vanish ext}
	\begin{split}
		& \hom(\OO_X(-n)^{\oplus 2},\OO_S(m))=0,\\
		& \rmext^1(\OO_X(-n)^{\oplus 2},E)=\rmext^1(\OO_X(-n)
		^{\oplus 2},E)=\rmext^1(\OO_X(-n)^{\oplus 2},\OO_S(m))=\\
		& \rmext^1(\OO_X(-n)^{\oplus 2},\OO_X(-n)^{\oplus 2})=
		\rmext^i(\OO_S(m),\OO_X(-n)^{\oplus 2})=0,\ \ i=0.2.
	\end{split}
\end{equation}
(b) for the extension \eqref{extension I}:
\begin{equation}\label{vanish ext0I}
	\begin{split}
		& \hom(E,E)=\hom(\OO_S(m),\OO_S(m))=\hom(E,\OO_S(m))=1,\\
		& \rmext^1(\OO_S(m),\OO_S(m))=4,
	\end{split}
\end{equation}
\begin{equation}\label{vanish ext I}
	\begin{split}
		& \hom(\II(m),\II(m))=\hom(E,\II(m))=1,\ \ \hom(\OO_X(-n)^{\oplus2},\II (m))=0,\\
		& \rmext^i(\OO_X(-1)^{\oplus2},\II(m))=0,\ i=0,1,
	\end{split}
\end{equation}
\begin{equation}\label{vanish ext I2}
	\hom(\OO_X(-1)^{\oplus2},\II(1))=4,\ \ \ \hom(\SS(-1),\II(1))=7,\ \ \ \
	\rmext^1(\OO_X^{\oplus2},\II(1))=0,
\end{equation}
\begin{equation}\label{vanish ext3}
\begin{split}
& \hom(\OO_S,\OO_{\P^1})=1,\ \ \ \ \ \rmext^1(\OO_S,\OO_{\P^1})=k+1,\ \ \ \ \
\rmext^1(\OO_S,\OO_S)=N,\\
& \rmext^i(\II(m),\OO_X(-n)^{\oplus 2})=0,\ \ \ \ \ i=0,2.
\end{split}
\end{equation}
Let $\P^N=|\OO_X(k)|$, let $\mathbb{S}\subset\P^N\times X$ be the universal
family of surfaces of degree $k$ in $X$ and let $\P^N\times X\xrightarrow{p}
\P^N$ be the projection. Respectively, in the notation of the triple
\eqref{univ on XxG}, let $\mathbb{G}\times X_2\xrightarrow{\widetilde{p}}\mathbb{G}$ be the projection.
Due to the smoothness of $\P^N$ and $\mathbb{G}$, from
\eqref{vanish ext}-\eqref{vanish ext3}, \cite[Thm. 1.4]{Lan83},
\cite[Satz 3.(ii)]{BPS} there are defined locally free sheaves $\AA$ and $\widetilde{\AA}$ on $\P^N$ and $\mathbb{G}$, respectively:
\begin{equation}\label{sheaf A}
\begin{split}
& \AA=\EE xt_p^1(\OO_{\P^N}\boxtimes\OO_X(m)|_{\mathbb{S}},\OO_{\P^N}
\boxtimes\OO_X(-n)),\\
& \widetilde{\AA}=\EE xt_{\widetilde{p}}^1(\mathbb{I}\otimes\OO_{
\mathbb{G}}\boxtimes\OO_{X_2}(m),\OO_{\mathbb{G}}\boxtimes\OO_{X_2}
(-1)).
\end{split}
\end{equation}
The construction of these sheaves commutes with the base change:
\begin{equation}\label{bch}
	\begin{split}
		& \tau_S:\ \AA\otimes\bk_{S}\xrightarrow{\cong}\Ext^1(\OO_S(m),\OO_X
		(-n)),\ \ \ \ \ \ \ \ \ \ \ S\in|\OO_X(k)|,\\
		& \widetilde{\tau}_x:\ \widetilde{\AA}\otimes\bk_x\xrightarrow{\cong}
		\Ext^1(\II(m),\OO_X(-1)),\ \ \ \ \ \ \ \ \ \ \ \ \ \ \ x\in\mathbb{G},
		\ \ \ \ \ \ S=S_x,
	\end{split}
\end{equation}
and we have
\begin{equation}\label{Ext0=0}
	\begin{split}
		& \EE xt_p^0(\OO_{\P^N}\boxtimes\OO_X(m)|_{\mathbb{S}},\OO_{\P^N}
		\boxtimes\OO_X(-n))=0,\\
		& \EE xt_{\widetilde{p}}^0(\mathbb{I}\otimes\OO_{\mathbb{G}}\boxtimes
		\OO_{X_2}(m),\OO_{\mathbb{G}}\boxtimes\OO_{X_2}(-1))=0.
	\end{split}
\end{equation}
Consider the varieties $P=\P(\AA^{\vee})$, $X_P=P\times X$,
$\widetilde{P}=\P(\widetilde{\AA}^{\vee})$, $X_{\widetilde{P}}=
\widetilde{P}\times X_2$, with the projections $P\xrightarrow{ \rho}\P^N$, $\widetilde{P}\xrightarrow{\widetilde{\rho}}\mathbb{G}$, $P\xleftarrow{p_P}X_P\xrightarrow{\rho_P}\P^N\times X$, $\widetilde{P}\xleftarrow{
	\widetilde{p}_P}X_{\widetilde{P}}\xrightarrow{\widetilde{\rho}_P}
\mathbb{G}\times X_2$. From \eqref{bch}, \eqref{Ext0=0} and
\cite[Cor. 4.5]{Lan83} it follows that on $X_P$ and $X_{\widetilde{P}}$
there are universal families of extensions, respectively:
\begin{equation*}
	\begin{split}
		& 0\to p_P^*\OO_P(1)\otimes(\OO_{\P^N}\boxtimes\OO_X(-n))\to\VV\to
		\rho_P^*(\OO_{\P^N}\boxtimes\OO_X(m)|_{\mathbb{S}})\to0,\\
		& 0\to\widetilde{p}_P^*\OO_{\widetilde{P}}(1)\otimes(\OO_{\mathbb{G}}
		\boxtimes\OO_{X_2}(-1))\to\widetilde{\VV}\to\widetilde{\rho}_P^*(\mathbb{I}\otimes
		\OO_{\mathbb{G}}\boxtimes\OO_{X_2}(m))\to0.
	\end{split}
\end{equation*}
Applying the functors $\rho_{P*}$ and $\widetilde{\rho}_{P*}$ to these triples, respectively, we obtain the universal extensions on $\P^N\times X$ and on $\mathbb{G}\times X$, where $\WW=\rho_{P*}\VV$ and $\widetilde{\WW}=\widetilde {\rho}_{P*}\widetilde{\VV}$:
\begin{equation}\label{univ ext}
	\begin{split}
		& 0\to p^*\AA^{\vee}\otimes(\OO_{\P^N}\boxtimes\OO_X(-n))\to\WW\to\OO_{\P^N}\boxtimes
		\OO_X(m)|_{\mathbb{S}}\to0,\\
		& 0\to\widetilde{p}^*\widetilde{\AA}^{\vee}\otimes(\OO_{\mathbb{G}}\boxtimes\OO_{X_2}(-1))
		\to\widetilde{\WW}\to\mathbb{I}\otimes\OO_{\mathbb{G}}\boxtimes\OO_{X_2}(m)\to0.
	\end{split}
\end{equation}

Note that the extensions \eqref{extension} and \eqref{extension I} are specified by elements $\xi\in\Ext^1(\OO_S(m), \\ \OO_X(-n)^{\oplus 2})$ and $\widetilde{\xi}\in\Ext^1(\II
(m),\OO_{X_2}(-1)^{\oplus 2})$ respectively, considered as \\ homomorphisms
\begin{equation}\label{hom xi}
	\xi:\ \Ext^1(\OO_S(m),\OO_X(-n))^{\vee}\to\bk^2,\ \ \
	\widetilde{\xi}:\ \Ext^1(\II(m),\OO_{X_2}(-1))^{\vee}\to\bk^2.
\end{equation}
We will carry out further argument in this subsection for the upper 
exact triple \eqref{univ ext}.
(For the lower triple \eqref{univ ext} the argument is completely 
similar --- see subsection \ref{subsec 4.2}.) By the universality, 
this triple, restricted to $\{S\}\times X$, in view of the base 
change \eqref{bch} is included, together with the extension 
\eqref{extension}, where $E=E_{\xi}$, into a commutative pushout 
diagram
\begin{equation}\label{diag 83}
\begin{gathered}
\xymatrix{& \Ext^1(\OO_S(m),\OO_X(-n))^{\vee}\otimes\OO_X(-n)\ \ar@{>->}[r] \ar[d]_-{\xi} & \WW|_{\{S\}\times X}\ar@{>>}[r]\ar[d] & \OO_S(m) \ar@{=}[d] \\
& \OO_X(-n)^{\oplus 2}\ \ar@{>->}[r] & E_{\xi}\ar@{>>}[r] & \OO_S(m). }
\end{gathered}
\end{equation}
Note that the homomorphism $\xi$ in \eqref{hom xi} is non-zero, since the extension
\eqref{extension} is non-trivial. Moreover, if $E_{\xi}$ is stable, then $\xi$ is
an epimorphism. Indeed, if $\xi$ has rank 1, then the diagram \eqref{diag 83} goes through a subextension $0\to\OO_X(-n)\to E'\to\OO_S(m)\to0$ of the extension
\eqref{extension}, which implies the existence of a homomorphism $E_{\xi}\twoheadrightarrow
\OO_X(-n)$, contrary to the stability of $E_{\xi}$.

Suppose that $\xi$ in \eqref{diag 83} is an epimorphism. The class $[\xi]$ of the epimorphism $\xi$
modulo automorphisms of the space $\bk^2$ is a point of the grassmannian \\
$\mathrm{Gr}(\Ext^1(\OO_S(m),\OO_X(-n))^{\vee},2)$. Therefore, let us consider the grassmannization
\begin{equation}\label{grasmn}
	\pi:\ {\GG}r(\AA^{\vee},2)\to\P^N
\end{equation}
of two-dimensional quotient spaces of the bundle $\AA^{\vee}$ on $\P^N$ and its open subsets
\begin{equation}\label{def Y}
	\YY_{k,m,n}:=\{[\xi]\in{\GG}r(\AA^{\vee},2)\ |\ E_{\xi}\ \textit{is stable}\},
\end{equation}
\begin{equation}\label{def RR}
\RR_{k,m,n}:=\{[\xi]\in{\GG}r(\AA^{\vee},2)\ |\ E_{\xi}\ \textit{is reflexive}\}.
\end{equation}

\begin{theorem}\label{Theorem 4.9}
	There are embeddings of dense open sets
	\begin{equation}\label{open subsets}
		\RR_{k,m,n}\subset\YY_{k,m,n}\subset{\GG}r(\AA^{\vee},2).
	\end{equation}
\end{theorem}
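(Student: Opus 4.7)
The ambient variety $\GG r(\AA^\vee,2)$ is irreducible, being a Grassmann bundle of $2$-quotients of the locally free sheaf $\AA^\vee$ over the irreducible projective space $\P^N=|\OO_X(k)|$, so every nonempty open subset is automatically dense. The plan therefore reduces to two tasks: first, to verify that $\YY_{k,m,n}$ is open in $\GG r(\AA^\vee,2)$ and that $\RR_{k,m,n}$ is open in $\YY_{k,m,n}$; second, to exhibit at least one $[\xi]\in\RR_{k,m,n}$, which by irreducibility then gives density of both loci.

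For openness, I would pull back the universal extension \eqref{univ ext} along $\pi\colon\GG r(\AA^\vee,2)\to\P^N$ and twist by the tautological $2$-quotient of $\pi^*\AA^\vee$, producing a flat family $\EE$ on $\GG r(\AA^\vee,2)\times X$ whose fibre over $[\xi]$ is the extension $E_\xi$ from \eqref{diag 83}. Gieseker-stability is open in any such flat family by the Maruyama openness theorem, giving openness of $\YY_{k,m,n}$. Reflexivity of a torsion-free sheaf on a smooth threefold is equivalent to $\codim_X\Sing(E_\xi)\ge 3$; since torsion-freeness is an open condition and $\dim\Sing(E_\xi)$ is upper semi-continuous along the base of a flat family, $\RR_{k,m,n}$ is open in $\YY_{k,m,n}$.

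For nonemptiness, I would produce a single $\xi$ such that $E_\xi$ is simultaneously stable and reflexive. Choose $\xi=(\xi_1,\xi_2)$ with $\xi_1,\xi_2\in\Ext^1(\OO_S(m),\OO_X(-n))$ linearly independent and generic. A local computation on $S$, using the base-change isomorphism \eqref{bch} to identify the relevant local $\lExt^1$ as an invertible $\OO_S$-module, shows that the non-locally-free locus $\Sing(E_\xi)$ coincides with the common zero scheme on $S$ of $\xi_1$ and $\xi_2$ viewed as sections of that line bundle; for generic $\xi$ this common zero scheme is $0$-dimensional, so $E_\xi$ is reflexive. Stability of $E_\xi$ then reduces to a standard slope check: a potential destabilizing rank-one subsheaf $L\into E_\xi$ would decompose via $K=L\cap\OO_X(-n)^{\oplus 2}$ into a rank-one subsheaf of $\OO_X(-n)^{\oplus 2}$ of slope $\le-n$ and a torsion quotient $L/K$ mapping into $\OO_S(m)$, and the hypothesis $m<-n$ combined with $n=\lceil k/2\rceil\ge 1$ rules out any such $L$ with $\mu(L)\ge\mu(E_\xi)$ in both the $c_1=0$ and $c_1=-H$ cases described by \eqref{k even}--\eqref{k odd}.

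\textbf{Main obstacle.} The delicate step is the nonemptiness claim, since one must arrange simultaneously the codimension-three condition on $\Sing(E_\xi)$ needed for reflexivity and the slope condition needed for Gieseker-stability. Both are generic conditions on $\GG r(\AA^\vee,2)$ in isolation, but combining them cleanly requires a dimension count verifying that the ``non-reflexive'' and the ``unstable'' loci each have positive codimension. The main technical input is the identification of $\Sing(E_\xi)$ as a degeneracy scheme of $(\xi_1,\xi_2)$, which reduces the local problem to a Fitting-ideal calculation on $S$.
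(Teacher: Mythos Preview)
Your openness argument and the reflexivity computation are fine and match the paper's approach. There are, however, two genuine gaps.

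\textbf{The inclusion $\RR_{k,m,n}\subset\YY_{k,m,n}$ is not proved.} The theorem asserts a chain of inclusions, so you must show that \emph{every} reflexive $E_\xi$ is stable. Your strategy only exhibits a single $[\xi]$ lying in $\RR_{k,m,n}\cap\YY_{k,m,n}$, which yields density of each set in the irreducible ambient variety but not the containment $\RR_{k,m,n}\subset\YY_{k,m,n}$.

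\textbf{The slope decomposition for stability is incomplete.} Given a rank-one subsheaf $L\hookrightarrow E_\xi$ with $K=L\cap\OO_X(-n)^{\oplus2}$, the torsion quotient $L/K\hookrightarrow\OO_S(m)$ may well have support equal to all of $S$, in which case $c_1(L/K)=[S]=kH$ as an $\OO_X$-sheaf (this class is independent of $m$). Then $c_1(L)\le(k-n)H=\lfloor k/2\rfloor H$, which is nonnegative, so the hypothesis $m<-n$ does not by itself rule out such $L$. One would need a further argument (e.g.\ that surjectivity of $L\to\OO_S(m)$ forces the extension class $\xi$ to factor through a rank-one subbundle of $\OO_X(-n)^{\oplus2}$) which you do not supply.

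The paper repairs both issues simultaneously with a short observation: for any $[\xi]\in\RR_{k,m,n}$ one has $h^0(E_\xi)=0$ directly from the extension \eqref{extension} (since $h^0(\OO_X(-n))=h^0(\OO_S(m))=0$), and a destabilizing rank-one subsheaf $F$ must satisfy $c_1(F)=qH$ with $q\ge0$ because $-1\le c_1(E_\xi)\le0$. Reflexivity of $E_\xi$ then allows one to pass to $F^{\vee\vee}\cong\OO_X(q)\hookrightarrow E_\xi^{\vee\vee}=E_\xi$, producing a nonzero section of $E_\xi(-q)$ and hence of $E_\xi$, a contradiction. This argument uses reflexivity in an essential way and proves the full inclusion $\RR_{k,m,n}\subset\YY_{k,m,n}$, not just nonemptiness of the intersection.
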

\begin{proof}
(1) First, let us check the reflexivity of a general sheaf $E$ in the triple \eqref{extension}. 
(In the remaining cases the proofs are completely similar.) Namely, applying to the triple
\eqref{extension} the functor $\mathbf{R}\HH om(-,\OO_X)$ and taking 
into account that $\EE xt^1(\OO_S(m),\OO_X) \\ \cong\OO_S(k-m)$, we 
obtain the exact sequence
	\begin{equation}\label{E, tau}
		0\to E^{\vee}\to\OO_X(n)^{\oplus2}\to\OO_S(m)\to\tau\to0,\ \ \ \ \ \ \ \ \
		\tau:=\EE xt^1(E,\OO_X).
	\end{equation}
The long exact sequence defined by the spectral sequence of
local and global Exts $E^{pq}_2=H^p(\EE xt_{\OO_X}^q(\OO_S(m), \OO_X(-n)^{\oplus2}))\Longrightarrow\Ext^{p+q}(\OO_S(m),\OO_X(-n) ^{\oplus2})$, gives an isomorphism
\begin{equation}\label{isom Ext}
\begin{split}
& \Ext^1(\OO_S(m),\OO_X(-n)^{\oplus2}))\xrightarrow{\cong}H^0(\EE xt^1(\OO_S(m),\OO_X(-n)^{\oplus2}))) \\	
& \cong H^0(\OO_S(k-m-n)^{\oplus2}).
\end{split}
\end{equation}
Under this isomorphism the element $\xi\in\Ext^1(\OO_S(m),\OO_X(-n) ^{\oplus2}))$, defining
the extension \eqref{extension} corresponds to a section $s_{\xi}\in H^0(\OO_S(k-m-n)^{\oplus2})$,
scheme of zeros $Z_{\xi}=(s_{\xi})_0$ of which, due to the ampleness of the sheaf $\OO_S(k-m-n)$ for
$k\ge1,\ m+n<0$, is zero-dimensional for $\xi$ belonging to an open dense subset in
$\Ext^1(\OO_S(m),\OO_X(-n)^{\oplus2}))$.
A standard local calculation shows that $\Sing E=Z_{\xi}$, so $\tau$
is an artinian sheaf and thus $\EE xt^i(\tau,\OO_X)=0$ for $i\le2$.
Therefore, applying the functor $\mathbf{R}\HH om(-,\OO_X)$ to \eqref{E, tau} and taking into account
the isomorphism $\EE xt^1(\OO_S(k-m),\OO_X)\cong\OO_S(m)$, we obtain an exact triple
$0\to\OO_X(-n)^{\oplus 2}\to E^{\vee\vee}\to\OO_S(m)\to 0$ and its isomorphism with the triple
\eqref{extension} via the functor $(-)^{\vee\vee}$. This implies the required
isomorphism $E\cong E^{\vee\vee}$.

(2) Let us prove the stability of an arbitrary sheaf $E\in\RR_{k,m,n}$. Recall that in
\eqref{extension} $k\ge1$, $n=\lceil\frac k2\rceil\ge1$, $m<-n\le1$. This implies,
What
\begin{equation}\label{c1(E)}
	-1\le c_1(E)\le0,
\end{equation}
\begin{equation}\label{h0(E)=0}
	h^0(E)=0.
\end{equation}
Assume that $E$ is unstable. In view of \eqref{c1(E)} the maximal destabilizing
rank 1 subsheaf $F$ of $E$ has the form $F=\II_{Z,\P^3}(q),\ q\ge0$. By virtue of
reflexivity of $E$, the monomorphism $F\to E$ extends to the monomorphism $\OO_{\P^3}(q) \cong
F^{\vee\vee}\overset{s}{\to}E$, that is, a section $0\ne s\in h^0(E(-q)) $. This contradicts
\eqref{h0(E)=0}, since $q\ge0$.
\end{proof}

Consider the modular morphism of the variety $\YY_{k,m,n}$ into the Gieseker-Maruyama moduli scheme
$M_X(v)$:
\begin{equation}\label{def Theta}
	\Theta:\ \YY_{k,m,n}\to M_X(v),\ \ \ [\xi]\mapsto[E_{\xi}],
\end{equation}
where $[E_{\xi}]$ is the isomorphism class of the stable sheaf $E_{\xi}$ obtained from
the point $\xi$ as the lower extension in the diagram \eqref{diag 83}, and $v=\mathrm{ch}(E)$
is determined by formulas \eqref{c vers ch} and
\eqref{k even}-\eqref{k odd}.

Note that by virtue of the first equality \eqref{vanish ext} the sheaf $[E=E_{[\xi]}]\in \Theta(
\YY_{k,m,n})$ has a unique subsheaf $\OO_X(-n)^{\oplus2}$, and hence the extension
\eqref{extension} is uniquely defined, that is, the class $[\xi]$ can be reconstructed from the point $[E]$ uniquely. Therefore,
\begin{equation}\label{Theta bijective}
\Theta:\ \YY_{k,m,n}\to M_{k,m,n}:=\Theta(\YY_{k,m,n})\ - \ \text{bijection},
\end{equation}
\begin{equation}\label{dim=dim}
	\begin{split}
		& \dim M_{k,m,n}=\dim\YY_{k,m,n}=\dim\mathrm{Gr}(\Ext^1(\OO_S(m),
		\OO_X(-n))^{\vee},2)+\\
		& +\dim|\OO_X(k)|=\mathrm{ext}^1(\OO_S(m),\OO_X(-n)^{\oplus2})-4+N.
	\end{split}
\end{equation}
Applying the bifunctor $\mathbf{R}\Hom(-,-)$ to the triple \eqref{extension} and taking into account
the equalities \eqref{vanish ext0} and \eqref{vanish ext}, we obtain for $[E]\in\dim\YY_{k,m,n}$
a commutative diagram
\begin{equation}\label{diag 99}
	\begin{gathered}
		\xymatrix{
			\underset{{}}{\bk}\ar[r]^-{\cong}\ar@{>->}[d] &
			\underset{{}}{\Hom(\OO_S(m),\OO_S(m))}
			\ar@{>->}[d] & \\
			\underset{{}}{\bk^4}\ \ar@{>->}[r]\ar@{>>}[d] &
			\Ext^1(\OO_S(m),\OO_X(-n)^{\oplus2})\
			\ar@{>>}[r]\ar[d] &
			\underset{{}}{\Ext^1(E,\OO_X(-n)^{\oplus2})}\ar@{>->}[d]\\
			\Hom(\OO_X(-n)^{\oplus2},E)/\bk\ \ar@{>->}[r] &
			\Ext^1(\OO_S(m),E)\ \ar@{>>}[r]
			\ar@{>>}[d] & \Ext^1(E,E) \ar@{>>}[d] \\
			& \Ext^1(\OO_S(m),\OO_S(m))\ \ar[r]^-{\cong} &
			\Ext^1(E,\OO_S(m)).}
	\end{gathered}
\end{equation}
From this diagram and the equality $\mathrm{ext}^1(\OO_S(m),\OO_S(m))=N$ (see
\eqref{vanish ext0}) we get
\begin{equation}\label{ext1EE}
	\mathrm{ext}^1(E,E)=\mathrm{ext}^1(\OO_S(m),\OO_X(-n)^{\oplus2})-4+N,
\end{equation}
hence, due to \eqref{dim=dim}, we find $\dim\Theta(\YY_{k,m,n})=\mathrm{ext}^1(E,E)=
\dim_{[E]} M_X(v)$. Taking into account Theorem \ref{Theorem 4.9}, this means that $M_{k,m,n}$ is a
smooth open subset of an irreducible component $\overline{M}_{k,m,n}$ of the
moduli scheme $M_X(v)$. From here and from \eqref{Theta bijective}, according to
\cite[Ch. 2, \S4.4, Thm. 2.16]{Sh}, it follows that
\begin{equation}\label{Theta isom}
	\Theta:\ \YY_{k,m,n}\to M_{k,m,n}\ \text{is an isomorphism},
\end{equation}
that is, $M_{k,m,n}$ is a smooth rational variety, isomorphic in view of Theorem
\ref{Theorem 4.9} to a dense open subset in the smooth variety ${\GG}r(\AA
^{\vee},2)$. At the same time, it directly follows from the diagram \eqref{diag 83} that on
$M_{k,m,n}$ there is a universal family of sheaves. Formulas for dimensions
of varieties $M_{k,m,n}$ as functions of $k,m,n$ are obtained taking into account the Riemann--Roch Theorem
for $X_5$, formulas \eqref{zero cohom}, \eqref{tr X,S}, \eqref{dim=dim} and standard
resolutions for the sheaves $\OO_{X_2}$, $\OO_{X_4}$:
\begin{equation}\label{resolns}
	\begin{split}
		& 0\to\OO_{\P^4}(-2)\to\OO_{\P^4}\to\OO_{X_2}\to0,\\
		& 0\to\OO_{\P^5}(-4)\to\OO_{\P^5}(-2)^{\oplus2}\to\OO_{\P^5}\to\OO_{ X_4}\to0,\\
	\end{split}
\end{equation}
Moreover, in view of Theorem \ref{Theorem 4.9}, the reflexive sheaves in $M_{k,m,n}$ constitute
a dense open set
\begin{equation}\label{R k,m,n}
	R_{k,m,n}:=\Theta(\RR_{k,m,n})\cong\RR_{k,m,n}.
\end{equation}

Thus, the following theorem holds.

\begin{theorem}\label{Theorem 4.1}
	Let $X$ be one of the Fano varieties $X_1$, $X_2$, $X_4$, $X_5$. For an arbitrary
	natural number $k$ and integers $m$ and $n=\lceil\frac k2\rceil$ such that $m+n<0$
	let $M_X(v)$ be the Gieseker-Maruyama moduli scheme of semistable sheaves $E$ on $X$ with 
	Chern character
	$v=\mathrm{ch}(E)$, determined by the formulas \eqref{c vers ch} and
	\eqref{k even}-\eqref{k odd}. Then the following statements are true.\\
	(i) The family
	$$
	M_{k,m,n}=\{[E]\in M_X(v)\ |\ E\ \textit{is a stable extension \eqref{extension}, where}\
	S\in|\OO_X(k)|\}
	$$
is a smooth dense open subset of an irreducible rational component of
scheme $M_X(v)$, and there is an isomorphism \eqref{Theta isom}, in which $\YY_{k,m,n}$ is
the open subset of the grassmannization defined in \eqref{def Y} $\pi:\ {\GG}r(\AA
^{\vee},2)\to\P^N$, where $\AA$ is the locally free sheaf on $\P^N=|\OO_X(k)|$,
given by the first formula \eqref{sheaf A}; in this case $M_{k,m,n}$ is a fine
moduli space, and reflexive sheaves constitute a dense open
subset $R_{k,m,n}$ defined in \eqref{R k,m,n}.\\
(ii) The dimension of the variety $M_{k,m,n}$ is given by the formulas:\\
(ii.1) $\dim M_{k,m,n}=2\binom{k-m-n+3}{3}-2\binom{-m-n+3}{3}+\binom{k +3}{k}-5$
for $X=X_1$,\\
(ii.2) $\dim M_{k,m,n}=2\binom{k-m-n+4}{4}-2\binom{k-m-n+2}{4}-2\binom{- m-n+4}{4}+
2\binom{-m-n+2}{4}+\binom{k+4}{4}-\binom{k+2}{4}-5$ for $X=X_2$,\\
(ii.3) $\dim M_{k,m,n}=2\binom{k-m-n+5}{5}-4\binom{k-m-n+3}{5}+2\binom{k-m -n+1}{5}-
2\binom{-m-n+5}{5}+4\binom{-m-n+3}{5}-2\binom{-m-n+1}{5}+\binom{k +5}{5}-2\binom{k+3
}{5}+\binom{k+1}{5}-5$ for $X=X_4$,\\
(ii.4) $\dim M_{k,m,n}=\frac53(k-m-n)^3+5(k-m-n)^2+\frac{16}3
(k-m-n)-\frac53(-m-n)^3-5(-m-n)^2-\frac{16}3(-m-n)
+\frac56k^3+\frac52k^2+\frac{8}3k-4$ for $X=X_5$.\\
(iii) The dimension of the variety $M_{k,m,n}$ coincides with the virtual dimension of the scheme
$M_X(v)$, that is, $\rmext^2(E,E)=0$ for $[E]\in M_{k,m,n}$, if and only if
$k<i$ and $m>k-n-i$, where $i$ is the index of the Fano variety $X$.
\end{theorem}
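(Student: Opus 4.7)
My plan is to assemble parts (i) and (ii) directly from the constructions developed above, then handle (iii) with a separate $\Ext^2$ computation. For (i), I would first note that by Theorem \ref{Theorem 4.9}, $\YY_{k,m,n}$ and $\RR_{k,m,n}$ are dense open subsets of the smooth rational grassmannization ${\GG}r(\AA^{\vee},2)$. Bijectivity of the modular morphism $\Theta$ in \eqref{def Theta} is a direct consequence of the equality $\hom(E,\OO_S(m))=1$ in \eqref{vanish ext0}, which forces the subsheaf $\OO_X(-n)^{\oplus2}\hookrightarrow E$ to be unique, so $[\xi]$ can be reconstructed from $[E]$. Smoothness follows from the identity $\rmext^1(E,E)=\dim\YY_{k,m,n}$, extracted by chasing the four rows and columns of the diagram \eqref{diag 99} together with \eqref{dim=dim}; this shows $\dim_{[E]}M_X(v)=\dim\YY_{k,m,n}$, so $M_{k,m,n}$ sits smoothly inside a unique irreducible component of $M_X(v)$. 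A bijective morphism between smooth irreducible varieties of matching dimensions is an isomorphism by \cite[Ch.~2, \S4.4, Thm.~2.16]{Sh}, yielding \eqref{Theta isom} and rationality. Finally, the universal extension \eqref{univ ext} together with the pushout \eqref{diag 83} descends to a universal family on $M_{k,m,n}$, establishing fineness, while density of the reflexive locus $R_{k,m,n}$ is part of Theorem \ref{Theorem 4.9}.

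For (ii), the strategy is to evaluate
\[
\dim M_{k,m,n}=2\,\rmext^1(\OO_S(m),\OO_X(-n))+N-4
\]
case by case. Applying $\Hom(-,\OO_X(-n))$ to the defining triple \eqref{tr X,S} and using the vanishings \eqref{zero cohom}, one obtains
\[
\rmext^1(\OO_S(m),\OO_X(-n))=h^0(\OO_X(k-n-m))-h^0(\OO_X(-n-m)),
\]
the second term vanishing throughout the range $m<-n$. Substituting $h^0(\OO_{\P^3}(\ell))=\binom{\ell+3}{3}$, the Koszul resolutions of $\OO_{X_2}$ and $\OO_{X_4}$ from \eqref{resolns}, and Riemann--Roch on $X_5$ for the remaining case, and adding $N=h^0(\OO_X(k))-1$, directly yields the four closed formulas (ii.1)--(ii.4).

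For (iii), I would compute $\rmext^2(E,E)$ by applying $\RHom(-,E)$ and $\RHom(E,-)$ to \eqref{extension} and linking the resulting long exact sequences in an analogue of \eqref{diag 99}. Using Serre duality $\Ext^i(F,G)\cong\Ext^{3-i}(G,F\otimes\omega_X)^{\vee}$ with $\omega_X\cong\OO_X(-i_X)$, where $i_X$ denotes what the theorem calls the index (namely $4,3,2,1$ for $X=X_1,X_2,X_4,X_5$ respectively), the residual $\Ext^2$ contributions reduce to cohomology groups of the form $H^j(\OO_X(k-2n-i_X))$, $H^j(\OO_X(-m-n-i_X))$ and $H^j(\OO_S(2m+i_X-k))$. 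Standard vanishing arguments via the resolutions \eqref{resolns} and \eqref{tr X,S} collapse the simultaneous vanishing of all such terms to exactly $k<i_X$ and $m>k-n-i_X$. The hard part here, I expect, is purely organizational: one must simultaneously track several $\Ext^2$ groups appearing in intertwined long exact sequences, and verify that the induced boundary maps are well-behaved enough that the combined vanishing really distils to the stated pair of inequalities rather than a strictly stronger or different conjunction.
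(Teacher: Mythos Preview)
Your plan for (i) and (ii) is exactly the paper's: it assembles (i) from Theorem \ref{Theorem 4.9}, the uniqueness argument giving \eqref{Theta bijective}, diagram \eqref{diag 99} giving \eqref{ext1EE}, and the reference \cite[Ch.~2, \S4.4, Thm.~2.16]{Sh} for \eqref{Theta isom}; and it derives (ii) from \eqref{dim=dim} via \eqref{tr X,S}, \eqref{resolns}, and Riemann--Roch on $X_5$. Your outline for (iii) is likewise what the paper does (``direct calculation by analogy with \eqref{vanish ext}'').

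Two slips to fix. In (ii) you assert that $h^0(\OO_X(-n-m))$ vanishes for $m<-n$, but $-n-m>0$ in this range, so the term is nonzero and indeed appears in each formula (e.g.\ the $-2\binom{-m-n+3}{3}$ in (ii.1)); your displayed identity for $\rmext^1(\OO_S(m),\OO_X(-n))$ is correct, just retain both summands. In (iii), the index of $X_5$ is $2$, not $1$: both $X_4$ and $X_5$ are del Pezzo threefolds with $\omega_X\cong\OO_X(-2)$, so the list of indices is $4,3,2,2$.
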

\begin{proof}
Statements (i) and (ii) of the theorem were proven above. Statement (iii)
is proven by a direct calculation by analogy with the formulas
\eqref{vanish ext}.
\end{proof}
\begin{remark}\label{Remark 4.A}
Direct checking using formulas \eqref{k even} and \eqref{k odd}
for the Chern classes of the sheaf $[E]\in M_{k,m,n}$ shows that
$\frac{c_3(E)}{H^3}< \frac 4k(\frac{c_2(E)}{H^2})^2$ in case of
even $k$, and, respectively, $\frac{c_3(E)}{H^3}<\frac{4k}{
	(k-1)^2}(\frac{c_2(E)}{H^2})^2$ in case of odd $k$.
\end{remark}

\begin{remark}\label{Remark 4.B}
	On the quadric $X=X_2$ we consider a nontrivial extension of the form
	\begin{equation}\label{extension Spin}
		\begin{split}
			& 0\to\SS(-n)\to E\to\OO_S(m)\to 0,\ \ \ \ \ \ \ \ \ S\in|\OO_X(k)|,\\
			& k\ge1,\ \ \ \ \ n=\lfloor\frac k2\rfloor+1, \ \ \ \ \ m\le-n,
		\end{split}
	\end{equation}
	where $\SS$ is the spinor bundle on $X$. Let $M_X(v)$ be the Gieseker-Maruyama moduli scheme
    of semistable sheaves $E$ with Chern character $v=\mathrm{ch}(E)$,
	determined by the formulas \eqref{c vers ch}, \eqref{v(S)} and \eqref{extension Spin}.
	From \eqref{extension Spin} it follows that $E$ satisfies the formulas \eqref{vanish ext0},
	\eqref{vanish ext}, \eqref{ext1EE} and the diagram \eqref{diag 99}, in which
	$\OO_X(-n)^{\oplus2}$ is everywhere replaced by $\SS(-n)$, and the left vertical triple in
	\eqref{diag 99} is replaced by the triple $\bk\rightarrowtail\bk\twoheadrightarrow0$.
	Respectively, in the notation of the first formula \eqref{sheaf A}, on $\P^N$ the following
	locally free sheaf is defined:
	\begin{equation}\label{sheaf AS}
		\AA_{\SS}=\EE xt_p^1(\OO_{\P^N}\boxtimes\OO_X(m)|_{\mathbb{S}},\OO_{\P^N}\boxtimes
		\SS(-n)).\\
	\end{equation}
In addition, replacing the sheaf $\OO_X(-n)^{\oplus2}$ in the formulas \eqref{grasmn}-\eqref{def RR}
by $\SS(-n)$, we obtain the projectivization $\pi:\ \P(\AA_{\SS}^{\vee})\to\P^N$ and its dense
open subsets $\YY_{\SS,k,m,n}$ and $\RR_{\SS,k,m,n}$ of stable and reflexive
sheaves respectively. For them, by analogy with \eqref{open subsets}, we have embeddings of dense
open sets
\begin{equation}\label{subsets S}
	\RR_{\SS,k,m,n}\subset\YY_{\SS,k,m,n}\subset{\GG}r(\AA_{\SS}^{\vee},2)
\end{equation}
and, as in \eqref{Theta isom}, an isomorphism
\begin{equation}\label{Theta isom S}
	\Theta:\ \YY_{\SS,k,m,n}\xrightarrow{\cong} M_{k,m,n}:=\Theta(\YY_{\SS,k,m,n}),\ \ \
	[\xi]\mapsto[E_{\xi}].
\end{equation}
Repeating for the indicated varieties the corresponding stages of the proof of Theorem
\ref{Theorem 4.1}, we obtain the following theorem.
\end{remark}

\begin{theorem}\label{Theorem 4.1S}
	Let $X=X_2$ be the three-dimensional quadric. Then, in the notation of Remark \ref{Remark 4.B},
	the following statements are true.\\
	(i) The family
	$$
	M_{k,m,n}=\{[E]\in M_X(v)\ |\ E\ \textit{is a stable extension \eqref{extension Spin},
		where}\S\in|\OO_X(k)|\}
	$$
	is a smooth dense open subset of an irreducible rational component of
	scheme $M_X(v)$, and there is an isomorphism $\Theta$ in \eqref{Theta isom S}, in which
	$\YY_{\SS,k,m,n}$ is an open subset (defined in Remark \ref{Remark 4.B}) of the
	projectivization $\pi:\ \P(\AA_{\SS}^{\vee})\to\P^N$, where $\AA_{\SS}$ is the locally
	free sheaf on $\P^N=|\OO_X(k)|$, given by the formula \eqref{sheaf AS}; wherein
	$M_{k,m,n}$ is a fine moduli space, and reflexive sheaves constitute in
	$M_{k,m,n}$ a dense open set $R_{\SS,k,m,n}=\Theta(\RR_{\SS,k,m,n})$, where
	$\RR_{\SS,k,m,n}$ is defined in Remark \ref{Remark 4.B}.\\
	(ii) The dimension of the variety $M_{k,m,n}$ is given by the formula $\dim M_{k,m,n}=\\
	4\binom{k-m-n+3}{3}-4\binom{-m-n+3}{3}+\binom{k+4}{4}-\binom{k+2}{4}-2$.
\end{theorem}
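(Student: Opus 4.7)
The plan is to carry out the argument of Theorem \ref{Theorem 4.1} \emph{mutatis mutandis}, replacing the split bundle $\OO_X(-n)^{\oplus 2}$ by the spinor bundle $\SS(-n)$ throughout. The only structural difference is that $\SS$ is exceptional in $D^b(X_2)$, so $\hom(\SS(-n),\SS(-n))=1$ and $\rmext^i(\SS(-n),\SS(-n))=0$ for $i\ge 1$. Consequently, a nontrivial extension class $\xi\in\Ext^1(\OO_S(m),\SS(-n))$ is determined up to a scalar rather than up to a $GL_2$-action, which is why the relevant parameter space is the projectivization $\P(\AA_{\SS}^{\vee})\to\P^N$ rather than a grassmannization. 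I would first check, using the defining triple \eqref{spin tr} together with \eqref{tr X,S}, the vanishings $\rmext^i(\OO_S(m),\SS(-n))=0$ for $i=0,2,3$ in the range $k\ge 1$, $n=\lfloor k/2\rfloor+1$, $m\le -n$; these ensure that $\AA_{\SS}$ in \eqref{sheaf AS} is locally free and that its formation commutes with base change on $\P^N=|\OO_X(k)|$. The universal extension on $\P(\AA_\SS^\vee)\times X_2$ is then obtained by pushforward exactly as in the lower row of \eqref{univ ext}, giving fineness of $M_{k,m,n}$ for free.

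The next step is reflexivity of the generic extension $E$. Applying $\mathbf{R}\HH om(-,\OO_X)$ to \eqref{extension Spin} and using the local duality $\EE xt^1(\OO_S(m),\OO_X)\cong\OO_S(k-m)$ together with the self-duality $\SS^{\vee}\cong\SS(-1)$ of the spinor bundle, one obtains
\begin{equation*}
    0\to E^{\vee}\to\SS(n-1)\to\OO_S(k-m)\to\tau\to 0,\qquad \tau=\EE xt^1(E,\OO_X),
\end{equation*}
and a local calculation identifies $\Sing E$ with the zero-scheme of the induced section of $\SS(n-1)|_S\otimes\OO_S(k-m)$. For $\xi$ in a dense open subset of $\Ext^1(\OO_S(m),\SS(-n))$ this scheme is zero-dimensional by ampleness of the relevant sheaf on $S$, so $\tau$ is artinian and a second application of $\mathbf{R}\HH om(-,\OO_X)$ gives $E\cong E^{\vee\vee}$; hence $\RR_{\SS,k,m,n}$ is dense in $\YY_{\SS,k,m,n}$. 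Gieseker-stability of a reflexive $E\in\RR_{\SS,k,m,n}$ then follows exactly as in the proof of Theorem \ref{Theorem 4.9}: the hypotheses force $-1\le c_1(E)\le 0$ and, from the cohomology of $\SS(-n)$ and $\OO_S(m)$ via \eqref{spin tr} and \eqref{tr X,S}, also $h^0(E)=0$; a putative rank-one destabilizing subsheaf would saturate, by reflexivity, to $\OO_X(q)\hookrightarrow E$ with $q\ge 0$, contradicting $h^0(E(-q))=0$. Injectivity of the modular map $\Theta$ in \eqref{Theta isom S} is then immediate from $\hom(\SS(-n),\SS(-n))=1$ and $\hom(\OO_S(m),\SS(-n))=0$, which force the subsheaf $\SS(-n)\hookrightarrow E$, and hence the pair $(S,[\xi])$, to be uniquely determined by $[E]$.

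For smoothness of $M_{k,m,n}$ in $M_X(v)$, apply $\mathbf{R}\Hom(-,-)$ to \eqref{extension Spin} and assemble the analog of the diagram \eqref{diag 99}, in which the upper-left column $\bk\hookrightarrow\bk^4$ is replaced by the identity $\bk\xrightarrow{=}\bk$ (reflecting $\hom(\SS(-n),\SS(-n))=1$ in place of $4$). This yields
\begin{equation*}
    \rmext^1(E,E)=\rmext^1(\OO_S(m),\SS(-n))-1+N=\dim\YY_{\SS,k,m,n},
\end{equation*}
so $\dim_{[E]}M_X(v)=\dim M_{k,m,n}$, $M_X(v)$ is smooth along $M_{k,m,n}$, and the bijection $\Theta$ is therefore an isomorphism by \cite[Ch. 2, \S4.4, Thm. 2.16]{Sh}. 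Rationality is automatic, since $\P(\AA_\SS^\vee)$ is a projective bundle over the rational base $\P^N$. Finally, the explicit formula (ii) follows by evaluating $\rmext^1(\OO_S(m),\SS(-n))$: combining \eqref{spin tr} with the Serre dual form $\Ext^i(\OO_S(m),\SS(-n))\cong H^{i-1}(S,\SS(k-m-n)|_S)$ reduces the computation to differences $h^0(\SS(k-m-n))-h^0(\SS(-m-n))$ on $X_2$, and the resolution \eqref{resolns} for $\OO_{X_2}$ together with the recursion $h^0(\SS(j))=4h^0(\OO_{X_2}(j))-h^0(\SS(j-1))$ gives $h^0(\SS(j))=4\binom{j+3}{3}$ for $j\ge 0$, producing the stated expression after adding $\dim\P^N$ and subtracting $1$ for the projectivization. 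The main expected obstacle is the bookkeeping for twists of the spinor bundle restricted to divisors $S\in|\OO_X(k)|$, in particular establishing the vanishings $\rmext^{0,2,3}(\OO_S(m),\SS(-n))=0$ in the full stated range of $(k,m,n)$; these are controlled by \eqref{spin tr}, but are less transparent than the pure line-bundle computations used in the proof of Theorem \ref{Theorem 4.1}.
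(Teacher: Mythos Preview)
Your proposal is correct and follows essentially the same approach as the paper: the paper's ``proof'' is contained in Remark \ref{Remark 4.B}, which simply instructs one to repeat the argument of Theorem \ref{Theorem 4.1} with $\OO_X(-n)^{\oplus2}$ replaced by $\SS(-n)$, noting that the left vertical triple in diagram \eqref{diag 99} becomes $\bk\rightarrowtail\bk\twoheadrightarrow0$---precisely the modification you identify as arising from the exceptionality of $\SS$. Your treatment is in fact more explicit than the paper's, particularly in the reflexivity step (where you correctly use $\SS^\vee\cong\SS(-1)$) and in the dimension computation (your formula $h^0(\SS(j))=4\binom{j+3}{3}$ for $j\ge0$ and the reduction $\rmext^1(\OO_S(m),\SS(-n))=h^0(\SS(k-m-n))-h^0(\SS(-m-n))$ via the ACM property of $\SS$ give exactly the stated expression).
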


\subsection{Second series.}\label{subsec 4.2}
In this subsection we treat in more detail the extensions \eqref{extension I} on the quadric
$X=X_2$.

Let us first find the dimension of the space $\Ext^1(\II(m),\II(m))$, which we will need
below. To do this, apply the functor $\mathbf{R}\Hom(-,\II(1))$ to the triple \eqref{spin tr on X}:
\begin{equation*}
	\begin{split}
		& 0\to\Hom(\II(1),\II(1))\to\Hom(\OO_X^{\oplus2},\II(1))\to\Hom(\SS(-1), \II(1))\to \\
		& \to\Ext^1(\II(1),\II(1))\to\Ext^1(\OO_X^{\oplus2},\II(1)).
	\end{split}
\end{equation*}
From here and from \eqref{vanish ext I} and \eqref{vanish ext I2} we find
\begin{equation}\label{ext(I,I)}
	\rmext^1(\II(m),\II(m))=\rmext^1(\II(1),\II(1))=4.
\end{equation}

Let us now consider the lower triple \eqref{univ ext}. By universality, this triple
restricted to $\{x\}\times X$, $x\in\mathbb{G}$, in view of the base change \eqref{bch}
can be included with the extension \eqref{extension I}, where $\widetilde{\xi}:\
\Ext^1(\II(m),\OO_X(-1))^{\vee} \to \bk^2$ is the second homomorphism in \eqref{hom xi} and
$E=E_{\widetilde{\xi}}$, into a commutative diagram similar to \eqref{diag 83}:
\begin{equation}\label{diag 83 I}
	\begin{gathered}
		\xymatrix{
			& \Ext^1(\II(m),\OO_X(-1))^{\vee}\otimes\OO_X(-n)\ \ar@{>->}[r]\ar[d]_ -{\widetilde{
					\xi}} & \widetilde{\WW}|_{\{x\}\times X}\ar@{>>}[r]\ar[d] & \II(m) \ar@{=} [d]\\
			& \OO_X(-1)^{\oplus 2}\ \ar@{>->}[r] & E_{\widetilde{\xi}}\ar@{>>}[r] & \II(m ).}
	\end{gathered}
\end{equation}
Moreover, as in \eqref{diag 83}, if $E_{\widetilde{\xi}}$ is stable, then
$\widetilde{\xi}$ is an epimorphism. Therefore, by analogy with
\eqref{def Y}-\eqref{def RR}, consider the grassmannization
$$
\widetilde{\pi}:\ {\GG}r(\widetilde{\AA}^{\vee},2)\to\mathbb{G}
$$
of two-dimensional quotient spaces of the bundle $\widetilde{\AA}^{\vee}$ on $\mathbb{G}$,
defined in \eqref{sheaf A}, and its open subsets
\begin{equation}\label{def tilde Y}
\widetilde{\YY}_m:=\{[\widetilde{\xi}]\in{\GG}r(\widetilde{\AA}^{\vee},2)\ |\
E_{\widetilde{\xi} }\ \textit{is stable}\},
\end{equation}
\begin{equation}\label{def tilde RR}
\widetilde{\RR}_m:=\{[\widetilde{\xi}]\in{\GG}r(\widetilde{\AA}^{\vee},2)\ |\
E_{\widetilde{\xi}}\ \textit{is reflexive}\}.
\end{equation}
Repeating the proof of Theorem 4.9 for $E$ from the triple \eqref{extension I},
we obtain embeddings of dense open subsets
\begin{equation}\label{tilde subsets}
	\widetilde{\RR}_m\subset\widetilde{\YY}_m\subset{\GG}r(\widetilde{\AA}^{\vee},2).
\end{equation}
As in \eqref{def Theta} we have the modular morphism $\Theta:\ \widetilde{\YY}_m\to M_X(v),
\ [\xi]\mapsto[E_{\widetilde{\xi}}]$, where $[E_{\widetilde{\xi}}]$ is the isomorphism class
of the bundle $E_{\widetilde{\xi}}$ obtained from the point $\widetilde{\xi}$ as the lower
extension in the diagram \eqref{diag 83 I}. Assuming $\widetilde{M}_m=\Theta(
\widetilde{\YY}_m)$, as in \eqref{Theta bijective} and \eqref{dim=dim}, we have a bijection
\begin{equation}\label{Theta bijective 2}
	\Theta:\ \widetilde{\YY}_m\xrightarrow{\simeq}\widetilde{M}_m
\end{equation}
and an equality
\begin{equation}\label{dim=dim2}
	\begin{split}
		& \dim\widetilde{M}_m=\dim\widetilde{\YY}_m=\dim\mathrm{Gr}(\Ext^1(\II(m),
		\OO_X(-1))^{\vee},2)+\dim\mathbb{G}=\\
		& 2\mathrm{ext}^1(\II(m),\OO_X(-1))-4+4=\mathrm{ext}^1(\II(m),\OO_X(-1)^{ \oplus2}).
	\end{split}
\end{equation}
Applying the bifunctor $\mathbf{R}\Hom(-,-)$ to the triple \eqref{extension I} and taking into account
the equalities \eqref{vanish ext0}-\eqref{vanish ext I}, we obtain a commutative diagram
\begin{equation}\label{diag 113}
	\begin{gathered}
		\xymatrix{
			\underset{{}}{\bk}\ar[r]^-{\cong}\ar@{>->}[d] & \underset{{}}{\Hom(\II(m),\ II(m))}
			\ar@{>->}[d] & \\
			\underset{{}}{\bk^4}\ \ar@{>->}[r]\ar@{>>}[d] & \Ext^1(\II(m),\OO_X(- 1)^{\oplus2})\
			\ar@{>>}[r]\ar[d] & \underset{{}}{\Ext^1(E,\OO_X(-1)^{\oplus2})}\ar@{>->}[d]\\
			\Hom(\OO_X(-1)^{\oplus2},E)/\bk\ \ar@{>->}[r] &
			\Ext^1(\II(m),E)\ \ar@{>>}[r]\ar@{>>}[d] & \Ext^1(E,E) \ar@{>>}[d]\\
			& \Ext^1(\II(m),\II(m))\ \ar[r]^-{\cong} & \Ext^1(E,\II(m)).}
	\end{gathered}
\end{equation}
From this diagram and the equalities \eqref{ext(I,I)} and \eqref{dim=dim2} we obtain
\begin{equation}\label{ext1EE I}
	\rmext^1(E,E)=\rmext^1(\II(m),\OO_X(-n)^{\oplus2})=\dim\widetilde{M}_m.
\end{equation}
From here, according to the deformation theory, in view of the stability of $E$, it follows that $\widetilde{M}_{k,m,n}$
is a smooth open subset of an irreducible component of the module scheme $M_X(v)$. Therefore
from \eqref{Theta bijective 2}, as in \eqref{Theta isom}, it follows that
\begin{equation}\label{Theta isom I}
	\Theta:\ \widetilde{\YY}_m\to\widetilde{M}_m \text{is an isomorphism},
\end{equation}
that is, $\widetilde{M}_m$ is a smooth rational variety isomorphic,
due to \eqref{tilde subsets}, to a dense open subset of a smooth variety
${\GG}r(\widetilde{\AA}^{\vee},2)$. Moreover, $\widetilde{M}_m$ is also
a fine moduli space. Formulas for the dimensions of the varieties $\widetilde{M}_m$ as
functions of $m$ are obtained taking into account \eqref{zero cohom}-\eqref{tr I,S} and
\eqref{dim=dim2} by analogy with formulas for dimensions of $M_m$.\\
Moreover, due to \eqref{tilde subsets}, reflexive sheaves in $\widetilde{M}_m$
constitute a dense open subset
\begin{equation}\label{tilde R k,m,n}
	\widetilde{R}_m:=\Theta(\widetilde{\RR}_m)\cong\widetilde{\RR}_m.
\end{equation}

So, the following theorem holds.
\begin{theorem}\label{Theorem 4.2}
	Let $X=X_2$ be the quadric, and let $M_X(v)$ be the Gieseker-Maruyama moduli scheme
	of semistable sheaves $E$ on $X$ with Chern character $v=\mathrm{ch}(E)$ defined by
	the formulas \eqref{c vers ch 3} and \eqref{extension I}. Then the following statements are true.\\
	(i) The family
	\begin{equation}
		\begin{split}
			& \widetilde{M}_m=\{[E]\in M_X(v)\ |\ E\ \textit{is a stable extension
				\eqref{extension I}}\},
		\end{split}
	\end{equation}
is a smooth dense open subset of an irreducible rational component of the
scheme $M_X(v)$, and there is an isomorphism \eqref{Theta isom I}, in which $\widetilde{\YY
}_m$ is the defined in \eqref{def tilde Y} open subset of the grassmannization 
$\widetilde{\pi}:\ {\GG}r(\widetilde{\AA}^{\vee},2)\to\mathbb{G}$, where $\widetilde{\AA}$
is the locally free sheaf on $\mathbb{G}$, given by the second formula \eqref{sheaf A};
moreover, $\widetilde{M}_m$ is a fine moduli space, and reflexive sheaves
constitute in $\widetilde{M}_m$ the dense open set $\widetilde{R}_m$,
defined in \eqref{tilde R k,m,n}.\\
(ii) The dimension of the variety $\widetilde{M}_m$ is given by the formula\\
$\dim\widetilde{M}_m=2\binom{-m+4}{4}-2\binom{-m+2}{4}-2\binom{-m+3}{4}+2 \binom{-m+1}{4}
-2m+2=2m^2-6m+4$.\\
\end{theorem}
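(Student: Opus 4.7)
My plan is to follow verbatim the scheme of proof of Theorem \ref{Theorem 4.1}, replacing the pair $(\OO_S(m), \OO_X(-n)^{\oplus 2})$ with $(\II(m), \OO_X(-1)^{\oplus 2})$ and the parameter space $\P^N$ with the grassmannian $\mathbb{G}$ of the second line of \eqref{sheaf A}. Essentially all of the machinery is already in place in the text preceding the statement: the universal extension $\widetilde{\WW}$ from \eqref{univ ext}, the pushout diagram \eqref{diag 83 I}, the bijection \eqref{Theta bijective 2}, the dimension equality \eqref{dim=dim2}, the diagram \eqref{diag 113} isolating $\Ext^1(E,E)$, and the computation \eqref{ext(I,I)}. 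What remains is to justify the two open-subset inclusions \eqref{tilde subsets}, to close the $\Ext^1$ calculation and upgrade $\Theta$ to an isomorphism, and to carry out the final numerical evaluation.

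First I would check that a point $[\widetilde{\xi}]$ for which $E_{\widetilde{\xi}}$ is Gieseker-stable automatically lies in the grassmannization: if $\widetilde{\xi}$ had rank one, diagram \eqref{diag 83 I} would factor through a subextension by a single copy of $\OO_X(-1)$ and produce a destabilizing surjection $E_{\widetilde{\xi}} \twoheadrightarrow \OO_X(-1)$. Together with the uniqueness of the subsheaf $\OO_X(-1)^{\oplus 2} \hookrightarrow E_{\widetilde{\xi}}$, forced by $\Hom(\OO_X(-1)^{\oplus 2}, \II(m)) = 0$ in \eqref{vanish ext I}, this gives the bijection \eqref{Theta bijective 2}. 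I would then prove the inclusions \eqref{tilde subsets} by the argument of Theorem \ref{Theorem 4.9}: applying $\mathbf{R}\HH om(-, \OO_X)$ to \eqref{extension I} identifies the singular locus of a general $E_{\widetilde{\xi}}$ with the zero locus of a section of a suitably positive sheaf on $S$, producing reflexivity generically; and a reflexive $E_{\widetilde{\xi}}$ with a destabilizing rank-one subsheaf must admit an injection $\OO_X(q) \hookrightarrow E_{\widetilde{\xi}}$ with $q \ge 0$, which is forbidden by $h^0(E_{\widetilde{\xi}}) = 0$, itself a consequence of \eqref{extension I} and of $H^0(\II(m)) = 0$ for $m<0$.

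With the geometry of $\widetilde{\YY}_m$ settled, diagram \eqref{diag 113} and \eqref{ext(I,I)} imply $\rmext^1(E,E) = \dim \widetilde{M}_m$ for every $[E] \in \widetilde{M}_m$, so $\widetilde{M}_m$ is a smooth open subset of a uniquely determined irreducible component of $M_X(v)$ of the expected dimension. Bijectivity of $\Theta$ from the smooth $\widetilde{\YY}_m$ onto $\widetilde{M}_m$, together with \cite[Ch.~2, \S4.4, Thm.~2.16]{Sh}, then upgrades $\Theta$ to an isomorphism; rationality follows since $\widetilde{\YY}_m$ is open in the grassmannian bundle ${\GG}r(\widetilde{\AA}^{\vee}, 2)$ over $\mathbb{G}$, and fineness follows from $\widetilde{\WW}$ in \eqref{univ ext} combined with the unique-subsheaf property. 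For (ii), computing $\rmext^1(\II(m), \OO_X(-1))$ by applying $\Hom(-, \OO_X(-1))$ to the spinor resolution \eqref{spin tr on X} of $\II(1)$, and evaluating the resulting cohomology on $X_2$ via the quadric resolution \eqref{resolns}, yields the polynomial $2m^2 - 6m + 4$. The main obstacle I expect is the reflexivity step: since the quotient $\II(m)$ is a proper ideal sheaf on the surface $S$ rather than a line bundle as in subsection \ref{subsec 4.1}, one must first resolve $\II$ via \eqref{tr I,S} and track the extra local $\EE xt$ contribution coming from $\OO_{\P^1}$ before the generic-zero-locus argument can be applied in the form used to establish \eqref{isom Ext}.
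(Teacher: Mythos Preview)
Your proposal is correct and follows essentially the same approach as the paper: the argument in subsection~\ref{subsec 4.2} is exactly the transcription of the proof of Theorem~\ref{Theorem 4.1} with $(\OO_S(m),\P^N)$ replaced by $(\II(m),\mathbb{G})$, invoking the analogue of Theorem~\ref{Theorem 4.9} for the inclusions \eqref{tilde subsets}, the diagram \eqref{diag 113} together with \eqref{ext(I,I)} for $\rmext^1(E,E)=\dim\widetilde{M}_m$, and \cite[Ch.~2, \S4.4, Thm.~2.16]{Sh} to upgrade the bijection $\Theta$ to an isomorphism. Your anticipation of the extra care needed in the reflexivity step---tracking the $\lExt$ contribution from $\OO_{\P^1}$ via \eqref{tr I,S}---is apt, and is precisely the point the paper subsumes under ``Repeating the proof of Theorem~4.9.''
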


\begin{remark}\label{Remark 4.B I}
	On the quadric $X=X_2$ we consider a nontrivial extension of the form
	\begin{equation}\label{extension Spin I}
		\begin{split}
			& 0\to\SS(-1)\to E\to\II(m)\to 0,\ \ \ \ \ \ \ \ \ \ m<-1,
		\end{split}
	\end{equation}
	where $\SS$ is the spinor bundle on $X$, and the sheaf $\II$ is the same as in
	\eqref{extension I}. As in the case of sheaves $E$ from \eqref{extension I}, we assume that
	the sheaf $E$ is stable. Let $M_X(v)$ be the Gieseker-Maruyama moduli scheme of semistable
	sheaves $E$ with Chern character $v=\mathrm{ch}(E)$ defined by the formulas
	\eqref{c vers ch}, \eqref{v(S)} and \eqref{extension Spin I}. From \eqref{extension Spin I}
	it follows that $E$ satisfies the formulas \eqref{vanish ext0},
	\eqref{vanish ext I}, \eqref{vanish ext3}, \eqref{ext1EE I} and the diagram
	\eqref{diag 113}, in which the sheaf $\OO_X(-n)^{\oplus2}$ is replaced everywhere
	by $\SS(-1)$, and the left vertical triple in \eqref{diag 113} is replaced by a triple
	$\bk\rightarrowtail\bk\twoheadrightarrow0$. Respectively, in the notation of the second
	formula \eqref{sheaf A} on $\mathbb{G}$ the following locally free sheaf is defined:
\begin{equation}\label{sheaf AS I}
\widetilde{\AA}_{\SS}=\EE xt_{\widetilde{p}}^1(\mathbb{I}\otimes\OO_{\mathbb{G}}\boxtimes
\OO_X(m),\OO_{\mathbb{G}}\boxtimes\SS(-1)).
\end{equation}	
Next, as in Remark \ref{Remark 4.B}, we consider the projectivization $\widetilde{\pi}:\
\P(\widetilde{ \AA}_{\SS}^{\vee})\to\mathbb{G}$ and its open subsets $\widetilde{
	\YY}_{\SS,m}$ and $\widetilde{\RR}_{\SS,m}$ of stable and reflective sheaves
respectively. For them, by analogy with \eqref{subsets S} and \eqref{Theta isom S} we have
embeddings of dense open sets
\begin{equation}\label{subsets SI}
	\widetilde{\RR}_{\SS,m}\subset\widetilde{\YY}_{\SS,m}\subset{\GG}r(\widetilde{\AA}_{\SS}
	^{\vee},2)
\end{equation}
and an isomorphism
\begin{equation}\label{Theta isom SI}
	\Theta:\ \widetilde{\YY}_{\SS,m}\xrightarrow{\cong} M_m:=\Theta(\widetilde{\YY}_{\SS,m}),
	\ \ \ [\widetilde{\xi}]\mapsto[E_{\widetilde{\xi}}].
\end{equation}
Repeating the proof of Theorem \ref{Theorem 4.2} for the indicated varieties, we obtain
the following theorem.
\end{remark}

\begin{theorem}\label{Theorem 4.2S}
Let $X=X_2$ be the quadric. Then, in the notation of Remark \ref{Remark 4.B I}, the following statements are true.\\
(i) The family
$$
M_m=\{[E]\in M_X(v)\ |\ E\ \textit{is a stable extension \eqref{extension Spin I}} \}
$$
is a smooth dense open subset of an irreducible rational component of the
scheme $M_X(v)$, and there is an isomorphism $\Theta$ in \eqref{Theta isom SI}, in which
$\widetilde{\YY}_{\SS,m}$ is an open
subset of the projectivization $\widetilde{\pi}:\ \P(\widetilde{\AA}_{\SS}^{\vee})\to
\mathbb{G}$, defined in Remark \ref{Remark 4.B I}, where $\widetilde{\AA}_{\SS}$ is the locally free sheaf of rank
$\rk\widetilde{\AA}_{\SS}=2m^2-8m+7$
on $\mathbb{G}$ defined by the formula \eqref{sheaf AS I}; in this case $M_m$ is
a fine moduli space, and reflexive sheaves constitute a dense open
set $\widetilde{R}_{\SS,m}=\Theta(\widetilde{\RR}_{\SS,m})$ in $M_m$, where $\widetilde{\RR}
_{\SS,m}$ is defined in Remark \ref{Remark 4.B I}.\\
(ii) The dimension of the variety $M_m$ is given by the formula
$\dim M_m=2m^2-8m+10$.
\end{theorem}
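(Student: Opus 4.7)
The plan is to mirror the proof of Theorem \ref{Theorem 4.2}, with two structural differences. First, the subsheaf of the extension is the simple bundle $\SS(-1)$ in place of $\OO_X(-1)^{\oplus 2}$; since $\hom(\SS(-1),\SS(-1))=1$, extensions \eqref{extension Spin I} are classified up to isomorphism by $\P(\Ext^1(\II(m),\SS(-1)))$ rather than by a grassmannian, which accounts for the replacement of a Grassmann bundle by the projective bundle $\widetilde{P}=\P(\widetilde{\AA}_{\SS}^{\vee})$ over $\mathbb{G}$. Second, the resulting extensions have $c_1(E_{\widetilde{\xi}})=0$ rather than $-H$, so the stability argument is adapted accordingly.

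The numerical input needed is: $\hom(\II(m),\SS(-1))=0$ and $\rmext^i(\SS(-1),\II(m))=0$ for $i=0,1$, both obtained for $m<-1$ by applying suitable Hom functors to the defining triple \eqref{spin tr on X} and using \eqref{zero cohom}; $\rmext^1(\II(m),\II(m))=4$ from \eqref{ext(I,I)}; and the key dimension $\rmext^1(\II(m),\SS(-1))=2m^2-8m+7$, which I would derive by applying $\Hom(-,\SS(-1))$ to the $(m-1)$-twist $0\to\SS(m-2)\to\OO_X(m-1)^{\oplus 2}\to\II(m)\to 0$ of \eqref{spin tr on X} together with the standard cohomology formulas for twists of the spinor bundle on $X_2$. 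Relativizing these vanishings over $\mathbb{G}$ via the universal sheaf $\mathbb{I}$ from \eqref{univ on XxG}, base change shows that $\widetilde{\AA}_{\SS}$ in \eqref{sheaf AS I} is locally free of rank $2m^2-8m+7$ with fibers $\widetilde{\AA}_{\SS}\otimes\bk_x\cong\Ext^1(\II_x(m),\SS(-1))$ and that the relative $\EE xt^0_{\widetilde{p}}$ vanishes; the latter, by the mechanism that produces \eqref{univ ext}, yields a universal extension $\widetilde{\WW}_{\SS}$ on $\widetilde{P}\times X$ whose restriction to a closed point $[\widetilde{\xi}]$ is the pushout of \eqref{extension Spin I} along $\widetilde{\xi}$.

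Next I would construct the dense open subsets $\widetilde{\RR}_{\SS,m}\subset\widetilde{\YY}_{\SS,m}\subset\widetilde{P}$ of \eqref{subsets SI}. For reflexivity I apply $\mathbf{R}\HH om(-,\OO_X)$ to \eqref{extension Spin I}, use the spinor duality $\SS^{\vee}\cong\SS(-1)$, and identify $\EE xt^1(E_{\widetilde{\xi}},\OO_X)$ via the local/global Ext spectral sequence with an artinian sheaf supported at the zero scheme of a section of an ample sheaf on $S_x$ depending on $\widetilde{\xi}$; for generic $\widetilde{\xi}$ this scheme is zero-dimensional, forcing $E_{\widetilde{\xi}}\cong E_{\widetilde{\xi}}^{\vee\vee}$, exactly as in the proof of Theorem \ref{Theorem 4.9}. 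Stability of such reflexive $E_{\widetilde{\xi}}$ then follows from $c_1(E_{\widetilde{\xi}})=0$ together with the vanishing $h^0(E_{\widetilde{\xi}}(-q))=0$ for $q\geq 0$ (deduced from \eqref{extension Spin I}, \eqref{zero cohom}, and the non-effectivity of $\SS(-q-1)$ and $\II(m-q)$), which excludes any rank-one subsheaf $F\subset E_{\widetilde{\xi}}$ with $c_1(F)\geq 0$ via its reflexive hull $F^{\vee\vee}=\OO_X(q)\hookrightarrow E_{\widetilde{\xi}}$.

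Finally, the modular morphism $\Theta$ of \eqref{Theta isom SI} is a bijection onto its image, since for stable $E_{\widetilde{\xi}}$ the equality $\hom(\SS(-1),E_{\widetilde{\xi}})=1$ (from the Ext vanishings above combined with $\hom(\SS(-1),\SS(-1))=1$) recovers $\SS(-1)\subset E_{\widetilde{\xi}}$, and hence $[\widetilde{\xi}]$, uniquely from $[E_{\widetilde{\xi}}]$. Arranging the long exact sequences obtained by applying $\mathbf{R}\Hom(-,-)$ to \eqref{extension Spin I} into a commutative diagram analogous to \eqref{diag 113}, but with $\OO_X(-1)^{\oplus 2}$ replaced by $\SS(-1)$ and the leftmost vertical triple replaced by $\bk\rightarrowtail\bk\twoheadrightarrow 0$, gives $\rmext^1(E_{\widetilde{\xi}},E_{\widetilde{\xi}})=\rmext^1(\II(m),\SS(-1))-1+\rmext^1(\II(m),\II(m))=2m^2-8m+10$, which coincides with $\dim\widetilde{P}=(2m^2-8m+7)-1+\dim\mathbb{G}$. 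A bijective morphism of smooth irreducible varieties of the same dimension is an isomorphism by \cite[Ch.~2,~\S4.4,~Thm.~2.16]{Sh}, so $M_m$ is a smooth open subset of an irreducible component of $M_X(v)$; rationality follows from the projective bundle structure of $\widetilde{P}$ over the rational grassmannian $\mathbb{G}=\mathrm{Gr}(2,4)$, and $\widetilde{\WW}_{\SS}$ descends to a universal family making $M_m$ fine. The main obstacle in executing this plan is the explicit evaluation $\rmext^1(\II(m),\SS(-1))=2m^2-8m+7$ together with the generic smoothness argument confirming reflexivity on a dense open subset of $\widetilde{P}$; both follow the pattern established earlier in the paper but require careful bookkeeping of the spinor bundle cohomology.
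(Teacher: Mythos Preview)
Your proposal is correct and follows essentially the same approach as the paper, which proves Theorem~\ref{Theorem 4.2S} by literally repeating the argument for Theorem~\ref{Theorem 4.2} with the modifications spelled out in Remark~\ref{Remark 4.B I}: replace $\OO_X(-1)^{\oplus2}$ by $\SS(-1)$, pass from a Grassmann bundle to a projectivization (since $\hom(\SS(-1),\SS(-1))=1$), and replace the left vertical triple in diagram~\eqref{diag 113} by $\bk\rightarrowtail\bk\twoheadrightarrow 0$. Your outline reproduces precisely these steps, including the dimension count $\rmext^1(E,E)=\rmext^1(\II(m),\SS(-1))-1+4=2m^2-8m+10$.
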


\subsection{Third series.}\label{subsec 4.3}

Let $k\ge1$, $n=\lfloor\frac k2\rfloor+1$ and $m+n<0$. Consider on $X$ a sheaf $F$ of rank 2 and a variety $W$,
defined as follows.\\
(I) In the case $X=X_1$ the sheaf $F$ is determined from the exact triple
\begin{equation}\label{def of F 1}
	0\to\OO_{\P^3}(-1)\to\OO_{\P^3}^{\oplus 3}\to F\to 0.
\end{equation}
As is known (see, for example, \cite[Example 4.2.1]{H}), or
\cite[Remark 2]{AJT}), the moduli space $W$ of sheaves $F$ is
isomorphic to $\p3$, and the isomorphism $W\xrightarrow{\simeq}\p3$
is given by the formula $[F]\mapsto\Sing(F)$.\\
(II) In the case when $X=X_4$ is a complete intersection of a general pencil of hyperquadrics in $\p5$,
let $\p1\subset|\OO_{\p5}(2)|$ be the base of this pencil of quadrics, and let $\Gamma$ be
the hyperelliptic curve of genus 2, defined as the double covering $\rho:\ \Gamma\to
\P^1$, branched at points corresponding to degenerate quadrics of the pencil. Let
$\Gamma^*=\rho^{-1}({\p1}^*)$, where ${\p1}^*\subset\p1$ is an open subset of
nondegenerate quadrics of the pencil, and let $\Delta=\rho^{-1}(\p1\smallsetminus{\p1}^*)$. Any
point $y\in\Gamma^*$ corresponds to one of the two series of generating planes on a non-degenerate
4-dimensional quadric $Q(y):=\rho(y)$, and this series corresponds to a spinor bundle $\SS(y)$
of rank 2 on $Q(y)$ with $\det\SS(y)=\OO_{Q(y)}(1)$. In this case we set $F_y=\SS(y)|_X$.
It is known that $\SS(y)$ is an arithmetically Cohen--Macaulay sheaf (briefly: an ACM sheaf), that is, it satisfies the equalities:
\begin{equation}\label{ACM}
	h^i(\SS(y)(\ell))=0\ \ \ \ \ell\in\Z,\ \ \ \ i=1,2.
\end{equation}
From this and from the cohomology of the triple $0\to\SS(y)(-2)\to\SS(y) \to F_y\to0$
it follows that $F_y$ is also an ACM sheaf. Let now $y\in\Delta$,
that is, the degenerate quadric $Q(y)$ is a cone with its vertex at the point
say $z(y)$, so the projection $\mu:Q(y)\smallsetminus \{z(y)\}\to Q_y$ is defined,
where $Q_y$ is a smooth three-dimensional quadric. On $Q_y$
the spinor bundle $\SS_{Q_y}$ with $\det\SS_{Q_y}=\OO_{Q_y}(1)$ is defined, and 
we set $F_y=\mu^*\SS_{Q_y}|_X$. The sheaf $F_y$ is also
an ACM sheaf. In this case we set $W=\{[F_y]\ |\ y\in\Gamma\}\simeq
\Gamma$.\\
(III) In the case $X=X_5$, the sheaf $F$ is defined as the restriction to $X$ of the tautological bundle on the grassmannian $\Gr(2,5)$, twisted by
$\OO_X(1)$. The isomorphism class
$[F]$ of $F$ is uniquely defined, and we assume that $W$ is a point $\{[F]\}$.
Note that $F$ is also an ACM sheaf.

Note that in all cases (I)-(III) the universal family $\mathbb{F}$ of sheaves $F$ on $W\times X$ is defined. 

Consider the sheaf $E$ on $X$ obtained as a nontrivial extension
\begin{equation}\label{extension F}
	0\to F(-n)\to E\to\OO_S(m)\to0,\ \ \ \ \ \ S\in|\OO_X(k)|,\ \ \ \ \ k\ge1.
\end{equation}
Further, in case when $E$ is stable, by analogy with \eqref{vanish ext},
using the definition and the above properties of the sheaf $F$, the exact triple
$0\to\OO_X(-k)\to\OO_X\to \OO_S\to0$ and the condition $m+n<0$, we have the equalities
\begin{equation}\label{vanish ext 2}
	\begin{split}
		& \hom(E,E)=\hom(F,F)=\hom(\OO_S(m),\OO_S(m))=\hom(E,\OO_S(
		m))=\\
		& \hom(F(-n),E)=1,\ \ \ \ \hom(F(-n),\OO_S(m))=\rmext^2(\OO
		_S(m), F(-n))=0.
	\end{split}
\end{equation}

Let, as above, $\P^N=|\OO_X(k)|$, let $\mathbb{S}\subset\P^N \times X$ be a universal family of surfaces of degree $k$ in $X$, and let $W\times X\xleftarrow{q}W\times\P^N
\times X\xrightarrow{p}W\times\P^N$ be the projections. On $W\times\P^N$, by analogy with
\eqref{sheaf A}-\eqref{Ext0=0}, we have a locally free sheaf
\begin{equation}\label{sheaf A 2}
	\AA=\EE xt_p^1(\OO_{W\times\P^N}\boxtimes\OO_X(m)|_{W\times\mathbb{S}},\OO_{\P^N}
	\boxtimes \OO_X(-n)\otimes q^*\mathbb{F}),
\end{equation}
the construction of which commutes with the base change:
\begin{equation}\label{bch 2}
	\tau_{F,S}:\ \AA\otimes\bk_{\{F,S\}}\xrightarrow{\cong} \Ext^1(\OO_S(m),F(-n)),
\end{equation}
and we have an equality
\begin{equation}\label{Ext0=0 2}
	\EE xt_p^0(\OO_{W\times\P^N}\boxtimes\OO_X(m)|_{W\times\mathbb{S}},\OO_{W\times\P^N}
	\boxtimes\OO_X(-n)\otimes q^*\mathbb{F})=0.
\end{equation}
Consider the varieties $Y=\P(\AA^{\vee})$ and $X_Y=Y\times X$ with projections $Y\xrightarrow{
	\rho}\P^N$ and $Y\xleftarrow{p_Y}X_Y\xrightarrow{\rho_Y}W\times\P^N\times X$. From
\eqref{bch 2}, \eqref{Ext0=0 2} and \cite[Cor. 4.5]{Lan83} it follows that on $X_Y$
there is the universal family of extensions
\begin{equation*}
	0\to p_Y^*\OO_P(1)\otimes(\OO_{W\times\P^N}\boxtimes\OO_X(-n)\otimes q^*\mathbb{F})\to\VV
	\to\rho_Y^*(\OO_{W\times\P^N}\boxtimes\OO_X(m)|_{W\times\mathbb{S}})\to0.
\end{equation*}
Applying the functor $\rho_{Y*}$ to this triple, we obtain on $W\times\P^N\times X$
the universal extension, where $\UU=\rho_{Y*}\VV$:
\begin{equation}\label{univ ext 2}
	0\to p^*\AA^{\vee}\otimes(\OO_{W\times\P^N}\boxtimes\OO_X(-n))
	\to\UU\to\OO_{\P^N}\boxtimes\OO_X(m)|_{\mathbb{S}}\to0.
\end{equation}

Note that the extension \eqref{extension F} is specified by an element $\xi\in\Ext^1(\OO_S(m),
F(-n))$, considered as a homomorphism
\begin{equation}\label{hom xi 2}
	\xi:\ \Ext^1(\OO_S(m),\OO_X(-n))^{\vee}\to\bk.
\end{equation}
By the universality, the exact triple \eqref{univ ext 2}, restricted 
to $\{S\}\times X$, in view of the base change \eqref{bch 2} can be 
included with the extension \eqref{extension F} in a commutative 
diagram
\begin{equation}\label{diag 98}
	\begin{gathered}
		\xymatrix{
			& \Ext^1(\OO_S(m),F(-n))^{\vee}\otimes F(-n)\ \ar@{>->}[r] \ar[d]_-{ \xi} &
			\UU|_{\{S\}\times X}\ar@{>>}[r]\ar[d] & \OO_S(m) \ar@{=}[d] \\
			& F(-n)\ \ar@{>->}[r] & E_{[\xi]}\ar@{>>}[r] & \OO_S(m). }
	\end{gathered}
\end{equation}
The homomorphism $\xi$ in \eqref{hom xi 2} is non-zero, since the extension \eqref{extension F} is non-trivial. Thus, $\xi$ is an epimorphism. The class $[\xi]$ of the epimorphism $\xi$
modulo automorphisms of the space $\bk$ is a point of projectivization $\mathbb{P}(
\Ext^1(\OO_S(m),$\\ $\OO_X(-n))^{\vee})$. We have the subset $\YY_{k,m,n}$ in $\mathbb{P}
(\AA ^{\vee})$ defined as $\YY_{k,m,n}:=\{[\xi]\in \mathbb{P}(\AA ^{\vee})\ |\ \\ \textit{the extension}\ E_{[\xi]}\ \textit{in}\ \eqref{diag 98}\ \textit{is stable}\}$, with the modular morphism
$\Theta:\ \YY_{k,m,n}\to M_X(v),\ \ \ [\xi]\mapsto[E_{[\xi]}],$
defined verbatim as in \eqref{def Theta}. At the same time, as in the case of extension
\eqref{extension}, due to \eqref{vanish ext 2}, the extension \eqref{extension F} defined
by the sheaf $E$ is unique, that is, the class $[\xi]$ can be reconstructed from the point $[E]$. Thus, $\Theta$ is a bijection of the variety $\YY_{k, m,n}$ onto its
image $M_{k,m,n}$ in $M_X(v)$, and by analogy with \eqref{dim=dim} we have the equalities
\begin{equation}\label{dim=dim 2}
	\begin{split}
		& \dim M_{k,m,n}=\dim\YY_{k,m,n}=\dim\mathbb{P}(\Ext^1(\OO_S
		(m),F(-n))^{\vee})+\dim W+\\
		& +\dim|\OO_X(k)|=\mathrm{ext}^1(\OO_S(m),F(-n))+\dim W+N-1.
	\end{split}
\end{equation}
Applying the bifunctor $\mathbf{R}\Hom(-,-)$ to the triple \eqref{extension F}
and taking into account the equalities \eqref{vanish ext 2},
we get a commutative diagram:
\begin{equation*}\label{diag 93}
	\begin{gathered}
		\xymatrix{
			\underset{{}}{\Ext^1(\OO_S(m),F(-n)}/\bk\ \ar@{>->}[r]\ar@{>->}[d] &
			\underset{{}}{\Ext^1(E,F(-n))}/\bk\
			\ar@{>->}[r]\ar@{>->}[d] &
			\underset{{}}{\Ext^1(F(-n),F(-n))}\ar[d]^-{=}\\
			\Ext^1(\OO_S(m),E)\ \ar@{>->}[r] \ar@{>>}[d] & \Ext^1(E,E)\
			\ar@{>>}[r] \ar@{>>}[d] & \Ext^1(F(-n),E) \\
			\Ext^1(\OO_S(m),\OO_S(m)) \ar[r]^-{=} & \Ext^1(E,\OO_S(m)). & }
	\end{gathered}
\end{equation*}
From this diagram and the equalities $\mathrm{ext}^1(\OO_S(m),\OO_S(m))=N$ and $\rmext^1(F(-n), \\
F(-n))=\dim W$ it follows that $\mathrm{ext}^1(E,E)=\mathrm{ext}^1(\OO_S(m),F(-n))-1 +\dim W
+N$, from which, due to \eqref{dim=dim 2}, we find $\dim\Theta(\YY_{k,m,n})=\mathrm{ext}^1(E,E)
=\dim_{[E]} M_X(v).$ This means that $M_{k,m,n }$ is a smooth component of the moduli scheme
$M_X(v)$. From here, as in \eqref{Theta isom} and \eqref{Theta isom I}, we obtain an isomorphism
$\Theta:\ \YY_{k,m,n}\to M_{k,m,n}$, that is, $M_{k,m,n}$ is a smooth variety.
Moreover, from the diagram \eqref{diag 83} we obtain that $M_{k,m,n}$ is a fine
moduli space. Exact formulas for the dimensions of the varieties $M_{k,m,n}$
are obtained using \eqref{dim=dim 2} by analogy with the formulas \eqref{vanish ext}.
Thus, the following theorem holds.
\begin{theorem}\label{Theorem 4.3}
	Let $X$ be one of varieties $X_1$, $X_4$, $X_5$, let $k\ge1$, $n=\lfloor
	\frac k2\rfloor+1$, $m<-n$, and $M_X(v)$ be the Gieseker-Maruyama moduli scheme of semistable
	sheaves $E$ on $X$ with Chern character $v=\mathrm{ch}(E)$ determined from the triple
	\eqref{extension F}. Then\\
	(i) the family
\begin{equation*}
\begin{split}
& M_{k,m,n}=\{[E]\in M_X(v)\ |\ E\ \textit{is specified by the extension \eqref{extension F} in which}\ S\in|\OO_X(k)|,\\
& \textit{and the sheaf}\ F\ \textit{is determined by the above conditions (I)-(III)}\}
\end{split}
\end{equation*}
is a smooth irreducible component of the scheme $M_X(v)$ and is described as
the projectivization $\pi:\ \mathbb{P}(\AA^{\vee})\to W\times\P^N$ of the bundle $\AA^{\vee}$, where
the variety $W$ is defined by conditions (I)-(III) above, and $\AA$ 
is the locally free sheaf on $W\times\P^N$ defined by the formula 
\eqref{sheaf A 2}, where $\P^N=| \OO_X(k)|$;
this component is a rational variety for $X=X_1,\ X_2,\ X_5$, and 
non-rational for $X=X_4$; Moreover, $M_{k,m,n}$ is a fine moduli 
space, and all sheaves from $M_{k,m,n}$ are stable;\\
(ii) the dimension of the variety $M_{k,m,n}$ is given by the 
formulas:\\
(ii.1) $\dim M_{k,m,n}=\binom{k+3}{3}+3\binom{-m-n+k+3}{3}-\binom{-m -n+k+2}{3}-
3\binom{-m-n+3}{3}+\binom{-m-n+2}{3}+1$ for $X=X_1$;\\
(ii.2) $\dim M_{k,m,n}=4[\binom{k-m-n+4}{4}-\binom{k-m-n+2}{4}-\binom{-m -n+4}{4}
+\binom{-m-n+2}{4}]+\frac 23k^3+2k^2+\frac 73k$
for $X=X_4$;\\
(ii.3) $\dim M_{k,m,n}=\frac 53(k-m-n+1)^3+\frac{15}2(k-m-n+1)^2+\frac{65}6(k-m-n +1)-
\frac 53(-m-n+1)^3-\frac{15}2(-m-n+1)^2-\frac{65}6(-m-n+1)+\frac 56k^3 +\frac 52k^2+
\frac{8}{3}k-1$ for $X=X_5$.
\end{theorem}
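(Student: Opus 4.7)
The plan is to mirror the strategy of Theorems~\ref{Theorem 4.1} and~\ref{Theorem 4.2}, substituting the rank-two kernel $F(-n)$ for $\OO_X(-n)^{\oplus 2}$ or $\SS(-n)$. Using the vanishings~\eqref{vanish ext 2} and the universal family $\mathbb{F}$ on $W\times X$, I would form the locally free sheaf $\AA$ on $W\times\P^N$ of~\eqref{sheaf A 2} together with the base-change isomorphism~\eqref{bch 2}. The projectivization $\pi\colon\mathbb{P}(\AA^\vee)\to W\times\P^N$ then parametrizes nonzero extensions~\eqref{extension F} up to scalar, and applying $\rho_{Y*}$ to the tautological extension on $\mathbb{P}(\AA^\vee)\times X$ produces the universal family~\eqref{univ ext 2}. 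I would let $\YY_{k,m,n}\subset\mathbb{P}(\AA^\vee)$ be the (open) locus of stable extensions.

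The next step is to show that the modular morphism $\Theta\colon\YY_{k,m,n}\to M_X(v)$ is injective onto its image $M_{k,m,n}$. Here the equality $\hom(F(-n),E)=1$ in~\eqref{vanish ext 2} is essential: it forces the subsheaf $F(-n)\hookrightarrow E$ to be unique, so $[\xi]$ can be reconstructed from $[E]$. Applying $\mathbf{R}\Hom(-,-)$ to~\eqref{extension F} and chasing the resulting nine-term $\Ext$ diagram (analogous to~\eqref{diag 113}), I would derive
\[
\rmext^1(E,E)=\rmext^1(\OO_S(m),F(-n))-1+\dim W+N,
\]
which by~\eqref{dim=dim 2} equals $\dim\YY_{k,m,n}$. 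Deformation theory then gives $\dim\YY_{k,m,n}\le\dim_{[E]}M_X(v)\le\rmext^1(E,E)$, so all inequalities are equalities; hence $M_{k,m,n}$ is open in a single smooth irreducible component of $M_X(v)$. Zariski's Main Theorem (applied as in~\eqref{Theta isom}) then promotes the bijection $\Theta$ to an isomorphism $\YY_{k,m,n}\simeq M_{k,m,n}$, and fineness is inherited from the universal family on $\YY_{k,m,n}$.

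For the rationality assertions, the projective-bundle structure $M_{k,m,n}\simeq\mathbb{P}(\AA^\vee)\to W\times\P^N$ reduces the question to the base. For $X=X_1$ one has $W\simeq\P^3$ and for $X=X_5$ the variety $W$ is a point, so the base $W\times\P^N$ is rational and hence so is $M_{k,m,n}$. For $X=X_4$ the base $W\simeq\Gamma$ is the hyperelliptic curve of genus two, and I would prove irrationality by observing that $\mathbb{P}(\AA^\vee)$ admits a surjection to $\Gamma$, so its Albanese variety surjects onto the Jacobian of $\Gamma$ and is therefore nontrivial; since a rational variety has trivial Albanese, this yields irrationality. I expect this to be the most delicate step, as one must be sure the Jacobian factor is not eliminated by any birational collapse; the Albanese invariant is what guarantees this.

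Finally, the explicit dimension formulas (ii.1)--(ii.3) reduce via~\eqref{dim=dim 2} to a Riemann--Roch computation of $\rmext^1(\OO_S(m),F(-n))$. I would resolve $\OO_S$ by $0\to\OO_X(-k)\to\OO_X\to\OO_S\to0$, resolve $\OO_{X_2}$ and $\OO_{X_4}$ in the ambient projective space via~\eqref{resolns}, and handle $F$ on $X_5$ directly from its description as the restriction of the tautological bundle on $\mathrm{Gr}(2,5)$. Adding the contribution $\dim W+N-1$ then yields the stated polynomial expressions; this part is routine but bookkeeping-heavy.
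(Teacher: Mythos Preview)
Your proposal is correct and follows essentially the same approach as the paper's proof: both construct $\AA$ on $W\times\P^N$ via \eqref{sheaf A 2}, pass to $\P(\AA^\vee)$ with its universal extension, use $\hom(F(-n),E)=1$ from \eqref{vanish ext 2} to get injectivity of $\Theta$, and compare $\dim\YY_{k,m,n}$ with $\rmext^1(E,E)$ via the same $\Ext$-diagram chase to conclude smoothness and that $\Theta$ is an isomorphism. The one place you add something the paper leaves implicit is the irrationality for $X=X_4$: the paper simply asserts it from $W\simeq\Gamma$, whereas your Albanese argument makes this explicit and is the right invariant to invoke.
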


\subsection{Reflexive stable sheaves of general type with $c_1=0$ on $X_4$ and $X_5$.} \label{subsec 4.4}

To conclude this section, we prove a theorem concerning reflexive sheaves in
the components of the Gieseker-Maruyama scheme constructed in Theorems
\ref{Theorem 4.1}-\ref{Theorem 4.3}.

For $X=X_4$ or $X_5$ we denote by $B(X)$ the base of the family of lines on $X$. As is known,
$B(X_4)$ is a smooth abelian surface, and $B(X_5)\simeq\P^2$.
Let us give the following definition, which will be key for use in section 6.
\begin{definition}\label{def 4.10}
	A reflexive sheaf $E$ of rank 2 with first Chern class $c_1(E)=0$ on $X=X_4$ or $X=X_5$
	is called a sheaf of general type if for any line $l\in B(X)$ not passing
	through points of  $\Sing E$ we have either $E|_l\cong\OO_{\P^1}^{\oplus2}$, and such lines
	constitute a dense open set in $B(X)$, or $E|_l\cong\OO_{\P^1}(m)
	\oplus\OO_{ \P^1}(-m)$, where $m>0$, and the set $B_2(X):=\{l\in B(X)\ |\ E|_l
	\cong\OO_{\P^1}(m)\oplus\OO_{\P^1}(-m),\ m\ge2\}$ has dimension $\le0$.
\end{definition}
The following theorem gives examples of infinite series of irreducible components of 
Gieseker-Maruyama moduli schemes for varieties $X_4$ and $X_5$ such that general points of these
components are reflexive sheaves of general type.
\begin{theorem}\label{Theorem 4.4}
	Consider the following infinite series of components of the Gieseker--Maruyama moduli schemes of
	rank two semistable sheaves on varieties $X_4$ and $X_5$:\\
	(I) series of components $M_{2,m,1}$ from Theorem \ref{Theorem 4.1} for the case $k=2$, $m\le-2$, $n=1$;\\
	(II) series of components $M_{1,m,1}$ from Theorem
	\ref{Theorem 4.3} for the case $k=n=1$, $m\le-1$.\\
	Then the general sheaf in each of these components is a reflexive stable sheaf
	of general type.
\end{theorem}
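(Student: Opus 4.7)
Reflexivity and stability of a general sheaf in each component are immediate from Theorems \ref{Theorem 4.1} and \ref{Theorem 4.3}: in each $M_{k,m,n}$ from these theorems, reflexive sheaves form a dense open subset, and every sheaf is stable by construction. What remains is to verify the two line-restriction conditions of Definition \ref{def 4.10}, which I would address by restricting the defining extensions \eqref{extension} and \eqref{extension F} to a line $l \in B(X)$ avoiding $\Sing E$, a finite set for the general $E$ by the argument of Theorem \ref{Theorem 4.9}.

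For a generic line $l \not\subset S$, the relevant local $\mathcal{T}or$ vanishes (since $F(-n)$ is locally free and $l$ meets $S$ transversally), and one obtains the restricted short exact sequence
\[
0 \to F(-n)|_l \to E|_l \to \OO_S(m)|_l \to 0,
\]
where $F = \OO_X^{\oplus 2}$ in series (I) and $F$ is the spinor or tautological bundle in series (II), and $\OO_S(m)|_l$ is a length-$k$ torsion sheaf on $\P^1$. Invoking the classical generic balancedness $F|_l \cong \OO_l \oplus \OO_l(1)$ for the spinor and tautological bundles, a direct cohomology computation in the sequence above gives $h^0(E|_l) = 2$ in both series. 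Since $c_1(E|_l) = 0$, I write $E|_l \cong \OO_{\P^1}(a) \oplus \OO_{\P^1}(-a)$ with $a \ge 0$, and $h^0(E|_l) = 2$ then forces $a \leq 1$, so no such $l$ lies in $B_2(X)$. To pass from $a \leq 1$ to $a = 0$ on a dense open subset of $B(X)$, I would exhibit one line with $E|_l \cong \OO_{\P^1}^{\oplus 2}$ by analyzing the restriction map $\Ext^1(\OO_S(m), F(-n)) \to \Ext^1(\OO_{l \cap S}, F(-n)|_l)$ and showing that for generic $\xi$ and generic $l$ it produces a class whose associated pushout is the trivial bundle on $\P^1$; upper semicontinuity of $h^0(E|_l(-1))$ on $B(X)$, a quantity that vanishes iff $a = 0$, would then propagate the trivialization to a dense open subset.

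For lines $l \subset S$, by contrast, $\mathcal{T}or_1^{\OO_X}(\OO_S(m), \OO_l) \cong \OO_l(m-k)$ is nonzero, and the resulting four-term exact sequence, combined with the vanishing $\Ext^1_{\P^1}(\OO_{\P^1}(m), \OO_{\P^1}(-m)) = H^1(\OO_{\P^1}(-2m)) = 0$ for $m \leq -1$, yields a splitting $E|_l \cong \OO_l(-m) \oplus \OO_l(m)$ with $a = |m|$ in both series. Hence, modulo the finitely many lines through $\Sing E$ which are excluded by Definition \ref{def 4.10}, one has the inclusion $B_2(X) \subseteq \{l \in B(X) : l \subset S\}$ for the surface $S$ associated with the extension defining $E$.

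The final step is a dimension count for $\{l \in B(X) : l \subset S\}$ for general $S$. Since $\dim B(X) = 2$ (with $B(X_4)$ an abelian surface and $B(X_5) \cong \P^2$) and the condition $l \subset S$ for $S \in |\OO_X(k)|$ imposes $h^0(\OO_l(k)) = k+1$ independent linear conditions on $S$ (provided the restriction map $H^0(\OO_X(k)) \to H^0(\OO_l(k))$ is surjective, which holds for $l \subset X$ by standard vanishing), a straightforward incidence-variety calculation shows that for general $S$ the set $\{l \in B(X) : l \subset S\}$ is empty in series~(I), where $k+1 = 3 > 2 = \dim B(X)$, and zero-dimensional in series~(II), where $k+1 = 2 = \dim B(X)$. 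The main obstacle in carrying out this plan is precisely this last count: one must verify the surjectivity of the restriction map for $l$ a general line in $X_4$ or $X_5$, and check that the incidence variety projects to $|\OO_X(k)|$ with the expected generic fiber dimension; the remaining steps reduce to standard cohomology on $\P^1$ and classical facts about the restrictions of spinor and tautological bundles.
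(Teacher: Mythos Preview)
Your proposal is correct and follows essentially the same route as the paper: split lines into $l\subset S$ versus $l\not\subset S$, restrict the defining extension, bound the splitting type on the latter via the restricted triple, and handle the former by an incidence/dimension count. The paper carries out the step you only sketch---surjectivity of the restriction map $\Ext^1(\OO_S(m),F(-n))\to\Ext^1(\OO_{l\cap S},F(-n)|_l)$---by identifying both sides with spaces of sections via \eqref{isom Ext} and then using an explicit Koszul-type resolution on $S$; this is the one place where your outline would need real work.

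One imprecision to flag: in series (II) you invoke ``generic balancedness'' $F|_l\cong\OO_l\oplus\OO_l(1)$, but your inclusion $B_2(X)\subseteq\{l:l\subset S\}$ actually requires this splitting for \emph{every} line $l\not\subset S$, not just a general one---otherwise a curve of jumping lines for $F$ could contribute to $B_2(X)$. Fortunately, the specific bundles here (the tautological bundle on $\Gr(2,5)$ for $X_5$, and restrictions of spinor bundles for $X_4$) are uniform on lines, so $F|_l\cong\OO_l\oplus\OO_l(1)$ holds without exception; the paper uses this implicitly by writing the restricted triple with $\OO_{\P^1}\oplus\OO_{\P^1}(-1)$ for ``any line'' without comment. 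With that strengthening, your argument goes through.
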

\begin{proof}
	(I) For $[E]\in M_{2,m,1}$ we have the triple
	\eqref{extension} for $k=2,\ m\le-2,\ n=1$ and $S\in\P^N=|\OO_X(2)|$:
	\begin{equation}\label{extension k=2,n=1}
		0\to\OO_X(-1)^{\oplus 2}\to E\to\OO_S(m)\to 0,\ \ \ \ \ \ \ \ \ \ \ S\in|\OO_X(2)| .
	\end{equation}
	Consider the variety $\Pi:=\{(\P^1,S)\in B(X)\times\P^N\ |\ \P^1\subset S\}$ with
	projections $B(X)\xleftarrow{pr_1}\Pi\xrightarrow{pr_2}\P^N$. An easy calculation
	shows that the dimension of a fiber of the projection $pr_1$ is equal to $N-3$. On the other hand,
	it is known \cite{Isk} that $\dim B(X)=2$ for $X=X_4$. Hence $\dim\Pi=N-1<N=\dim\P^N$.
	Therefore, let us take in the triple \eqref{extension k=2,n=1}
	a surface $S$ such that $S\in\P^N\smallsetminus(pr_2(\Pi))$. By the  definition of $\Pi$ it
	means that there are no projective lines on the surface $S$. Therefore, taking an arbitrary projective 
	line $\P^1\subset X$ disjoint from $\Sing E$, we have a scheme $Z_2:=S\cap\P^1$ of length $\ell(Z_2)
	=2$. Restricting the triple \eqref{extension k=2,n=1} to $\P^1$ we obtain an exact triple
	\begin{equation}\label{ext restr}
		0\to\OO_{\P^1}(-1)^{\oplus 2}\to E|_{\P^1}\to\OO_{Z_2}\to0.
	\end{equation}
This restriction can be considered as a homomorphism of spaces of extensions
$r:\ \Ext^1(\OO_S(m),\OO_X(-1)^{\oplus2}))\to\Ext^1(\OO_{Z_2},\OO_{\P^1}( -1)^{\oplus2}))$,
for which $r(\xi)=\xi_{\P^1}$, where the elements $\xi$ and $\xi_{\P^1}$ define extensions
\eqref{extension k=2,n=1} and \eqref{ext restr} respectively. The homomorphism $r$ decomposes
into a composition
$r:\ \Ext^1(\OO_S(m),\OO_X(-n)^{\oplus2}))\underset{\cong}{\xrightarrow{\varphi}}H^0(
\OO_S(1-m)^{\oplus2})\xrightarrow{\otimes\OO_{\P^1}}H^0(\OO_{Z_2}^{\oplus2})\underset{
	\cong}{\xrightarrow{\varphi_{\P^1}^{-1}}}\Ext^1(\OO_{Z_2},\OO_{\P^1}(-1)^{\oplus2} ))$,
where $\varphi$ is an isomorphism \eqref{isom Ext} for $k=2,\ n=1$, and the isomorphism $\varphi_{
	\P^1}$ is constructed by analogy with $\varphi$. Let us check that $r$ is an epimorphism. For this
it is enough to verify that the homomorphism $\otimes\OO_{\P^1}$ is epimorphic. Consider the embedding
$X=X_q\hookrightarrow\P^{q+1}$, $q\in\{4,5\}$, and let $L\cong\P^{q-1}$ be a
subspace of
codimension 2 in $\P^{q+1}$, containing the subscheme $Z_2$ and intersecting $S$ along some
0-dimensional scheme $Z$. (A simple calculation of parameters shows that for a general $L$ passing through $Z_2$
the condition $\dim Z=0$ is satisfied.) Then we have the $\OO_S$-Koszul resolution of the sheaf
$\OO_Z^{\oplus2}$:\ \ $0\to\OO_S(-m-1)^{\oplus2}\to\OO_S(-m)^{\oplus4}\to\OO_S(-m+1 )^{
	\oplus2}\xrightarrow{\otimes\OO_Z}\OO_Z^{\oplus2}\to0$. Using the triple \eqref{tr X,S} and
the condition $m\le-2$, we find $H^2(\OO_S(-m-1))=H^1(\OO_S(-m))=0$, and the resolution gives
an epimorphism $h^0(\varepsilon):\ H^0(\OO_S(-m+1)^{\oplus2})\to H^0(\OO_Z^{\oplus2})$. Thus,
$\otimes\OO_{\P^1}:\ H^0(\OO_S(-m+1)^{\oplus2})\xrightarrow{h^0(\otimes\OO_Z)}H^0(
\OO_Z^{\oplus2})\overset{h^0(\otimes\OO_{Z_2})}{\twoheadrightarrow}H^0(\OO_{Z_2}^{
\oplus2})$ is an epimorphism.

Since $\P^1\cap\Sing E=\varnothing$, then the sheaf $E|_{\P^1}$ is locally free, so
the triple \eqref{ext restr} shows that $E|_{\P^1}\cong\OO_{\P^1}(a)\oplus\OO_{\P^1}(-a
),\ 0\le a\le1$. On the other hand, for general $\xi_{\P^1}\in\Ext^1(\OO_{Z_2},\OO_{\P^1}
(-1)^{\oplus2}))$ in \eqref{ext restr} we have $E|_{\P^1}\cong\OO_{\P^1}^{\oplus2}$. From here
in view of surjectivity of $r$ it follows that for general $\xi\in\Ext^1(\OO_S(m),\OO_X(-1)
^{\oplus2}))$ the sheaf $E$ in \eqref{extension k=2,n=1} is a sheaf of general type.

(II) Consider the case $[E]\in M_{1,m,1}$ from Theorem \ref{Theorem 4.3}.
We have a triple \eqref{extension} for $k=n=1$, $m\le-1$, and $S\in\P^N=|\OO_X(1)|$:
\begin{equation}\label{extension k=n=1}
	0\to F(-1)\to E\to\OO_S(m)\to 0,\ \ \ \ \ \ \ \ \ \ \in|\OO_X(1)|.
\end{equation}
In this case, a general surface $S\in|\OO_X(1)|$ for both varieties $X=X_4$ and $X=X_5$
is a smooth del Pezzo surface containing a finite number of lines and satisfying the
condition $S\cap\Sing E=\varnothing$. Then the sheaf $E|_{\P^1}$ is locally free, and the triple
\eqref{extension k=n=1} restricted to any line $\P^1\subset S$ gives an epimorphism
$E|_{\P^1}\twoheadrightarrow\OO_{\P^1}(m)$, hence $E|_{\P^1}\cong\OO_{\P^1}(a)\oplus\OO
_{\P^1}(-a), \ a\le m$. Let now $\P^1$ be any line on $X$ such that $\P^1\not
\subset S$ and $\P^1\cap\Sing E=\varnothing$. Then, as before, the sheaf $E|_{\P^1}$ is locally
free, and restricting the triple \eqref{extension k=n=1} to $\P^1$ we obtain an exact triple
\begin{equation*}\label{ext restr S}
	0\to\OO_{\P^1}\oplus\OO_{\P^1}(-1)\to E|_{\P^1}\to\bk_x\to0,\ \ \ \ \ \ x =\P^1\cap S.
\end{equation*}
By analogy with the case (I), it suffices for us to check the surjectivity of the homomorphism
$r:\ \Ext^1(\OO_S(m),\SS(-1))\to\Ext^1(\bk_x,\OO_{\P^1}\oplus\OO_{\P^1} (-1))$, which is in
turn equivalent to the surjectivity of the homomorphism $h^0(\otimes\OO_Z):\ H^0(\SS(-m)|_S
)\to H^0(\OO_Z^{\oplus2})$ for a 0-dimensional scheme $Z=L\cap S$ containing the point $x$,
where, as above, $L$ is a subspace of codimension 2 in $\P^{q+1}$. The surjectivity of
$h^0(\otimes\OO_Z)$ for $m\le-1$ is checked using the resolution $0\to F(-m-2)|_S\to
F(-m-1)^{\oplus2}|_S\to F(-m)|_S\xrightarrow{\otimes\OO_Z}\OO_Z^{\oplus2}\to0$.

Note that the reflexivity and the stability of general $E$ in \eqref{extension k=2,n=1} and
\eqref{extension k=n=1} follow from Theorem \ref{Theorem 4.9}.
\end{proof}

\vspace{1cm}

\section{Moduli spaces of rank two semistable sheaves with maximal third Chern class on the three-dimensional quadric $X_2$}

\subsection{General case of rank two semistable sheaves with $-1\le c_1\le0$ and maximal class $c_3$ on the quadric $X_2$.}\label{subsec 5.1}

Stable sheaves $E$ of rank 2 with maximal third Chern class, described in Theorem
\ref{Theorem 3.1}, are special cases of sheaves included in the exact triples
\eqref{extension}, \eqref{extension I}, \eqref{extension Spin}, \eqref{extension Spin I}.
Namely, for a sheaf $E$ with $v=\ch(E)=(2,cH,dH^2,e[\mathrm{pt}])$ we have:\\
(1) For $c=-1$, non-integer $d\le-\frac32$ and $e=d^2-2d+\frac5{12}$, according to the statement (1.4)
of Theorem 3.1, the sheaf $E$ is included in the exact triple \eqref{extension} with $k=n=1,\ m=d-\frac12$,
$S=Q_2$.\\
(2) For $c=-1$, integer $d\le-1$ and $e=d^2-2d+\frac16$, according to the statement (1.3) of Theorem 3.1,
the sheaf $E$ is included in the exact triple \eqref{extension I} with $m=d$, $S=Q_2$.\\
(3) For $c=0$, non-integer $d\le-\frac32$ and $e=d^2+\frac14$, according to the statement (2.4)
of Theorem 3.1, the sheaf $E$ is included in the exact triple \eqref{extension Spin} with $k=n=1,\
m=d+\frac12$, $S=Q_2$.\\
(4) For $c=0$, integer $d\le-3$ and $e=d^2$, according to the statement (2.6) of Theorem 3.1, the sheaf $E$
is in the exact triple \eqref{extension Spin I} with $m=d$, $S=Q_2$.

In this section we prove the following theorem, which gives a complete description of moduli spaces
of rank 2 semistable sheaves with maximal third Chern class on the quadric $X_2$ for
of all possible values of the second Chern class $c_2$ in cases (1)-(4) above.

\begin{theorem}\label{moduli with c3 max}
	Let $X=X_2$ be the quadric and $M_X(v)$ be the Gieseker-Maruyama moduli scheme
	of rank 2 semistable sheaves $E$ on $X$ with Chern classes
	$(c_1,c_2,c_3)$, where $c_1\in\{-1,0\}$, $c_2\ge0$,
	$c_3=c_{3\mathrm{max}}$ is maximal for every $c_1$ and $c_2$, and
	\begin{equation*}
		v=\ch(E)=(2,c_1H,\frac12(c_1^2-c_2)H^2,\frac12(c_{3\mathrm{max}}+\frac23c_1^3-c_1c_2)
		[\mathrm{pt}])
	\end{equation*}
according to \eqref{c vers ch 3}. Then the scheme $M_X(v)$ is an irreducible and smooth
rational projective variety, all sheaves from $M_X(v)$ are stable, a general sheaf from
$M_X(v)$ is reflexive, and $M_X(v)$ is a fine moduli space. Moreover, we have
the following statements.\\
(i) For $c_1=-1$, an even $c_2=2p$, $p\ge2$, and $c_{3\mathrm{max}}=\frac12c_2^2$
the variety $M_X(v)$ is the grassmannization of 2-dimensional quotient spaces of the vector
bundle $\AA^{\vee}$ of rank $\rk\AA=\frac14(c_2+2)^2$ on the space $\P^4$,
defined by the first formula \eqref{sheaf A} for $n=1$ and $m=-p$. In this case $\dim M_X(v)=
\frac12(c_2+2)^2$.\\
(ii) For $c_1=-1$, an odd $c_2=2p+1$, $p\ge1$, and $c_{3\mathrm{max}}=\frac12(c_2^2-1)$
the variety $M_X(v)$ is the grassmannization of 2-dimensional quotient spaces of the vector
bundle $\widetilde{\AA}^{\vee}$ of rank $\rk\widetilde{\AA}^{\vee}=\frac14(c_2+1)(c_2+3)$ on the grassmannian $\mathbb{G}$ defined by the second formula \eqref{sheaf A} for
$m=-p$. In this case $\dim M_X(v)=\frac12(c_2+1)(c_2+3)$.\\
(iii) For $c_1=0$, an odd $c_2=2p+1$, $p\ge1$, and $c_{3\mathrm{max}}=\frac12(c_2^2+1)$
the variety $M_X(v)$ is the projectivization of the vector bundle $\AA_{\SS}^{\vee}$ of
rank $\rk\AA_{\SS}^{\vee}=\frac12(c_2+1)(c_2+3)$ on the space $\P^4$,
defined by the formula \eqref{sheaf AS} for $n=1$ and $m=-p$. In this case $\dim M_X(v)=
\frac12c_2^2+2c_2+\frac92$.\\
(iv) For $c_1=0$, an even $c_2=2p$, $p\ge3$, and $c_{3\mathrm{max}}=\frac12c_2^2$
the variety $M_X(v)$ is the projectivization of the vector bundle $\widetilde{\AA}
_{\SS}^{\vee}$ of rank $\rk\widetilde{\AA}_{\SS}^{\vee}= \frac12c_2^2+2c_2+1$ on
the grassmannian $\mathbb{G}$, defined by the formula \eqref{sheaf AS I} for $n=1$ and $m=1-p$.
In this case $\dim M_X(v)=\frac12c_2^2+2c_2+4$ .\\
\end{theorem}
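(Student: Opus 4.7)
The idea is to upgrade Theorems \ref{Theorem 4.1}, \ref{Theorem 4.1S}, \ref{Theorem 4.2}, \ref{Theorem 4.2S} in the specific cases where the Chern classes attain the maximal values prescribed by Theorem \ref{Theorem 3.1}: the smooth dense open subsets $M_{k,m,n}$ and $\widetilde{M}_m$ constructed there will be shown to coincide with the entire moduli scheme $M_X(v)$. Converting $v=\ch(E)$ via \eqref{c vers ch 2}--\eqref{c vers ch 3}, cases (i)--(iv) of the present theorem correspond respectively to cases (1.4), (1.3), (2.4), (2.6) of Theorem \ref{Theorem 3.1}, with parameters $(k,n,m)=(1,1,-p)$ in (i) and (iii), and $m=-p$ in (ii) and (iv).

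First I would show that every $[E]\in M_X(v)$ fits into an exact triple of the prescribed form. Gieseker-semistability of $E$ implies $2$-semistability by the implication chain recalled in Section 2, hence by Proposition \ref{Prop 2.1}(i) the sheaf $E$ is $\nu_{\alpha,\beta}$-semistable for $\beta<\mu(E)$ and $\alpha\gg 0$. Applying the corresponding statement (1.3), (1.4), (2.4) or (2.6) of Theorem \ref{Theorem 3.1} — and noting that in all four destabilizing triples both the subobject and the quotient are honest sheaves (not shifts) — embeds $E$ as the middle term of one of the short exact sequences \eqref{extension}, \eqref{extension I}, \eqref{extension Spin}, or \eqref{extension Spin I} with the stated parameters.

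Next I would establish the converse: every non-trivial extension of the prescribed form is Gieseker-stable, that is, in the notation of Section 4, $\YY_{1,-p,1}=\mathcal{G}r(\AA^\vee,2)$ in case (i), and analogously for the other three cases. Following the reflexivity-of-the-double-dual argument from the proof of Theorem \ref{Theorem 4.9}, the cohomology of the extension triple via \eqref{zero cohom}, \eqref{tr X,S}, \eqref{spin tr on X} gives $h^0(E(q))=0$ for every $q\geq 0$, which rules out any rank one subsheaf of $E^{\vee\vee}$ of slope $\geq \mu(E)$ and hence of $E$ itself. For $c_1=-1$ semistability automatically implies stability because the slope $-\tfrac12$ is non-integral; for $c_1=0$ any strictly semistable sheaf would contain a rank one ideal subsheaf of slope zero, which is excluded by the same vanishing together with the uniqueness of the canonical subsheaf $\OO_X(-n)^{\oplus 2}$ or $\SS(-n)$ recorded in \eqref{vanish ext0} and \eqref{vanish ext0I}.

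Combining the two directions, the modular map $\Theta$ of \eqref{def Theta} is defined on the whole grassmannization or projectivization and is a bijection onto $M_X(v)$; by Theorems \ref{Theorem 4.1}--\ref{Theorem 4.2S} it is an isomorphism onto a smooth irreducible rational component, which therefore equals the entire $M_X(v)$, yielding stability of every sheaf, density of the reflexive locus, and the fine moduli property. The rank computations reduce to $\rmext^1(\OO_S(m),\OO_X(-n))\cong h^0(\OO_S(k-m-n))$ (with its spinor analogue using \eqref{spin tr on X} in cases (iii), (iv)), evaluated on the quadric surface $Q_2\simeq\P^1\times\P^1$; for instance in case (i) one gets $h^0(\OO_{\P^1\times\P^1}(p,p))=(p+1)^2=\tfrac14(c_2+2)^2$. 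The main obstacle is the stability step: ruling out non-generic unstable or strictly semistable extensions requires the maximality of $c_3$ via Theorem \ref{Theorem 3.1} together with the rigidity of the canonical subsheaf in the extension triple, so as to forbid any rank one destabilizer or any decomposition through a pair of rank one ideal sheaves.
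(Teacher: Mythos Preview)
Your overall strategy matches the paper: use Theorem \ref{Theorem 3.1} to show every semistable sheaf of class $v$ sits in the prescribed extension, then show conversely that every point of the grassmannization (resp.\ projectivization) parametrizes a Gieseker-stable sheaf, so that the open sets $\YY_{1,m,1}$, $\widetilde{\YY}_m$, $\YY_{\SS,1,m,1}$, $\widetilde{\YY}_{\SS,m}$ of Theorems \ref{Theorem 4.1}--\ref{Theorem 4.2S} coincide with the whole parameter space and $\Theta$ becomes a global isomorphism.

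The gap is in your stability step. The vanishing $h^0(E)=0$ you extract from the extension triple, together with the double-dual trick, is precisely the argument of Theorem \ref{Theorem 4.9}(2), and that argument genuinely needs $E$ reflexive: a destabilizing subsheaf $\II_Z(q)\hookrightarrow E$ with $q\ge 0$ only yields a section of $E$ after passing to $\OO_X(q)\hookrightarrow E^{\vee\vee}$, and for an arbitrary point of the grassmannization the sheaf $E$ need not be reflexive (nor a priori torsion-free), so you cannot read off $h^0(E^{\vee\vee})=0$ from the extension for $E$. Your last paragraph acknowledges this as the main obstacle but does not supply a mechanism to close it.

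The paper bypasses this entirely by working with tilt stability rather than classical slope arguments. The extension $0\to\OO_X(-1)^{\oplus2}\to E\to\OO_S(-p)\to0$ (and its three analogues) exhibits $E$ as strictly $\nu_{\alpha,\beta}$-semistable on the wall it induces, with Jordan--H\"older factors $\OO_X(-1),\OO_X(-1),\OO_S(-p)$. Any subobject destabilizing $E$ \emph{above} the wall must then have its own JH factors drawn from this list; but the hypothesis that the extension class spans a $2$-dimensional subspace of $\Ext^1(\OO_S(-p),\OO_X(-1))$ forces $\Hom(E,\OO_X(-1))=0$, which rules out every such candidate. Hence $E$ is tilt-stable for $\alpha\gg0$, and Proposition \ref{Prop 2.1}(i) gives $2$-stability and thus Gieseker-stability. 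The spinor cases (iii)--(iv) are handled identically with $\SS(-1)$ in place of $\OO_X(-1)^{\oplus2}$. This wall-crossing argument works uniformly for every extension in the family, with no reflexivity or torsion-freeness assumed in advance; that is the idea your proposal is missing.
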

\begin{proof} 
(i)-(ii) Let us show that any sheaf $E$ defined by the extension \eqref{extension} for
the case $k=n=1,\ m=-p$, $S\in|\OO_X(1)|$:
\begin{equation}\label{ext 12}
	0\to\OO_X(-1)^{\oplus 2}\to E\to\OO_{S}(-p)\to0,\ \ \ \ \ p\ge2,
\end{equation}
or extension \eqref{extension I} for $m=-p$:
\begin{equation}\label{ext 12 J}
	0\to\OO_X(-1)^{\oplus 2}\to E\to\JJ_{\P^1,S}(-p)\to0,\ \ \ \ \ p\ge1,
\end{equation}
is stable, that is, according to the definitions \eqref{def Y} and \eqref{def tilde Y}, that
\begin{equation}\label{Y=Gr}
	\YY_{1,m,1}={\GG}r(\AA^{\vee},2),\ \ \ m\le-2;\ \ \ \ \ \ \ \ \ \ \ \
	\widetilde{\YY}_m={\GG}r(\widetilde{\AA}^{\vee},2),\ \ \ m\le-1.
\end{equation}
Indeed, specifying an extension \eqref{ext 12} is equivalent to specifying an element of the space
$\Ext^1(\OO_{S}(-p),\OO_X(-1)^{\oplus2})$.
For a semistable sheaf $E$, the triple \eqref{ext 12} gives its tilt
Harder--Narasimhan filtration in the sense of \cite[Lem. 3.2.4]{BMT} if $E$ is tilt-unstable.
The Harder--Narasimhan tilt filtration factors are unique, that is, $E$ determines
a surface $S$ and a subsheaf $\OO_X(-1)^{\oplus 2}$. In this case, the group $\GL(2)$ acts
by automorphisms on the sheaf $\OO_X(-1)^{\oplus 2}$ without changing the isomorphism class of the sheaf $E$. This
means that there is a subspace in 
$\Ext^1(\OO_{S}(-p),\OO_X(-1))$. If this subspace is zero-dimensional, then $E$ is a
direct sum and, therefore, is unstable. Let us assume that this subspace is one-dimensional.
Then the morphism $\OO_{S}(-p)\to\OO_X(-1)^{\oplus2}[1]$ factorizes through $\OO_X(-1)[1]$.
Then the octahedron axiom implies the existence of a mapping $E\onto\OO_X(-1)$, contrary to
stability of $E$.
	
Conversely, let a 2-dimensional subspace in $\Ext^1(\OO_{S}(-p),\OO_X(-1))$ be given. 
A choice of two basis vectors in this subspace
gives an extension \eqref{ext 12}. Here $E$ is strictly semistable along the induced wall
$W$. Its Jordan-H\"{o}lder filtration factors along the wall are two copies of the sheaf $\OO_X(-1
)$ and one copy of the sheaf $\OO_{S}(-p)$. Then the Jordan--H\"{o}lder factors of any
destabilizing subobject for $E$ are contained among the specified sheaves. By construction
we have $\Hom(E,\OO_X(-1))=0$. Therefore, neither the sheaf $\OO_{S}(-p)$ nor the extension of the sheaf
$\OO_{S}(-p)$ by $\OO_X(-1)$ cannot be a subobject of $E$.

In the case of the extension \eqref{ext 12 J} the reasoning is similar to the case of the extension
\eqref{ext 12}. Now applying statements (i) and (ii.2) of Theorems \ref{Theorem 4.1} and
\ref{Theorem 4.2} to the varieties $\YY_{1,m,1}={\GG}r(\AA^{\vee},2)$ and $\widetilde{\YY}
_m={\GG}r(\widetilde{\AA}^{\vee},2)$ from \eqref{Y=Gr}, respectively, we obtain the statements
(i) and (ii) of the Theorem.

(iii)-(iv) Let us show that any sheaf $E$ defined by extensions \eqref{extension Spin}
and \eqref{extension Spin I} for the case $k=n=1,\ m=-p$, $S\in|\OO_X(1)|$:
\begin{equation}\label{triple 7}
	0\to\SS(-1)\to E\to\OO_{S}(-p)\to0,\ \ \ \ \ p\ge1,
\end{equation}
\begin{equation}\label{triple 7 J}
	0\to\SS(-1)\to E\to\JJ_{\P^1,S}(-p)\to0,\ \ \ \ \ p\ge2.
\end{equation}
is stable, that is, according to the definitions of $\YY_{\SS,1,m,1}$ and $\widetilde{\YY}_{\SS,1,m,1}$
(see Remarks \ref{Remark 4.B} and \ref{Remark 4.B I} respectively) that
\begin{equation}\label{Y=Gr S}
	\YY_{\SS,1,m,1}=\P(\AA_{\SS}^{\vee}),\ \ \ \ \ m\le-1;\ \ \ \ \ \ \ \ \
	\widetilde{\YY}_{\SS,m}=\P(\widetilde{\AA}_{\SS}^{\vee}),\ \ \ \ \ m\le-2.
\end{equation}
Indeed, an extension \eqref{triple 7} corresponds to an element from $\Ext^1(\OO_{S}
(-p),\SS(-1))$. For semistable objects this extension gives tilt
Harder--Narasimhan filtration for $E$ provided that $E$ is tilt-unstable. Harder--Narasimhan
factors in this case are unique. This means that $E$ determines a two-dimensional quadric
$S$ and the sheaf $\SS(-1)$ as a subobject of $E$. Multiplying the morphism $\SS(-1)\to E$ by a scalar
does not change the isomorphism class of the sheaf $E$. In addition, if $E$ is a direct sum,
it is not stable. 
	
Conversely, let a one-dimensional subspace in $\Ext^1(\OO_{S}(-p),\SS(-1))$ be given. Choice
of any nonzero vector in this subspace gives a nontrivial extension
\eqref{triple 7}. The object $E$ in it is strictly semistable along the induced wall $W$.
The Jordan--H?lder factors of the object $E$ are uniquely defined, and the only subobjects that
can destabilize $E$ above the wall are either $\SS(-1)$ or $\OO_{S}(-p)$.
However, $\SS(-1)$ does not destabilize $E$ for numerical reasons.
On the other hand, the non-splitability of the previous exact triple means that $\OO_{S}(-p)$
cannot be a subobject of $E$.

In the case of the extension \eqref{triple 7 J} the argument is similar to the case of the extension
\eqref{triple 7}. Now applying the statements (i) and 
(ii.2) of Theorems \ref{Theorem 4.1S} and
\ref{Theorem 4.2S} to the varieties $\YY_{\SS,1,m,1}={\GG}r(\AA_{\SS}^{\vee},2)$ and
$\widetilde{\YY}_{\SS,1,m,1}={\GG}r(\widetilde{\AA}_{\SS}
^{\vee},2)$ from \eqref{Y=Gr S} respectively, we obtain the statements (iii) and (iv) of the Theorem.
\end{proof}

\subsection{Special cases of rank two semistable sheaves with small values of class $c_2$ and maximal class $c_3\ge0$ on $X_2$.}\label{subsec 5.2}
Theorem \ref{moduli with c3 max} covers most of the possible values of Chern classes of rank 2 semistable sheaves $E$ on the quadric $X_2$. The remaining special cases for $c_3\ge0$ and small values of class $c_2$ are indicated in statements (1.1), (1.2), (2.1),
(2.3) and (2.5) of Theorem \ref{Theorem 3.1}. Below in Theorem \ref{moduli with c3 max 2} we
describe the moduli of sheaves $E$ for the first three of the indicated cases, and in Theorems
\ref{moduli with c3 max 3} and \ref{moduli with c3 max 4} --- for the last two cases
respectively.
\begin{theorem}\label{moduli with c3 max 2}
Under the conditions and notation of Theorem \ref{moduli with c3 max} the following statements are true:\\
(1) For $c_1=-1$, $c_2=1$ and $c_{3\mathrm{max}}=0$ the variety $M_X(v)$ is a point $[\SS(-1)]
$.\\
(2) For $c_1=c_2=c_{3\mathrm{max}}=0$ the variety $M_X(v)$ is a point $[\OO_X^{\oplus2}]
$.\\
(3) For $c_1=-1$, $c_2=2$ and $c_{3\mathrm{max}}=2$ we have $M_X(v)\simeq \mathrm{Gr}(2,5)$. \\
\end{theorem}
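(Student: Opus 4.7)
The plan is as follows. For all three statements I begin with the observation that any $[E]\in M_X(v)$ is Gieseker-semistable by hypothesis, hence 2-semistable by the implications diagram in Section~2, and therefore tilt-semistable in $\Coh^\beta(X)$ for $\beta<\mu(E)$ and $\alpha\gg 0$ by Proposition~\ref{Prop 2.1}(i). Consequently every $[E]\in M_X(v)$ falls under the classification of Theorem~\ref{Theorem 3.1}, and I can read off the answer in each case by matching $\ch(E)=v$ with the appropriate entry.

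For part (1), the character corresponds to $(c,d,e)=(-1,0,\tfrac16)$, so Theorem~\ref{Theorem 3.1}(1.1) forces $E\cong\SS(-1)$. For part (2), $(c,d,e)=(0,0,0)$ and Theorem~\ref{Theorem 3.1}(2.1) forces $E\cong\OO_X^{\oplus 2}$. In both cases $M_X(v)$ is a single reduced point, so these parts are essentially direct corollaries of the tilt-stability classification.

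For part (3), $(c,d,e)=(-1,-\tfrac12,\tfrac53)$, and Theorem~\ref{Theorem 3.1}(1.2) provides a destabilizing triangle $0\to\OO_X(-1)^{\oplus 3}\to E\to\OO_X(-2)[1]\to 0$ in $\Coh^\beta(X)$. Rotating this triangle and passing to sheaf cohomology (using $\HH^{-1}(E)=0$) converts it into a sheaf-level short exact sequence $0\to\OO_X(-2)\xrightarrow{\,s\,}\OO_X(-1)^{\oplus 3}\to E\to 0$, where $s=(s_1,s_2,s_3)$ with $s_i\in V_5:=H^0(\OO_X(1))$. The resolution is canonically attached to $E$: a cohomological computation from it yields $h^0(E(1))=3$, and the evaluation map $H^0(E(1))\otimes\OO_X\to E(1)$ has kernel $\OO_X(-1)$, so the triple $(s_1,s_2,s_3)$ is recovered from $E$ uniquely up to the natural $\GL(3)$-action on $\OO_X(-1)^{\oplus 3}$. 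If $\dim\langle s_1,s_2,s_3\rangle\le 2$ then $E$ splits off a direct summand $\OO_X(-1)$ and is $\mu$-unstable, so stability forces the $s_i$ to be linearly independent; hence $E$ determines and is determined by the three-dimensional subspace $W=\langle s_1,s_2,s_3\rangle\subset V_5$. Conversely, every $W\in\Gr(3,V_5)$ yields a cokernel $E_W$ with $h^0(E_W)=0$, which is automatically $\mu$-stable (the slope $-\tfrac12$ cannot be realized by any rank-one subsheaf). This sets up a set-theoretic bijection $\Gr(3,V_5)\cong\Gr(2,5)\leftrightarrow M_X(v)$.

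The final step upgrades this bijection to an isomorphism of schemes. I construct a universal family on $\Gr(3,V_5)\times X$ from the tautological inclusion $\mathcal S\hookrightarrow V_5\otimes\OO_{\Gr}$, producing a modular morphism $\Phi\colon\Gr(3,V_5)\to M_X(v)$. A standard deformation-theoretic computation, applying $\Hom(-,E)$ to the resolution and invoking the vanishings $H^i(E(1))=0$ and $H^i(E(2))=0$ for $i\ge 1$, yields $\rmext^1(E,E)=6$ and $\rmext^2(E,E)=0$ for every stable $E$; hence $M_X(v)$ is smooth of pure dimension $6=\dim\Gr(2,5)$. Then $\Phi$ is a proper bijective morphism between irreducible smooth projective varieties of the same dimension, and is therefore an isomorphism by Zariski's main theorem. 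The principal technical obstacle will be the explicit construction of the universal family and the verification of functoriality of $\Phi$; everything else is a direct application of Theorem~\ref{Theorem 3.1}(1.2) together with routine cohomological bookkeeping on the canonical resolution.
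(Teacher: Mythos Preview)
Your proof is correct and follows essentially the same architecture as the paper's: parts (1) and (2) are immediate from Theorem~\ref{Theorem 3.1}(1.1),(2.1), and for part (3) both you and the paper convert the destabilizing triangle of Theorem~\ref{Theorem 3.1}(1.2) into the sheaf resolution $0\to\OO_X(-2)\to\OO_X(-1)^{\oplus 3}\to E\to 0$, set up the bijection with $\Gr(2,5)$, build the universal family from the tautological sequence, and conclude via smoothness ($\rmext^2(E,E)=0$) plus Zariski's main theorem. The only tactical difference is in verifying that every $E_W$ is stable: the paper argues by tilt-stability above the unique wall (analyzing possible Jordan--H\"older factors), whereas you use the more classical observation $h^0(E_W)=0\Rightarrow\mu$-stable for rank two with odd $c_1$; your route is shorter but tacitly uses that $E_W$ is torsion-free, which you should state (it follows from Auslander--Buchsbaum since $\mathrm{pd}(E_W)\le 1$ and the common zero locus of three independent hyperplane sections on $X_2$ has codimension $\ge 2$). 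Likewise, you recover the subspace $W$ from $E$ via the evaluation map $H^0(E(1))\otimes\OO_X\to E(1)$, while the paper invokes uniqueness of the tilt Harder--Narasimhan factors below the wall; both identifications are canonical and yield the same injectivity statement.
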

\begin{proof}
Statements (1) and (2) follow directly from statements (1.1) and (2.1) of Theorem
\ref{Theorem 3.1}. Let us consider the last case.

(3) In this case, according to statement (1.2) of Theorem \ref{Theorem 3.1} the sheaf $E$ is
determined by an exact triple
\begin{equation}\label{E=coker f}
	0\to\OO_X(-2)\xrightarrow{f}\OO_X(-1)\otimes U\to E\to0.
\end{equation}
The morphism $f$ can be considered as a 3-dimensional quotient $U$ of the space\\
$H^0(\OO_X(1))^{\vee }$, that is, as a point of the grassmannian $\mathrm{Gr}(2,5)$. Let us consider
the mapping $\varphi:\mathrm{Gr}(2,5)\to M: (H^0(\OO_X(1))^{\vee}\onto U)\mapsto E$, where
$E$ is defined in \eqref{E=coker f}.

Let us check that the function $\varphi$ is correctly 
defined. To do this, it is enough to make sure that
$E$ is $\mu$-stable. By Theorem \ref{Theorem 3.1} there is 
a unique wall $W$ for the above objects $E$, given by the 
exact triple
\begin{equation}\label{triple (1.2)}
0\to\OO_X(-1)^{\oplus 3}\to E\to\OO_X(-2)[1]\to0.
\end{equation}
Therefore, checking that an object $E=\varphi(U)$ is $\mu$-stable is equivalent to checking
$\nu_{\alpha, \beta}$-stability of $E$ in the neighborhood of this wall $W$ above it. If $E$
is not semistable over $W$, then there is a destabilizing semistable quotient $E\onto G$.
Since $E$ is strictly semistable along $W$, the quotient satisfies the equality $\nu_{
\alpha,\beta}(E)=\nu_{\alpha, \beta}(G)$ for $(\alpha,\beta)$ on $W$. As is known,
any Jordan--H\"older filtration for $E$ has three stable factors $\OO_X(-1)$ and
one stable factor $\OO_X(-2)[1]$. Thus, the stable factors of the object $G$ are
contained among these sheaves. The factor $\OO_X(-2)[1]$ does not destabilize $E$ over $W$ by
numerical reasons. The vector space $\Hom(E,\OO_X(-1))$ is the kernel of
a homomorphism
$\Hom(\OO_X(-1)\otimes U,\OO_X(-1))\to\Hom(\OO_X(-2), \OO_X(-1))$, which is injective.
Therefore, $G\not\cong\OO_X(-1)^{\oplus a}$ for $a\in\{1,2,3\}$. If $G$ is an
extension of $\OO_X(-1)$ by $\OO_X(-2)[1]$, then the corresponding subobject is
$\OO_X(-1)^{\oplus 2} $ and it does not destabilize $E$ above the wall. Likewise, if $G$ is an
extension of $\OO_X(-1)^{\oplus 2}$ by $\OO(-2)[1]$, then the corresponding subobject
is given by the sheaf $\OO_X(-1)$ that does not destabilize $E$ above the wall. Thus $E$ is
$\mu$-stable.

Let us check that $\varphi$ is a bijection. According to Theorem \ref{Theorem 3.1}.(1.2) any
semistable object $E$ with $c_1=-1$, $c_2=2$ and $c_{3\mathrm{max}}=0$ satisfies
the exact triple \eqref{triple (1.2)}. Specifying such an extension is equivalent to specifying
an element $\Ext^1(\OO_X(-2)[1], \\ \OO_X(-1)^{\oplus 3})=H^0(\OO_X(1))^{\oplus 3}$ . In
Theorem \ref{Theorem 3.1} we showed that this extension is the tilt
Harder--Narasimhan filtration for $E$ under the wall. The factors of this filtration are unique. This
means that $E$ determines the subobject $\OO_X(-1)^{\oplus 3}$. However, the group $GL(3)$
acts by automorphisms on $\OO_X(-1)^{\oplus 3}$ without changing the isomorphism class of $E$. This
means that we get a unique subspace in $H^0(\OO_X(1))$.
However, if this subspace is not three-dimensional, then there is a destabilizing morphism
$E\onto\OO_X(-1)$. This simultaneously proves surjectivity and injectivity of
$\varphi$.

Let us show that $\varphi$ is a morphism of schemes. To do this, it is enough to construct a universal
family $\EE$ of sheaves $E$ on $\mathrm{Gr}(2,5)\times \P^3$. Universal property of
the scheme $M_X(v)$ then shows that $\varphi$ is a morphism. We have two projections
$\mathrm{Gr}(2,5)\xleftarrow{p_1}\mathrm{Gr}(2,5)\times X\xrightarrow{p_2}X$ and
the rank 3 tautological quotient bundle of $\OO_{\mathrm{Gr}(2,5)}\otimes
H^0(\OO_X(1))^{\vee}\onto\QQ$ on $\mathrm{Gr}(2,5)$. The required universal family
$\EE$ is obtained as the cokernel of the composition of morphisms $p_2^*\OO_X(-2)\to H^0(\OO_X(1))^{\vee}
\otimes p_2^*\OO_X(-1)\to\QQ\boxtimes\OO_X(-1)$.

To prove the smoothness of $M_X(v)$, it is enough to show that $\Ext^2(E,E)=0$. This
is obtained by applying the three functors ${\mathbf R}\Hom(-,\OO_X(-2)[1])$, ${\mathbf R}
(-,\\ \OO_X(-1))$, and ${\mathbf R}\Hom(E,-)$ to the exact triple \eqref{triple (1.2)}. From the
smoothness of $M_X(v)$ and bijectivity of $\varphi$ and \cite[Ch. 2, \S4.4, Thm. 2.16]{Sh} it follows
that $\varphi$ is an isomorphism.
\end{proof}

\begin{theorem}\label{moduli with c3 max 3}
	For $c_1=0$, $c_2=2$ and $c_{3\mathrm{max}}=2$ the scheme $M_X(v)$ is irreducible and has
	dimension 9 and is not smooth.
\end{theorem}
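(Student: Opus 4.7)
The plan is to combine the classification of Theorem~\ref{Theorem 3.1}(2.3) with a parameter-space construction and a local analysis at a strictly semistable polystable point. By Theorem~\ref{Theorem 3.1}(2.3), every semistable $E$ with $\ch(E)=(2,0,-H^2,[\mathrm{pt}])$ fits in the distinguished triangle $\OO_X(-1)^{\oplus 6}\to E\to\SS(-2)^{\oplus 2}[1]$, hence $E\cong\coker\phi$ for a sheaf-injective morphism $\phi\colon\SS(-2)^{\oplus 2}\to\OO_X(-1)^{\oplus 6}$. Using $\SS^\vee\cong\SS(-1)$ and \eqref{spin tr} we get $\Hom(\SS(-2),\OO_X(-1))\cong H^0(\SS)\cong\bk^4$, so the affine parameter space $\AA:=\Hom(\SS(-2)^{\oplus 2},\OO_X(-1)^{\oplus 6})$ has dimension $48$, and $\GL(2)\times\GL(6)$ acts on it with diagonal scalar $\bk^*$ as kernel, giving effective dimension $39$. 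By the uniqueness of the destabilizing subobject (argued as at the start of the proof of Theorem~\ref{moduli with c3 max}), the modular map from the open semistable locus of $\AA$ to $M_X(v)$ is surjective with orbit fibres, so $M_X(v)$ is irreducible of dimension $48-39=9$.

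Next I would compute $\chi(E,E)$ by Grothendieck--Riemann--Roch on $X_2$. From $\ch(E)^\vee\ch(E)=(4,0,-4H^2,0)$ paired with $\mathrm{td}(T_X)=(1,\tfrac{3H}{2},\tfrac{13H^2}{12},1)$ one obtains $\chi(E,E)=4-6H^3=-8$, and $\mu$-semistability forces $\rmext^3(E,E)=\hom(E,E(-3))^\vee=0$. So $\rmext^1(E,E)-\rmext^2(E,E)=9=\dim M_X(v)$ for every semistable $E$. The key step is to exhibit a point of $M_X(v)$ at which $M_X(v)$ is singular, and the natural candidate is the polystable sheaf $E_0:=\II_{L_1}\oplus\II_{L_2}$ for two distinct skew lines $L_1,L_2\subset X_2$. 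Using $\ch(\II_L)=(1,0,-\tfrac12H^2,\tfrac12[\mathrm{pt}])$ one verifies $\ch(E_0)=\ch(E)$, and each $\II_{L_i}$ is stable of the same reduced Hilbert polynomial as $E$, so $[E_0]\in M_X(v)$ as a strictly semistable polystable representative.

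Finally I would compute $\rmext^1(E_0,E_0)=\sum_{i,j}\rmext^1(\II_{L_i},\II_{L_j})$. Since the Fano scheme of lines on $X_2$ is $\P^3$, $\rmext^1(\II_L,\II_L)=3$. For skew $L_1\ne L_2$ the normal bundle $N_{L/X_2}\cong\OO_{\P^1}\oplus\OO_{\P^1}(1)$ and Serre--Grothendieck duality give $\rmext^1(\II_L,\OO_X)=h^0(\det N_{L/X_2})=h^0(\OO_{\P^1}(1))=2$, while $\rmext^1(\II_{L_1},\OO_{L_2})=H^1(L_2,\OO_{L_2})=0$; the long exact sequence from applying $\Hom(\II_{L_1},-)$ to $0\to\II_{L_2}\to\OO_X\to\OO_{L_2}\to 0$ then yields $\rmext^1(\II_{L_1},\II_{L_2})=2$ (consistent with $\chi(\II_{L_1},\II_{L_2})=-2$ and $\rmext^2(\II_{L_1},\II_{L_2})=0$). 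Summing, $\rmext^1(E_0,E_0)=10$ and $\rmext^2(E_0,E_0)=0$. By the Luna slice theorem, the local model of $M_X(v)$ at the polystable point $[E_0]$ is the affine GIT quotient of $\Ext^1(E_0,E_0)\cong\bk^{10}$ by the effective $\bk^*=\Aut(E_0)/\bk^*$, which acts trivially on the diagonal $\bk^3\oplus\bk^3$ and with opposite weights $\pm 1$ on $\Ext^1(\II_{L_1},\II_{L_2})\oplus\Ext^1(\II_{L_2},\II_{L_1})\cong\bk^2\oplus\bk^2$. The invariant subalgebra is minimally generated by $3+3+4=10$ elements with a single quadratic relation, so the Zariski tangent space of $M_X(v)$ at $[E_0]$ has dimension $10$, strictly exceeding $\dim M_X(v)=9$; hence $M_X(v)$ is singular at $[E_0]$. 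The main obstacle will be the careful Ext bookkeeping at $[E_0]$ and the Luna-slice identification of the embedded tangent space.
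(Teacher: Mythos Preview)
Your approach coincides with the paper's: it uses the same resolution from Theorem~\ref{Theorem 3.1}(2.3), the same $48-39=9$ parameter count for irreducibility and dimension, and the same polystable point---its rank-one factors $E_i$ with resolution $0\to\SS(-2)\to\OO_X(-1)^{\oplus3}\to E_i\to0$ are exactly your ideal sheaves $\II_{L_i}$ of lines---obtaining tangent dimension $3+2+2+3=10$ via a lemma of Mir\'o-Roig--Trautmann that is equivalent to your Luna-slice computation. One step you pass over: the orbit count $48-39$ presupposes that the generic cokernel is \emph{stable} (so the stabilizer is exactly $\bk^*$); the paper proves this by an explicit incidence-variety dimension estimate showing that matrices $A\in\Hom(\bk^6,\bk^2\otimes H^0(\SS))$ admitting a block-triangular form have codimension $\ge1$, though in fact your Luna-slice model already yields $\dim M_X(v)=9$ locally and hence globally by irreducibility.
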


\begin{proof}
	Let $E$ be a Gieseker-semistable sheaf with $c_1=0$, $c_2=2$ and $c_{3\mathrm{max}}=
	2$, that is, $[E]\in M_X(v)$, where $v=(2,0,-H^2,\{\mathrm{pt}\})$. By Theorem
	\ref{Theorem 3.1}.(2.3) $E$ is destabilized by an exact triple
	\begin{equation}\label{non-smooth}
		0\to\SS(-2)^{\oplus2}\to\OO_X(-1)^{\oplus 6}\to E\to 0.
	\end{equation}

Let us prove that there exist stable sheaves $E$ included in \eqref{non-smooth}.
Let $E$ be properly semistable, then there exists a subsheaf $F\subset E$ of rank 1 such that
that $\chi(F(m))=\frac 12\chi(E(m))$ for $m\gg 0$. From get thathere we obtain $F\in M(1,0,-\frac12 H^2,\frac 12)$. Since $F$ is stable, by the proof of Lemma \ref{lemma 3.5} we find
an exact triple $0\to\SS(-2)\to\OO_X(-1)^{\oplus3}\to E\to 0.$ Since $\mathrm{Ext}^1 (\OO_X(-1)^ {\oplus 6},\SS(-2)^{\oplus 2})$, the embedding $F\to E$ induces a morphism $\OO_X (-1)^{\oplus 3}\to \OO_X(-1 )^{\oplus 6}$. Let us denote $W=H^0(\SS(1))\cong\bk^{16}$ and $A=H^0(\alpha^\vee(-1)):\bk^6\to\bk ^2\otimes W$, then we get a commutative diagram with exact rows
$$
\xymatrix{ 0\ar[r] &\bk^3 \ar[r]\ar[d] & \bk^6 \ar[d]^A\ar[r] & \bk^3 \ar[d ]\ar[r]&0\\
	0\ar[r] & \bk\otimes W\ar[r] & \bk^2\otimes W\ar[r] & \bk\otimes W\ar[r] & 0}.
$$
In other words, the matrix of the mapping $A$ in some bases of the spaces $\bk^6$ and
$\bk^2$ (consistent with the choice of subspaces $\bk^3\subset\bk^6,\bk\subset\bk^2$)
has the form
$$\begin{pmatrix}
	* & * & * & * & * & * \\
	0 & 0 & 0 & * & * & *
\end{pmatrix}$$
Consider the incidence variety
$$
X=\{(A_i)_{i=1}^{16}\in\mathrm{Hom}(\bk^6,\bk^2)^{\oplus16},V_1\in\mathrm{Gr}(3, 6),
V_2\in\mathrm{Gr}(1,2)|A_i(V_1)\subset V_2\}
$$
with the projection of $X\to Y=\mathrm{Hom} (\bk^6,\bk^2)^{\oplus 16}$. Because
$\dim\mathrm{Gr}(3,6)=9,\linebreak\dim\mathrm{Gr}(1,2)=1$ and on each of the 16 matrices $A_i$
three independent linear conditions are imposed, $\dim 
X=16\cdot12+9+1-16\cdot3<16\cdot12=\dim Y$. It follows 
that there is a dense open subset of the variety
$Y$ that does not intersect the image of $X$ under the projection $X\to Y$. This proves the existence
of Gieseker-stable sheaves $E=\coker\alpha$.

Since $E$ is tilt-semistable over the semicircular wall, then $\Hom(E,\OO_X(-1))=0$. In addition, $\Ext^1(\SS(-2),\SS(-2))=0$ (since $\SS(-1)$ is an
exceptional object of the category $D^b(X)$) and $\Ext^2(\OO_X(-1),\SS(-2))=0$ (since $\SS(-1)$ is an ACM sheaf). Now from (\ref{non-smooth}) the equality $\Ext^2(E, \SS(-2)^{\oplus2})=0$ follows, so we have an exact sequence
\begin{equation}\label{ext 1}
	\begin{split}
		& 0\to\Hom(E,E)\to\Ext^1(E,\SS(-2)^{\oplus 2})\to \\
		& \Ext^1(E,\OO_X(-1)^{\oplus 6})\to\Ext^1(E,E)\to0.
	\end{split}
\end{equation}
If now the sheaf $E$ is stable, then $\hom(E,E)=1$, and a calculation by \eqref{ext 1} and
\eqref{non-smooth} gives $\rmext^1(E,E)=9$. On the other hand, from the triple \eqref{non-smooth}
it follows that $M_X(v)$ is irreducible and $\dim M_X(v)=12h^0(\SS)-\dim(GL(2,\bk)\times GL(6,\bk)
/\bk^*)=12\cdot4-4-36+1=9$, so $M_X(v)$ is smooth at the point $[E]$.

In the properly semistable case we use the following result
\cite[p. 294, Lemma]{MRT}:
\begin{lemma}\label{tangent}
	Let $E$ be a semistable sheaf of rank 2, $E\cong E_1\oplus E_2$, where $\chi(E_1(m))=
	\chi(E_2(m))=\frac 12\chi(E(m))$ and $E_1\not\cong E_2$. If $\Ext^2(E,E)=0$, then
	the tangent space to the moduli scheme of semistable sheaves at the point $[E]$ is isomorphic to
	$\Ext^1(E_1,E_1)\oplus(\Ext^1(E_1,E_2)\oplus\Ext^1(E_2,E_1))\oplus\Ext^1(E_2,E_2).$
\end{lemma}
In our case, we apply this lemma to the sheaf $E\cong E_1\oplus E_2$, where $E_1\not\cong E_2$,
the sheaves $E_1$ and $E_2$ are stable and included in exact triples of the form
\begin{equation}\label{non-smooth1}
	0\to\SS(-2)\to\OO_X(-1)^{\oplus 3}\to E_i\to 0.
\end{equation}
Since $\Ext^1(\SS(-2),\OO_X(-1)))\cong H^1(\SS)=0$ and $\Ext^2(\OO_X(-1),\ OO_X(-1))=0$,
then, taking into account \eqref{non-smooth1} we have $\Ext^2(E,\OO_X(-1)^{\oplus 6})=0$. On the other side, since $\Ext^2(\SS(-2),\SS(-2))=\Ext^3(\OO_X(-1),\SS(-2))=0$ due to the
exceptionality of the pair $(\SS(-1),\OO_X)$ in $D^b(X)$, then $\Ext^3(E,\SS(-2))=0$. From here
and from \eqref{non-smooth} we get $\Ext^2(E,E)=0$. Now by Lemma \ref{tangent}
the dimension of the tangent space to $M_X(v)$ at the point $[E]=[E_1\oplus E_2]$ is equal to
$3+2+2+3=10\neq 9$, so $M_X(v)$ is not smooth.
\end{proof}

\begin{theorem}\label{moduli with c3 max 4}
	For $c_1=0$, $c_2=4$ and $c_{3\mathrm{max}}=8$, the scheme $M_X(v)$ is irreducible and equal to a union of two irreducible subsets $M_1$ and $M_2$. These subsets are described as follows.\\
	(i) $M_1$ is a smooth rational variety of dimension 20, which is
	the projectivization of a locally free sheaf of rank 17 on $\mathbb{G}$. $M_1$ is a fine
	moduli space, and all sheaves from $M_1$ are stable. The scheme $M_X(v)$ is
	nonsingular along $M_1$.\\
	(ii) the scheme $M_2$ is irreducible, has dimension 21, and polystable sheaves from $M_2$
	constitute a closed subset of dimension 12 in $M_2$ in which the scheme $M_X(v)$ is not
	smooth.
\end{theorem}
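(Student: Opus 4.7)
The plan is to build $M_1$ and $M_2$ as explicit projectivizations over known smooth bases, matching the first two destabilizing exact triples from Theorem~\ref{Theorem 3.1}(2.5), and then to glue them by a deformation argument showing $M_1\subset\overline{M_2}=M_X(v)$. First I would apply Theorem~\ref{Theorem 3.1}(2.5) together with Lemma~\ref{2,0,-4}: every $[E]\in M_X(v)$ with $v=(2,0,-2H^2,4[\mathrm{pt}])$ is destabilized by one of the three triples listed there, and the triple $0\to\SS(-1)\to E\to\II_{\P^1,Q_2}(-1)\to 0$ produces tilt-stable, hence Gieseker-stable $E$, disjoint from the third triple involving $\OO_X(-2)^{\oplus 2}[1]$. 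The sheaves admitting such a first-type extension will constitute $M_1$; the remaining ones will form $M_2$.

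To build $M_1$, I would apply the construction of Remark~\ref{Remark 4.B I} and Theorem~\ref{Theorem 4.2S} at the boundary value $m=-1$, which is excluded from that theorem's hypothesis $m<-1$. One must check separately that $\widetilde{\AA}_{\SS}$ is still locally free on $\mathbb{G}$ by computing $\rmext^0(\II_{\P^1,Q_2}(-1),\SS(-1))=0$ and $\rmext^1(\II_{\P^1,Q_2}(-1),\SS(-1))=17$ from the spinor triple~\eqref{spin tr on X} twisted by $\OO_X(-2)$ and Serre duality on $X_2$. Then, adapting the proof of Theorem~\ref{moduli with c3 max}(iv) to $m=-1$, I would show that \emph{every} nonzero class in $\P(\widetilde{\AA}_{\SS}^{\vee})$ yields a Gieseker-stable $E$: the tilt Jordan--H\"older factors along the destabilizing wall are exactly $\SS(-1)$ and $\II_{\P^1,Q_2}(-1)$, and neither can destabilize a nonsplit extension ($\SS(-1)$ for numerical reasons as in Theorem~\ref{moduli with c3 max}(iv), $\II_{\P^1,Q_2}(-1)$ by nonsplitness plus $\Hom(E,\SS(-1))=0$). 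This gives a bijective modular morphism $\Theta\colon\P(\widetilde{\AA}_{\SS}^{\vee})\to M_1\subset M_X(v)$ identifying $M_1$ with a smooth rational variety of dimension $4+16=20$. Smoothness of $M_X(v)$ along $M_1$ and the fine-moduli claim then follow from the spinor analogue of diagram~\eqref{diag 113}, yielding $\rmext^1(E,E)=20$ and $\rmext^2(E,E)=0$ via \eqref{ext(I,I)} and the exceptional-collection vanishings on $\mathrm{D}^b(X_2)$.

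To build $M_2$, consider the family of type-(b) extensions $0\to\II_{\P^1,Q_2}(-1)\to E\to\SS(-1)\to 0$ parameterized (modulo the scalar automorphisms on each factor) by $\mathbb{G}\times\P(\Ext^1(\SS(-1),\II_{\P^1,Q_2}(-1)))$. A Serre-duality computation $\Ext^1(\SS(-1),\II_{\P^1,Q_2}(-1))\cong\Ext^2(\II_{\P^1,Q_2}(-1),\SS(-4))^{\vee}$ combined with the local/global Ext spectral sequence gives $\rmext^1(\SS(-1),\II_{\P^1,Q_2}(-1))=18$, so the total parameter count is $4+17=21$; define $M_2\subset M_X(v)$ as the closure of this locus. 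The type-(c) sheaves are then shown to deform flatly into type-(b) sheaves via a Quot-scheme interpolation, so they also lie in $M_2$. For the polystable locus, observe that any polystable $[E]\in M_X(v)$ of the given Chern character must split as $\II_{C_1}\oplus\II_{C_2}$ with each $\II_{C_i}$ of Chern character $(1,0,-H^2,2[\mathrm{pt}])$, i.e., $C_i$ a conic on $X_2$; since conics on $X_2$ form an irreducible $6$-dimensional family, the symmetric square has dimension $12$. Finally, applying the lemma from \cite{MRT} cited in the proof of Theorem~\ref{moduli with c3 max 3} to $E=\II_{C_1}\oplus\II_{C_2}$ with $C_1\not\cong C_2$, one computes $\dim T_{[E]}M_X(v)>21$, so $M_X(v)$ is singular along the polystable locus.

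To conclude that $M_X(v)=M_1\cup M_2$ is irreducible, I would exhibit $M_1\subset\overline{M_2}$ by degenerating a general type-(b) extension, via Quot-scheme interpolation between the two possible tilt-filtrations, into a sheaf whose destabilizing data matches type (a). The main obstacle I expect is the stability check at the boundary value $m=-1$: at this value the numerical margins in Theorem~\ref{Theorem 4.2S} vanish, so ruling out new destabilizing subobjects (those excluded only by a strict inequality for $m\le -2$, becoming marginal at $m=-1$) requires a careful case analysis patterned on but distinct from that of Theorem~\ref{moduli with c3 max}(iv); this is precisely where the moduli geometry acquires its exceptional flavor and the extra component $M_2$ appears.
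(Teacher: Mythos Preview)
Your construction of $M_2$ has a genuine error. You base $M_2$ on the middle triple $0\to\II_{\P^1,Q_2}(-1)\to E\to\SS(-1)\to 0$ and claim $\rmext^1(\SS(-1),\II_{\P^1,Q_2}(-1))=18$. In fact this Ext group is one-dimensional: using the resolution $0\to\SS(-3)\to\OO_X(-2)^{\oplus2}\to\II_{\P^1,S}(-1)\to 0$ obtained from \eqref{spin tr on X} and the ACM property of $\SS$, one finds $\Ext^1(\SS(-1),\II(-1))\cong\Ext^2(\SS,\SS(-2))\cong\Ext^1(\SS,\SS(-1))^\vee\cong\Hom(\SS,\SS)^\vee\cong\bk$. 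More seriously, any sheaf $E$ sitting in that middle triple has the two-dimensional torsion subsheaf $\II_{\P^1,Q_2}(-1)$ and hence is \emph{not} Gieseker-semistable at all; such $E$ never lie in $M_X(v)$. The paper instead defines $M_2$ via the third triple of Lemma~\ref{2,0,-4}, written as a sheaf sequence $0\to\OO_X(-2)^{\oplus2}\to\OO_X(-1)^{\oplus4}\to E\to 0$, and obtains $\dim M_2=2\cdot4\cdot h^0(\OO_X(1))-\dim(GL_2\times GL_4/\bk^*)=40-19=21$. The polystable sheaves then arise from block-diagonal monads $0\to\OO_X(-2)\to\OO_X(-1)^{\oplus2}\to E_i\to 0$, i.e.\ $E_i\cong\II_{C_i}$ for conics $C_i$; on this your description and dimension count are correct.

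A second, related issue: you assert $\rmext^1(E,E)=20$ for $[E]\in M_1$ by invoking the spinor analogue of diagram~\eqref{diag 113}. But a Riemann--Roch computation gives $\chi(E,E)=-20$, hence $\rmext^1(E,E)-\rmext^2(E,E)=21$, so $\rmext^1(E,E)\ge 21$. The diagram~\eqref{diag 113} fails at the boundary value $m=-1$ precisely because $\Ext^1(\SS(-1),\II(-1))\cong\bk\ne 0$, which feeds an extra copy of $\bk$ into $\Ext^1(E,\II(-1))$; this is the first-order deformation of $E$ out of $M_1$. The paper bypasses this computation: it observes that $M_1\cong\P(\widetilde{\AA}_{\SS}^\vee)$ is a \emph{projective} $20$-dimensional variety of stable sheaves, while every irreducible component of $M_X(v)$ containing a stable point has dimension at least the virtual dimension $21$; therefore $M_1$ is not a component, and since $M_X(v)=M_1\cup M_2$ one gets $M_1\subset\overline{M_2}$ without any Quot-scheme interpolation.
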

\begin{proof}
Let $E$ be a Gieseker-semistable sheaf with $c_1=0$, $c_2=2$ and $c_{3\mathrm{max}}
=2$, that is, $[E]\in M_X(v)$, where $v=(2,0,-2H^2,4\{\mathrm{pt}\})$. By Theorem
\ref{Theorem 3.1}.(2.5) the sheaf $E$ is destabilized by an exact triple
\begin{equation}\label{tr 2.1}
	0\to\SS(-1)\to E\to\II_{\P^1,S}(-1)\to0,
\end{equation}
or an exact triple
\begin{equation}\label{tr 2.2}
	0\to\OO_X(-2)^{\oplus2}\to\OO_X(-1)^{\oplus 4}\to E\to0.
\end{equation}
(i) According to Lemma \ref{2,0,-4} any sheaf $E$ in the triple \eqref{tr 2.1} is tilt-stable
for $\beta<0,\alpha\gg0$, and therefore is stable according to Proposition \ref{Prop 2.1}.
In other words, in the notation of Remark \ref{Remark 4.B I} we have an equality $\widetilde{\YY}
_{\SS,1,m,1}=\P(\widetilde{\AA}_{\SS}^{\vee})$, similar to the second equality
\eqref{Y=Gr S} (in our case $m=-1$). Therefore, arguing as in the proof of Theorem \ref{Theorem 4.2S}, we obtain then the bundles $E$ included in the triple \eqref{tr 2.1} are stable and form a smooth irreducible projective subvariety $M_1$ of the scheme $M_X(v)$, described as the projectivization of $\widetilde{\pi}:\ \P(\widetilde{\AA}_{\SS}^{\vee})\to\mathbb G$, where $\AA$ is a locally free sheaf of rank 17 on $\mathbb G$. Hence $\mathrm{dim}\ M_1=\mathrm{dim}\ \mathbb G+\mathrm{rk}\ \widetilde{\AA}_{\SS}-1=4+17-1=20$. On the other hand the virtual dimension of $M_X(v)$
is $21$ (direct calculations by the Riemann–Roch–Grothendieck theorem for the given Chern character $v$), so that any irreducible component of $M_X(v)$ with stable general sheaf has dimension $\ge 21$. Consequently, the 20-dimensional projective variety $M_1$ is not an irreducible component of the scheme $M_X(v)$.

(ii) Let us denote $M_2=\{[E]\in M_X(v)\ |\ E\ \text{is included in the triple}\ \eqref{tr 2.2}\}$.
From \eqref{tr 2.2} we obtain that $M_2$ is irreducible as a quotient of an open subset
of the space $\Hom(\OO_X(-2)^{\oplus2},\OO_X(-1)^{\oplus4})$ by the group $\dim(GL(2,\bk)
\times GL(4,\bk)/\bk^*$, and $\dim M_2=\hom(\OO_X(-2)^{\oplus2},\OO_X(-1)^{\oplus4
})-\dim(GL(2,\bk)\times GL(4,\bk)/\bk^*)=2\cdot4\cdot5-4-16+1=21$. Next we will carry out for the
sheaves $E$ from $M_2$ an argument similar to the argument with the triple \eqref{non-smooth},
carried out in the proof of Theorem \ref{moduli with c3 max 3}. In particular,
it is verified that $M_2$ contains stable sheaves $E$. For stable sheaves $E$
we have $\Hom(E,\mathcal O_X(-1)^{\oplus 4})=0$, and also $\Ext^2(E,\OO_X(-2)^{\oplus2})=0$
due to the exceptionality of line bundles on $X$ and the ACM property. Thus, in
this case there is a similar to (\ref{ext 1}) exact sequence
$$
0\to\Hom(E,E)\to\Ext^1(E,\OO_X(-2)^{\oplus2})\to\Ext^1(E,\OO_X(-1)^{\oplus4 })\to\Ext^1
(E,E)\to0.
$$
A standard calculation with this sequence using 
\eqref{tr 2.2} shows
that the dimension $\rmext^1 (E,E)$ of the tangent space to $M_X(v)$ at the point $[E]$ for
stable $E$ is equal to 21, which coincides with $\dim M_2$. Thus, $M_2$ is an irreducible open 21-dimensional subset of the irreducible component $\overline{M_2}$ of the scheme $M_X(v)$, where $\overline{M_2}$ is the closure of $M_2$ in $M_X(v)$. Since by (\ref{tr 2.1}) and (\ref{tr 2.2}) we have $M_X(v)=M_1\cup M_2=M_1\cup\overline{M_2}$, it follows that if $M_1\not\subset\overline{M_2}$, then the variety $M_1$
is an irreducible component of $M_X(v)$, in contradiction to the above. Hence $M_1\subset\overline{M_2}$, and therefore the scheme $M_X(v)=\overline{M_2}$ is irreducible.

Let us now consider a point $[E]=[E_1\oplus E_2]$, where $E_1\not\cong E_2$ and $E_i$ are included
into exact triples of the form
\begin{equation}\label{Ei}
	0\to\OO_X(-2)\to\OO_X(-1)^{\oplus 2}\to E_i\to0,\ \ \ \ \ i=1,2,
\end{equation}
A simple calculation using these triples shows that $\Ext^2(E_i,E_j)=0,\ 1\le i,j\le2,$
$\rmext^1(E_i,E_i)=6,\ i=1,2$, $\rmext^1(E_i,E_j)=5,\ i\ne j$. Hence we have $\Ext^2(E,E)
=0$, and by Lemma \ref{tangent} $\dim T_{[E]}M_X(v)=6+5+5+6=22\neq 21$, therefore
$M_X(v)$ is not smooth at the point $[E]$.
\end{proof}

\vspace{5mm}

\section{Boundedness of the third Chern class of rank two stable reflexive sheaves of general type with $c_1=0$ on the varieties $X_4$ and $X_5$}

\vspace{5mm}

In this section we consider rank two stable reflexive sheaves
of general type with $c_1=0$ (see Definition \ref{def 4.10}) on varieties
$X_4$ and $X_5$. For any such sheaf $E$ we prove boundedness
from above of the third Chern class $c_3$ of the sheaf $E$ by a quadratic polynomial in
the second Chern class $c_2$ -- see Theorems \ref{Thm 6.1} and \ref{Thm 6.4}
below.

\subsection{Rank two stable reflexive sheaves of general type with $c_1=0$ on the variety $X_5$}
In this subsection everywhere $X=X_5$.

Let us recall some well-known facts about the variety $X$ (see, for example,
\cite[Th. 4.2(iii) and Cor. 6.6(ii)]{Isk}).\\
1) The base $B=B(X)$ of the family of lines on $X$ is isomorphic to $\p2$.\\
2) For an arbitrary line $l\in B$, the set $B_l=\{l'\in B\ |\
l'\cap l\ne\varnothing\}$ is a line in $\P^2$.\\
3) For an arbitrary line $l\in B$, either $N_{l/X}\cong\OO_{\P^1}^{\oplus2}$ (general case), or $N_{l/X}\cong\OO_{\P^1}(1)\oplus\OO_{\P^1}(-1)$.\\
4) Let $\varphi:\ X\dasharrow \p4$ be the linear projection of the variety
$X\subset\p6$ into the space $\p4$ from a line $l\subset X$, for which
$N_{l/X}\cong\OO_{\p1}^{\oplus2}$. Then $\varphi(X)=Q\cong X_2$ is a smooth quadric in
$\P^4$, the morphism $\varphi: X\dasharrow Q$ is birational and decomposes into a Hironaka's roof
\begin{equation}\label{eqn 21}
	\xymatrix{
		& \ar[dl]_{\delta} \ar[dr]^{\sigma}\widetilde{X} & \\
		X \ar@{-->}[rr]^-{\varphi} & & Q.
	}
\end{equation}
Here $\delta^{-1}:\ X\dasharrow\widetilde{X}$ is the blow-up of $X$ centered at $l$, $S
=\delta^{-1}(l)\simeq\p1\times\p1$, $\sigma|_S:S\xrightarrow{\simeq}\sigma(S) =Q_2$ is
an isomorphism of $S$ onto a smooth two-dimensional quadric $Q_2\subset Q$, and $\sigma^{-1}:\
Q\dasharrow\widetilde{X}$ is the blow-up of $Q$ centered in a smooth rational cubic
curve $C$ of type (2,1) on $Q_2$, so $\widetilde{X}\simeq\p(\II_{C,Q})$.
In particular, a triple $0\to N_{C,Q_2}\to N_{C/Q}\to N_{Q_2/Q}|_{C}\to0$ is exact, where due to
isomorphisms $Q_2\simeq\p1 \times\p1$ and $C\simeq\p1$ we obtain $N_{C/Q_2}\cong\OO_{\p1}
(3)$, $N_{Q_2/Q}|_{C}\cong\OO_Q(H)|_{C}\cong\OO_{\p1}(3)$, hence $\det N_{C/ Q}\cong
\OO_{\p1}(6)$. \\
5) Let $\Gamma=\{(x,l)\in X\times B\ |\ x\in l\}$ be an incidence graph with projections
$X\xleftarrow{p_1}\Gamma\xrightarrow{p_2}B$. As we know \cite[Prev. 5.2]{Isk},
$p_1:\Gamma\to X$ is a morphism, which is finite at a general point, and we have $\dim\{x\in X\ |\ \dim p_1^{-1}
(x)\ge1\}\le0$.

Consider an extension
\begin{equation}\label{xi on C 0}
	\xi:\ 0\to\OO_Q(nH)\to F\xrightarrow{\varepsilon}\II_{C,Q}\to0.
\end{equation}
The sheaf $\lHom(\II_{C,Q},\OO_Q(nH))\cong\OO_Q(nH)$ on $Q$ and the sheaf $\MM=\lExt^1(\II_{C,Q}, \\
\OO_Q(nH))\cong\lExt^2(\OO_C,\OO_Q(nH))\cong\det N_{C/Q}\otimes\OO_Q(nH)\cong\OO_{\p1}(6+3n)$ on $C$ for $n\gg0$ are very ample. Therefore, we can assume that:\\
1) $H^i(\lHom(\II_{C,Q},\OO_Q(nH)))=0,\ i=1,2$, which implies that the long exact
sequence of $\Ext$-groups for a pair of sheaves $\II_{C,Q},\OO_Q(nH)$ gives
isomorphisms $\Ext^1(\lHom(\II_{C,Q},\OO_Q(nH))\cong H^0(\lExt^1(\II_{C,Q},\OO_Q(nH))
\cong H^0(\lExt^2(\OO_C,\OO_Q(nH))=H^0(\MM)$;\\
2) The triple \eqref{xi on C 0} as an extension $\xi\in\Ext^1(\lHom(\II_{C,Q},\OO_Q(nH)))=H^0(\MM)$ 
is a section of the sheaf $\MM$, the scheme of zeros $(\xi)_0$ of which is reduced,
i.e. it is a simple divisor $D_{\xi}=x_1+...+x_{3n+6}$. This means by
Serre's construction (see \cite[Section 4]{H}), that the sheaf $F$ in \eqref{xi on C 0}
is a reflexive sheaf of rank two with $\Sing F=D_{\xi}$ and the simplest
singularities at the points $x_i$, i.e. $\lExt^1(F,\OO_Q)\cong\oplus_{i=1}^{3n+6}\mathbf{k}_{x_i}$. 
Thus, according to \cite[Cor. 2.8]{BW}, the projective spectrum $Y=\P(F)$
of the sheaf $F$ is a smooth variety. The epimorphism $\varepsilon$ in
\eqref{xi on C 0} corresponds to an embedding $i$ of the divisor $\widetilde{X}=\P(\II_{C,Q})$ into
$Y$, and let $p:Y\to Q$ be the natural projection, so that $\sigma=p\circ i$. Note
also that the sheaf
\begin{equation}\label{sheaf L}
	L=\OO_{Y/Q}(1)\otimes p^*\OO(mH)
\end{equation}
is ample for $m\gg0$, where $\OO_{Y/Q}(1)$ is the Grothendieck sheaf on $Y=\P(F)$.

Let
\begin{equation}\label{E stable}
\begin{split}
& E\ \textit{be a rank 2 stable reflexive sheaf of general type with}\ c_1(E)=0\\
& \textit{on}\ X;\ \textit{in particular,}\ H^0(E)=0.
\end{split}
\end{equation}

Since $\dim\Sing E\le0$, it follows from properties 1) -- 5) that we can choose
for $l$ a general line on $X$ for which $N_{l/X}\cong\OO_{\p1}^{\oplus2}$ 
such that the following conditions are satisfied.\\
a) $B_l\cap p_2(p_1^{-1}(\Sing E))=\varnothing$, so the surface $S_l=p_1(p_2^{-1}(B_l))$
does not intersect the set $\Sing E$. Since by construction $S_l=\delta(S)$, then
this means that the sheaf
\begin{equation}\label{def tilde E}
	\widetilde{E}=\delta^*E
\end{equation}
satisfies the condition
\begin{equation}\label{tE cap S}
	\Sing\widetilde{E}\cap S=\varnothing.
\end{equation}
Note that from the projection formula for the blow-up $\delta$ it follows that
$E=\delta_*\widetilde{E}$, $R^i\delta_*\widetilde{E}=0,\ i>0,$
Therefore
\begin{equation}\label{chi=chi}
	\chi(E)=\chi(\widetilde{E})
\end{equation}
and, in addition, \eqref{E stable} implies
\begin{equation}\label{tilde E stable}
	H^0(\widetilde{E})=0.
\end{equation}

b) Since $E$ is a sheaf of general type, the set $G_C=\{x\in C\ |\ \widetilde{E}|
_{\sigma^{-1}(x)}\cong\OO_{\p1}(1)\oplus\OO_{\p1}(-1)\}$ is a proper
subset of the curve $C$, and $\widetilde{E}|_{\sigma^{-1}(x)}\cong\OO_{\p1}^{\oplus2}$
for $x\in C\smallsetminus G_C$.

In addition, we will assume that the section $\xi$ of the sheaf $\MM$ is sufficiently general, so that
\begin{equation}\label{Dxi cap empty}
	D_{\xi}\cap G_C=\varnothing.
\end{equation}
Below we will need another sufficiently general section
$\xi'\in H^0(\LL)$ such that
\begin{equation}\label{Dxi' cap empty}
	D_{\xi'}\cap G_C=\varnothing,\ \ \ \ \ \ D_{\xi}\cap D_{\xi'}
	=\varnothing.
\end{equation}

Put
\begin{equation}\label{defn of Q0 etc}
\begin{split}
& Q^0:=Q\smallsetminus(p(\Sing\widetilde{E})\cup D_{\xi}),\ \ \ \ \ \
C^0:=C\smallsetminus D_{\xi}\hookrightarrow Q^0,\\
& X^0:=\sigma^{-1}(Q^0)=\widetilde{X}\smallsetminus(\Sing
\widetilde{E}\cup\sigma^{-1}D_{\xi}),\\
& Y^0:=Y\smallsetminus p^{-1}(Q^0),\ \ \ \ \ \
\widetilde{E}^0:=\widetilde{E}|_{X^0}, \ \ \ \ \ \
\overline{E}^0:=\sigma_*\widetilde{E}^0,
\end{split}
\end{equation}
where $X^0$ becomes a divisor in $Y^0$ via the embedding $i$ defined above, so
that the following diagram is commutative:
\begin{equation}\label{eqn 26}
	\xymatrix{
		X^0\ \ar[dr]_{\sigma}\ar@{^{(}->}[rr]^-{i} & & Y^0\ar[dl]^{p} \\
		&Q^0. &
	}
\end{equation}
Note that, by \eqref{defn of Q0 etc}, $\Sing F=D_{\xi}$ 
and $Q_0$ are disjoint, so that the $\OO_{Q^0}$-sheaf $F|_{Q^0}$ is locally free and, therefore, $p:Y^0=
\P(F|_{Q^0})\to Q^0$ is a locally trivial $\p1$-bundle.
Therefore, by Serre’s theorem, for sufficiently large natural numbers $m,s,r$
there is an epimorphism $e:\ \LL_0:=(L^{\otimes-s}|_{Y^0})^{\oplus r}\twoheadrightarrow
\widetilde{E}^0$. Since $\overline{E}^0$ is a locally free $\OO_{X^0}$-sheaf
on a smooth divisor $X^0$ in a smooth variety $Y^0$, then, considered as
$\OO_{Y^0}$-sheaf, $\widetilde{E}^0$ has homological dimension 1, so
$\LL_1:=\ker e$ is a locally free $\OO_{Y^0} $-sheaf. Thus, there is an exact triple
\begin{equation}\label{triple 27}
	0\to\LL_1\to\LL_0\xrightarrow{e}\widetilde{E}^0\to0.
\end{equation}
From the definition \eqref{defn of Q0 etc} of the curve $C^0$ it follows, that
\begin{equation}\label{tilde E fiberwise}
	\widetilde{E}^0|_{\sigma^{-1}(x)}\cong\left\{
	\begin{array}{cc}
		\OO_{\p1}^{\oplus2}, \ \ \ &\ \ \ \ \ \ \ x\in C^0\smallsetminus G_C,\\
		\OO_{\p1}(1)\oplus\OO_{\p1}(-1), & x\in G_C,\\
		\mathbf{k}^2_{\sigma^{-1}(x)}, &\ \ \ \ \ \ \ x\in Q^0\smallsetminus C^0,
	\end{array}\right.
	\chi(\widetilde{E}^0|_{\sigma^{-1}(x)})=2.
\end{equation}
Therefore, applying \cite[Prop. 1.13]{ESt} to the projection $\sigma:X^0\to Q^0$, we obtain that
\begin{equation}\label{E0 loc free}
	\overline{E}^0|_{Q^0\smallsetminus G_C} \ \textit{is a locally free}\ {\OO_{Q^0\smallsetminus
			G_C}-}\textit{sheaf}.
\end{equation}
and
\begin{equation}\label{eqn 29}
	\widetilde{E}^0|_{X^0\smallsetminus \sigma^{-1}(G_C)}=\sigma^*(\overline{E}^0|_{Q^0
		\smallsetminus G_C}),\ \ \ \ \ \ \textrm{Supp}(R^1\sigma_*\widetilde{E}^0)\subset G_C.
\end{equation}
Since $x\in Q^0$, the fiber $p^{-1}(x)$ is isomorphic to $\p1$, and
\begin{equation}\label{L0 fiberwise}
	\LL_0|_{p^{-1}(x)}\cong\OO_{\p1}(-s)^{\oplus r},
\end{equation}
so $p_*\LL_0=0$, and taking into account the relative 
Serre duality for the locally trivial $\p1$-bundle 
$p:Y^0\to Q^0$ we obtain that
\begin{equation}\label{rk L0}
	\begin{split}
		& \MM_0=R^1p_*\LL_0 \ \textit{is a locally free sheaf of rank}\\
		& \rk\MM_0=h^1(\LL_0|_{p^{-1}(x)})=-\chi(\LL_0|_{p^{-1}(x)})=r( s-1), \ \ \ \ \
		\ \ \ \ x\in Q^0.
	\end{split}
\end{equation}
Restricting the exact triple \eqref{triple 27} to the fiber $p^{-1}(x)$, we obtain an exact triple
\begin{equation}\label{triple fiberwise}
	0\to\LL_1|_{p^{-1}(x)}\to\OO_{\p1}(-s)^{\oplus r}\to\widetilde{E}^0|_{\sigma ^{-1}(x)}
	\to0.
\end{equation}
From \eqref{tilde E fiberwise} and \eqref{triple fiberwise} it follows that
$\chi(\LL_1|_{p^{-1}(x)})=\chi(\LL_0|_{p^{-1}(x)})-\chi(\widetilde{E}^ 0|_{\sigma^{-1}
	(x)})=-r(s-1)-2$, and by analogy with \eqref{rk L0} we find that
\begin{equation}\label{rk L1}
	\begin{split}
		& \MM_1=R^1p_*\LL_1 \ \textit{is a locally free sheaf of rank}\\
		& \rk\MM_1=h^1(\LL_1|_{p^{-1}(x)})=-\chi(\LL_1|_{p^{-1}(x)})=r( s-1)+2, \ \ \ \
		\ \ \ \ \ x\in Q^0.
	\end{split}
\end{equation}
As a consequence, applying the functor $R^ip_*$ to \eqref{triple 27}, we obtain an exact
subsequence
\begin{equation}\label{exact 4ple}
	0\to\overline{E}^0\to\MM_1\to\MM_0\to\kappa\to0,\ \ \ \ \
	\kappa:=R^1\sigma_*\widetilde{E}^0,
\end{equation}
and in view of \eqref{eqn 29} we have the inclusion
\begin{equation}\label{Supp kappa}
	\textrm{Supp}(\kappa)\subset G_C.
\end{equation}
Since $\MM_0$ and $\MM_1$ are locally free sheaves on a smooth three-dimensional
variety $Q^0$, it follows (see, for example, \cite[Prop. 1.1]{H}) that
\begin{equation}\label{E0 refl}
	\overline{E}^0 \ \textit{is a reflexive}\ \OO_{Q^0}-\textit{sheaf}.
\end{equation}
On the other hand, again due to the local freeness of $\MM_0$ and $\MM_1$, applying to
\eqref{exact 4ple} the functor $\lExt_{\OO_{Q^0}}^1(-, \OO_{Q^0})$ and using the fact that
$\kappa$ is a sheaf of dimension $\le0$, we obtain an isomorphism of sheaves
\begin{equation}\label{Ext1=Ext3}
	\lExt_{\OO_{Q^0}}^1(\overline{E}^0,\OO_{Q^0})\cong\lExt_{\OO_{Q^0}}^3(
	\kappa,\OO_{Q^0}).
\end{equation}
Since by virtue of \eqref{Supp kappa} the sheaf $\kappa$ is either zero or an artinian sheaf, that is $\kappa$ has a finite filtration with factors - residue fields
$\mathbf{k}_x$ of points $x\in\textrm{Supp}(\kappa)$. Since $\lExt_{\OO_{Q^0}}^3
(\mathbf{k}_x,\OO_{Q^0})\cong\mathbf{k}_x,\ x\in\textrm{Supp}(\kappa),$ therefore
$\chi(\lExt_{\OO_{Q^0}}^3(\kappa,\OO_{Q^0}))=\chi(\kappa)$.
From this and from \eqref{Ext1=Ext3} it follows, that
\begin{equation}\label{Supp kappa=...}
	\textrm{Supp}(\lExt_{\OO_{Q^0}}^1(\overline{E}^0,\OO_{Q^0}))=\textrm{Supp}(\kappa),
\end{equation}
\begin{equation}\label{d=}
	d:=\chi(\lExt_{\OO_{Q^0}}^1(\overline{E}^0,\OO_{Q^0}))=\chi(\kappa).
\end{equation}

Since $\sigma:\widetilde{X}\smallsetminus S\xrightarrow{\cong}Q\smallsetminus C$ is an
isomorphism and $\textrm{Sing}\widetilde{E}\subset\widetilde{X}\smallsetminus S$, then,
setting
$$
\overline{E}:=\sigma_*\widetilde{E},
$$
we have
\begin{equation*}
	\sigma(\textrm{Sing}\widetilde{E})=\textrm{Sing}(\overline{E}|_{Q\smallsetminus C})
	\subset\textrm{Sing}\overline{E}.
\end{equation*}
We also obtain an isomorphism of artinian sheaves
\begin{equation}\label{Ext=Ext}
	\sigma_*:\ \lExt_{\OO_{\widetilde{X}}}^1(\widetilde{E},\OO_{\widetilde{X}})
	\xrightarrow{\cong}\lExt_{\OO_{Q}}^1(\overline{E},\OO_{Q})|_{Q\smallsetminus C},
\end{equation}
where
\begin{equation}\label{Supp=Sing}
	\textrm{Supp}(\lExt_{\OO_{Q}}^1(\overline{E},\OO_{Q})|_{Q\smallsetminus C})=
	\textrm{Sing}(\overline{E}|_{Q\smallsetminus C}).
\end{equation}
Moreover, setting $c_3:=c_3(E)$, we have
\begin{equation}\label{c3=...}
	c_3=c_3(\widetilde{E})=\chi(\lExt_{\OO_{\widetilde{X}}}^1(\widetilde{E},\OO_{
		\widetilde{X}}))=\chi(\lExt_{\OO_{Q}}^1(\overline{E},\OO_{Q})|_{Q\smallsetminus C}).
\end{equation}
Here in \eqref{c3=...} the first equality follows from \eqref{tE cap S}, the second equality
is proven in \cite[Prop. 2.6]{H}, and the third equality follows from \eqref{Ext=Ext}.

Put
\begin{equation}\label{Qxi etc}
	Q_{\xi}:=Q\smallsetminus D_{\xi}=Q^0\cup\Sing\overline{E},
	\ \ \ X_{\xi}:=\sigma^{-1}(Q_{\xi}),\ \ \ \ \ \
	\widetilde{E}_{\xi}:=\widetilde{E}|_{X_{\xi}}, \ \ \
	\overline{E}_{\xi}:=\sigma_*\widetilde{E}_{\xi}.
\end{equation}
By construction, $\overline{E}^0=\overline{E}_{\xi}|_{Q^0}$, therefore from
\eqref{Dxi cap empty}, \eqref{E0 loc free}, \eqref{E0 refl} and \eqref{Qxi etc} follows,
that $\overline{E}_{\xi}$ is a reflexive sheaf, locally free at the points of the curve
$C_{\xi}= C\smallsetminus D_{\xi}$, in particular, at the points of the divisor $D_{\xi'}$ on $C$:
\begin{equation}\label{Exi refl etc}
	\overline{E}_{\xi}\  \textit{is a reflexive sheaf, locally free at
		the points of}\ D_{\xi'}.
\end{equation}
Taking now $\xi'$ instead of $\xi$ and assuming by analogy with \eqref{Qxi etc}
\begin{equation}\label{Qxi' etc}
	Q_{\xi'}:=Q\smallsetminus D_{\xi'},
	\ X_{\xi'}:=\sigma^{-1}(Q_{\xi'}),\ \ \ \ \ \
	\widetilde{E}_{\xi'}:=\widetilde{E}|_{X_{\xi'}}, \ \ \
	\overline{E}_{\xi'}:=\sigma_*\widetilde{E}_{\xi'},
\end{equation}
we get similarly to \eqref{Exi refl etc}:
\begin{equation}\label{Exi' refl etc}
	\overline{E}_{\xi'}\  \textit{is a reflexive sheaf, locally free at the points of}\
	D_{\xi}.
\end{equation}
Consider the sheaf
$$
\overline{E}=\sigma_*\widetilde{E}.
$$
From \eqref{tilde E stable} we have:
\begin{equation}\label{bar E stable}
	H^0(\overline{E})=0.
\end{equation}
Note that due to \eqref{Qxi etc} and \eqref{Qxi' etc}
$E_{\xi}=\overline{E}|_{Q_{\xi}}$, $E_{\xi'}=\overline{E}|_{Q_{\xi'}}$. 
From this and from \eqref{Exi refl etc} and \eqref{Exi' refl etc}
we find that $\overline{E}$ is a reflexive sheaf. Moreover, for
any line $l'$ on $X$ that does not intersect a line $l$ we have
an isomorphism $\beta:=(\sigma\circ\delta^{-1}|_l):l'\xrightarrow{\sim}m:=\beta(l')$ 
such that $E|_{l'}\cong\overline{E}|_m$,
hence $c_1(\overline{E})=c_1(\overline{E}|_{m})=c_1(E|_{l'})=0$.
Thus, by virtue of \eqref{bar E stable} $E$ is stable. So,
\begin{equation}\label{bar E refl etc}
	\overline{E}\ \textit{is a stable reflexive sheaf with}\
	c_1(\overline{E})=0.
\end{equation}
Note also that from \eqref{Supp kappa}, \eqref{Qxi etc},
\eqref{Qxi' etc} and the fact that $\overline{E}^0=\overline{E}|_{Q^0}$, it follows that
$\kappa=R^1\sigma_*\widetilde{E}^0=R^1\sigma_*\widetilde{E}$, hence
\begin{equation}\label{Ext1=}
	\lExt_{\OO_{Q}}^1(\overline{E},\OO_{Q})|_{Q_0}=\lExt_{\OO_{Q^0}}^1(\overline{E}^0,
	\OO_{Q^0})
\end{equation}
and due to \eqref{Supp kappa=...} $\textrm{Supp}(\lExt_{\OO_{Q^0}}^1(\overline{E}^0,
\OO_{Q^0}))\subset G_C$. Since $G_C\cap(Q\smallsetminus C)=\varnothing$, then from
\eqref{Ext1=} the equality follows:
\begin{equation}\label{dir sum}
	\lExt_{\OO_{Q}}^1(\overline{E},\OO_{Q})=\lExt_{\OO_{Q^0}}^1(\overline{E}^0,\OO_{
		Q^0})\oplus\lExt_{\OO_{Q}}^1(\overline{E},\OO_{Q})|_{Q\smallsetminus C}.
\end{equation}
Note that due to \cite[Prop. 2.6]{H} and \eqref{bar E refl etc} we have
$\overline{c}_3:=c_3(\overline{E})=\chi(\lExt_{\OO_{Q}}^1(\overline{E},\OO_{Q}))$,
therefore \eqref{d=}, \eqref{c3=...}, \eqref{dir sum} and the equality $\kappa=R^1\sigma_*
\widetilde{E}$ give:
\begin{equation}\label{bar c3=}
	\overline{c}_3=d+c_3,\ \ \ \ \ \ d=\chi(R^1\sigma_*\widetilde{E}).
\end{equation}
Since, obviously, $R^i\sigma_*\widetilde{E}=0,\ i\ge2,$ then the Leray spectral sequence for the morphism $\sigma:
\widetilde{X}\to Q$ gives $\chi(\widetilde{E})= \chi( \overline{E})-\chi(R^1\sigma_*\widetilde{E})$. Hence, taking into account \eqref{bar c3=} and \eqref{chi=chi} we have:
\begin{equation}\label{chi=chi+}
	\chi(\overline{E}))=\chi(E)+\chi(R^1\sigma_*\widetilde{E})=\chi(E)+d.
\end{equation}
Denote
\begin{equation}
	c_2:=c_2(E),\ \ \ \ \ \overline{c}_2:=c_2(\overline{E}).
\end{equation}

Recall the general Riemann--Roch formula for an arbitrary coherent sheaf $\EE$ of rank
$r$ with Chern classes $\tilde{c}_1,\ \tilde{c}_2,\ \tilde{c}_3$ on a smooth projective
three-dimensional Fano variety $X$ with canonical class $\omega_X$:
\begin{equation}\label{RR}
	\chi(\EE)=r\chi(\OO_X)+\frac 16\tilde{c}_1^3+\frac 12(\tilde{c}_3-\tilde{c}_1\tilde{
		c}_2)-\frac 14\omega_X(\tilde{c}_1^2-2\tilde{c}_2)+\frac{\tilde{c}_1}{12}(c_2(\Omega
	_X)+\omega_X^2).
\end{equation}
Applying this formula to the sheaf $E$ on $X=X_5$ (see \eqref{E stable} and, respectively,
to the sheaf $\overline{E}$ on $X=Q$, we get:
\begin{equation}\label{chi E}
	\chi(E)=2+\frac{1}{2}c_3-c_2,
\end{equation}
\begin{equation}\label{chi bar E}
	\chi(\overline{E})=2+\frac{1}{2}\overline{c}_3-\frac{3}{2}\overline{c}_2.
\end{equation}
In addition, according to Theorem \ref{Theorem 3.1}.(4) we have
\begin{equation}\label{bar c3 le}
	\overline{c}_3\le\left\{
	\begin{array}{cc}
		\frac 12\overline{c}_2^2\ \ \ , &\text{if}\ c_2\ \text{is even},\\
		\frac 12(\overline{c}_2^2+1), & \ \ \ \text{if}\ c_2\ \text{is odd}.
	\end{array}\right.
\end{equation}
From \eqref{chi=chi+}, \eqref{chi E} and \eqref{chi bar E} we find: $\overline{c}_2=
\frac 23c_2-\frac 13d$. Substituting this relation and the equality $c_3=\overline{c}_3-d$, following from \eqref{bar c3=}, in \eqref{bar c3 le}, we obtain the inequality
\begin{equation}\label{bar c3 le}
c_3=\overline{c}_3-d\le\left\{
\begin{array}{cc}
\frac 12(\frac 23c_2-\frac 13d)^2-d, &\text{if}\ c_2\ \text{is even},\\
\frac 12((\frac 23c_2-\frac 13d)^2+1)-d, & \ \ \ \text{if}\ c_2\ \text{is odd}.
\end{array}\right.
\end{equation}
Since $\overline{c}_2=\frac 23c_2-\frac 13d\ge0$ and $d\ge0$, then, assuming in
\eqref{bar c3 le} $d=0$, we obtain the main result of this subsection --- the following theorem.

\begin{theorem}\label{Thm 6.1}
	Let $E$ be a rank 2 stable reflexive sheaf of general type with
	$c_1(E)=0$, $c_2=c_2(E)>0$, $c_3=c_3(E)$ on the variety $X=X_5$. Then
	the following inequalities are valid:
	\begin{equation}\label{c3 le}
		c_3\le\left\{
		\begin{array}{cc}
			\frac 29c_2^2,\ \ &\text{if}\ c_2\ \text{is even},\\
			\frac 29c_2^2+\frac 12, & \ \ \ \text{if}\ c_2\ \text{is odd}.
		\end{array}\right.
	\end{equation}
\end{theorem}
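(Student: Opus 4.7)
The plan is to transfer the problem from $X_5$ to the three-dimensional quadric $X_2$, where the desired bound on $c_3$ for semistable rank $2$ sheaves has already been established in Theorem~\ref{Theorem 3.1}(4). The bridge is the standard birational transformation $\varphi : X_5 \dashrightarrow Q \cong X_2$ of property~4) above, resolved by the Hironaka roof $X \xleftarrow{\delta} \widetilde{X} \xrightarrow{\sigma} Q$ in \eqref{eqn 21}, with $\delta$ blowing up a sufficiently general line $l \subset X_5$ (with $N_{l/X} \cong \OO_{\P^1}^{\oplus 2}$) and $\sigma$ blowing up a smooth rational cubic $C \subset Q$. First I would pull $E$ back to $\widetilde{E} := \delta^* E$ and push it forward to $\overline{E} := \sigma_* \widetilde{E}$ on $Q$. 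Generality of $l$, permitted by the general-type hypothesis $\dim B_2(X) \le 0$ together with $\dim p_1^{-1}(\Sing E) \le 0$, guarantees that the exceptional divisor $S = \delta^{-1}(l)$ is disjoint from $\Sing \widetilde{E}$, so \eqref{chi=chi} and \eqref{tilde E stable} hold and the splitting type of $\widetilde{E}$ along fibers of $\sigma$ is controlled as in \eqref{tilde E fiberwise}.

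Next I would verify that $\overline{E}$ is a rank $2$ stable reflexive sheaf on $Q$ with $c_1(\overline{E}) = 0$. Away from $C$, $\sigma$ is an isomorphism, so this is automatic. Near $C$, I would embed $\widetilde{X} \cong \P(\II_{C,Q})$ as a divisor in the smooth threefold $Y = \P(F)$ obtained by Serre's construction from a sufficiently generic extension $\xi \in H^0(\MM)$, resolve $\widetilde{E}^0$ by the locally free sheaf $\LL_0$ of \eqref{triple 27}, and push down along the $\P^1$-bundle $p : Y^0 \to Q^0$. Relative Serre duality then produces the four-term exact sequence \eqref{exact 4ple} $0 \to \overline{E}^0 \to \MM_1 \to \MM_0 \to \kappa \to 0$ with $\MM_0, \MM_1$ locally free and $\mathrm{Supp}\,\kappa \subset G_C$ finite, from which reflexivity of $\overline{E}^0$ follows by the standard criterion for rank $2$ sheaves on a smooth threefold. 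Covering $Q$ by the two opens attached to disjoint sections $\xi, \xi'$ upgrades this to global reflexivity of $\overline{E}$; stability with $c_1(\overline{E}) = 0$ is read off by restricting to a line $l' \subset X$ disjoint from $l$, along which $\overline{E}$ and $E$ agree, combined with $H^0(\overline{E}) = 0$.

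Finally I would read off the inequality by a Riemann--Roch comparison. Setting $d := \chi(\kappa)$, the Leray spectral sequence gives $\chi(\overline{E}) = \chi(E) + d$, while the $\lExt$ identification in \eqref{Ext1=Ext3} and the direct sum decomposition \eqref{dir sum} yield $\overline{c}_3 = c_3 + d$. Applying \eqref{RR} on $X_5$ and on $Q$ and eliminating the Euler characteristics gives $\overline{c}_2 = \tfrac{2}{3} c_2 - \tfrac{1}{3} d$. Substituting into Theorem~\ref{Theorem 3.1}(4) produces $c_3 \le \tfrac{1}{2}\bigl(\tfrac{2}{3} c_2 - \tfrac{1}{3} d\bigr)^2 - d$ (with an additional $+\tfrac{1}{2}$ when $\overline{c}_2$ is odd). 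Since $d \ge 0$ and the constraint $\overline{c}_2 \ge 0$ forces $d \le 2 c_2$, a direct check shows the right-hand side is bounded above by $\tfrac{2}{9} c_2^2$ throughout this range, with the maximum attained at $d = 0$; the claim \eqref{c3 le} follows.

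The principal obstacle will be the middle step: ensuring that passage through the exceptional locus of $\sigma$ neither destroys reflexivity of $\overline{E}$ nor produces spurious torsion along $C$ that would inflate $\overline{c}_3$ beyond the controlled quantity $c_3 + \chi(\kappa)$. The general-type hypothesis is exactly what is needed to force $\widetilde{E}$ to have generic splitting type on fibers of $\sigma$ over all but finitely many points of $C$, thereby confining $\mathrm{Supp}\,\kappa$ to $G_C$; the technical heart of the argument is then the identification \eqref{Ext1=Ext3}, which converts the sheaf-theoretic failure of $\overline{E}$ to be locally free along $C$ into a clean numerical correction $d \ge 0$.
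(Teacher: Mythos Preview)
Your proposal is correct and follows essentially the same approach as the paper: you transport $E$ across the Hironaka roof \eqref{eqn 21} to a stable reflexive sheaf $\overline{E}$ on the quadric, control the correction term $d=\chi(\kappa)\ge0$ via the four-term sequence \eqref{exact 4ple} and the identification \eqref{Ext1=Ext3}, extract $\overline{c}_2=\tfrac{2}{3}c_2-\tfrac{1}{3}d$ from the Riemann--Roch comparison, and feed this into Theorem~\ref{Theorem 3.1}(4). Your observation that the right-hand side of \eqref{bar c3 le} is decreasing in $d$ on $[0,2c_2]$ (since its derivative equals $-\tfrac{1}{3}\overline{c}_2-1<0$) is exactly the final step the paper uses to reduce to $d=0$.
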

\subsection{Rank two stable reflexive sheaves of general type on $X_4$}
In this subsection everywhere $X=X_4$.

Let us list some known facts about the variety $X$ (see, for example,
\cite[Theor. 4.2(iii)]{Isk}).\\
1) The base $B=B(X)$ of the family of lines on $X$ is isomorphic to the Jacobian $J(C)$ of a smooth curve $C$
of genus 2.\\
2) For an arbitrary line $l\in B$ the set $B_l=\{l'\in B\ |\ l'\cap
l\ne\varnothing\}$ is a curve in $B$ isomorphic to the curve $C$.\\
3) For an arbitrary line $l\in B$ or $N_{l/X}\cong\OO_{\p1}^{\oplus2}$ (general
case), or $N_{l/X}\cong\OO_{\p1}(1)\oplus\OO_{\p1}(-1)$.\\
4) Let $\varphi:\ X\dasharrow \p4$ be the linear projection of the variety $X\subset\p5$
to the space $\p3$ from a line $l\subset X$ for which $N_{l/X}\cong\OO_{\p1}
^{\oplus2}$. Then $\varphi(X)=\p3$, the morphism $\varphi: X\dasharrow\p3$ is birational and
decomposes into a Hironaka's roof
\begin{equation}\label{eqn 67}
\xymatrix{
& \ar[dl]_{\delta} \ar[dr]^{\sigma}\widetilde{X} & \\
X \ar@{-->}[rr]^-{\varphi} & & Q.}
\end{equation}
Here $\delta^{-1}:\ X\dasharrow\widetilde{X}$ is the 
blow-up of $X$ centered at $l$, $S=\delta^{-1}(l)\simeq 
\p1 \times\p1$, $\sigma|_S:S\xrightarrow{\simeq}\sigma(S) 
=Q_2$ is an isomorphism of $S$ onto a smooth 
two-dimensional quadric $Q_2\subset\p3$, and $\sigma^{-1} :\ Q\dasharrow\widetilde{X}$ is the blow-up of $Q$ centered in a smooth curve $C$ of type (2,3) on $Q_2$, so that $\widetilde{X}\simeq\p(\II_{C,\p3})$, which is isomorphic to the curve $C$ from the statement 1) above. In particular, the triple $0\to N_{C,Q_2}\to N_{C/\p3}\to N_{Q_2/\p3}|_{C}\to0$ is exact,
where, due to the isomorphism $Q_2\simeq\p1\times\p1$, we obtain $N_{C/Q_2}\cong \OO_C(D_I)$,
$\deg D_I=12$, $N_{Q_2/\p3}|_{C}\cong\OO_{\p3}(2H)|_{C}\cong\OO_C(D_{II})$, $ \deg
D_{II}=10$, hence $\det N_{C/\p3}\cong\OO_C(D_I+D_{II})$. Note that the divisor
$D_I+D_{II}$ on $C$ is very ample, since $C$ is a curve of genus 2.\\
5) Let $\Gamma=\{(x,l)\in X\times B\ |\ x\in l\}$ be an incidence graph with projections
$X\xleftarrow{p_1}\Gamma\xrightarrow{p_2}B$. It is known \cite[Prop. 5.2]{Isk} that
$p_1:\Gamma\to X$ is a finite at a general point morphism and $\dim\{x\in X\ |\ \dim p_1^{-1}(x)\ge1\}\le0$.

Consider an extension
\begin{equation}\label{xi on C}
	\xi:\ 0\to\OO_{\p3}(nH)\to F\xrightarrow{\varepsilon}\II_{C,\p3}\to0.
\end{equation}
The sheaf $\lHom(\II_{C,\p3},\OO_{\p3}(nH))\cong\OO_{\p3}(nH)$ on $\p3$ and the sheaf $\MM=\lExt^1(\II_{C,\p3}, \\ \OO_{\p3}(nH))\cong\lExt ^2(\OO_C,\OO_{\p3}(nH))\cong\det N_{C/ \p3}
\otimes\OO_{\p3}(nH)\cong\OO_C(D_I+D_{II}+nH)$ on $C$ with $n\gg0$ are very ample.
Therefore, we can assume that:\\
1) $H^i(\lHom(\II_{C,\p3},\OO_{\p3}(nH)))=0,\ i=1,2$, which means that the long exact
sequence of $\Ext$-groups for the pair of sheaves $\II_{C,\p3},\OO_{\p3}(nH)$ gives
isomorphisms $\Ext^1(\lHom(\II_{C,\p3},\OO_{\p3}(nH))\cong H^0(\lExt^1(\II_{C,\p3},
\OO_{\p3}(nH))\cong H^0(\lExt^2(\OO_C,\OO_{\p3}(nH))=H^0(\MM)$;\\
2) the triple \eqref{xi on C} as an extension $\xi\in\Ext^1(\lHom(\II_{C,\p3},\OO_Q(nH))=H
^0(\MM)$ is a section of the sheaf $\MM$, the scheme of zeros $(\xi)_0$ of which is reduced, i.e.
it is a simple divisor $D_{\xi}=x_1+...+x_{5n+22}$. This means by
Serre's construction (see \cite[Section 4]{H}) that in \eqref{xi on C} the sheaf $F$ is
a reflexive sheaf of rank two with $\Sing F=D_{\xi}$ and has the simplest singularities in
the points $x_i$, i.e. $\lExt^1(F,\OO_Q)\cong\oplus_{i=1}^{3n+6}\mathbf{k}_{x_i}$. Therefore, according to \cite[Cor. 2.8]{BW}, the projective spectrum $Y=\P(F)$ of the sheaf $F$ is
a smooth variety. The epimorphism $\varepsilon$ in \eqref{xi on C} corresponds to
an embedding $i$ of the divisor $\widetilde{X}=\P(\II_{C,\p3})$ into $Y$, and let $p:Y\to\p3$ be the
natural projection, so $\sigma=p\circ i$. Note also that the sheaf
\begin{equation}\label{sheaf L}
	L=\OO_{Y/\p3}(1)\otimes p^*\OO(mH)
\end{equation}
is ample for $m\gg0$, where $\OO_{Y/\p3}(1)$ is the Grothendieck sheaf on $Y=\P(F)$.

Let
\begin{equation*}\label{E stable on X4}
	\begin{split}
		& E\ \textit{be a rank 2 stable reflexive sheaf of general type with}\ c_1(E)=0\\
		& \textit{on}\ X_4;\ \textit{in particular}\ H^0(E)=0.
	\end{split}
\end{equation*}

Repeating for the sheaf $E$ verbatim all the arguments from
\eqref{def tilde E} -- \eqref{bar c3 le} with $Q$ and $Q^0$ replaced by $\p3$
and $(\p3)^0$ respectively, and using Hartshorne’s result for
stable reflexive sheaves on $\p3$ (see \cite[Thm. 8.2(b)]{H}),
we obtain the following analogue of theorem \ref{Thm 6.1} for $X=X_4$.
\begin{theorem}\label{Thm 6.4}
	Let $E$ be a rank 2 stable reflexive sheaf of general type with
	$c_1(E)=0$, $c_2=c_2(E)>0$, $c_3=c_3(E)$ on the variety $X=X_4$. Then
	the following inequality is true:
	\begin{equation}\label{c3 le for X4}
		c_3\le c_2^2-c_2+2.
	\end{equation}
	
\end{theorem}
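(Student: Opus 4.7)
The plan is to mimic the proof of Theorem \ref{Thm 6.1} step by step, replacing the birational model $X_5 \dashrightarrow Q$ with the analogous birational model $X_4 \dashrightarrow \P^3$ described in properties 1)--5) at the start of this subsection. First I would pick a general line $l \in B(X_4)$ with $N_{l/X} \cong \OO_{\P^1}^{\oplus 2}$ such that the ruled surface $S_l = p_1(p_2^{-1}(B_l))$ (here $B_l$ is a curve isomorphic to $C$, not a line as in the $X_5$ case) avoids $\mathrm{Sing}(E)$; this is possible because $\dim \mathrm{Sing}(E) \le 0$, $p_1$ is generically finite, and the family of such lines sweeps $X$. Form $\widetilde{E} = \delta^* E$ on $\widetilde{X}$ and push forward by the blowdown $\sigma \colon \widetilde{X} \to \P^3$ to get $\overline{E} := \sigma_* \widetilde{E}$.

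Next I would check the structural properties of $\overline{E}$. Using the general type hypothesis, the set $G_C = \{x \in C \mid \widetilde{E}|_{\sigma^{-1}(x)} \not\cong \OO_{\P^1}^{\oplus 2}\}$ is zero-dimensional. Repeating the local resolution argument \eqref{triple 27}--\eqref{E0 refl} word-for-word (with $Q$ replaced by $\P^3$, and the curve of type $(2,1)$ on $Q_2 \subset Q$ replaced by the curve of type $(2,3)$ on $Q_2 \subset \P^3$) shows that $\overline{E}$ is a reflexive $\OO_{\P^3}$-sheaf. Stability with $c_1(\overline{E}) = 0$ follows, as in \eqref{bar E refl etc}, by observing that for a general line $l' \subset X$ disjoint from $l$, the restriction $E|_{l'}$ is identified via $\sigma \circ \delta^{-1}$ with $\overline{E}|_{m}$ for a line $m \subset \P^3$, so $c_1(\overline{E}) = c_1(E) = 0$ and $H^0(\overline{E}) = H^0(E) = 0$.

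Now I would apply Hartshorne's bound \cite[Thm. 8.2(b)]{H} on $\P^3$ for stable reflexive rank $2$ sheaves with $c_1 = 0$:
\begin{equation*}
\overline{c}_3 \le \overline{c}_2^2 - \overline{c}_2 + 2.
\end{equation*}
To convert this into a bound for $c_3$, I would run the Riemann--Roch/Leray comparison exactly as in \eqref{bar c3=}--\eqref{chi bar E}, obtaining relations of the form $c_3 = \overline{c}_3 - d$ and $\overline{c}_2 = \alpha c_2 - \beta d$ for explicit rational constants $\alpha, \beta > 0$ determined by the Todd class and the intersection numbers of $X_4$ and $\P^3$, where $d = \chi(R^1 \sigma_* \widetilde{E}) \ge 0$. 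Substituting into the Hartshorne bound gives
\begin{equation*}
c_3 \le (\alpha c_2 - \beta d)^2 - (\alpha c_2 - \beta d) + 2 - d,
\end{equation*}
and one checks that the right-hand side, as a function of $d \in [0, \alpha c_2/\beta]$, is strictly decreasing (the derivative is $-2\beta(\alpha c_2 - \beta d) + \beta - 1 < 0$ for $\overline{c}_2 \ge 1$), so it is maximized at $d = 0$. Setting $d = 0$ yields the desired inequality $c_3 \le c_2^2 - c_2 + 2$, provided $\alpha = 1$.

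The main obstacle is the bookkeeping in Step 3: one must compute the exact coefficients $\alpha, \beta$ from the Riemann--Roch formula \eqref{RR} applied to $E$ on $X_4$ (with $H^3 = 4$, $\omega_{X_4} = -2H$, and the appropriate $c_2(\Omega_{X_4})$) and to $\overline{E}$ on $\P^3$, and verify that they conspire to give precisely $\alpha = 1$, so that the Hartshorne bound transfers without loss. Unlike the $X_5$ case, where the index is $2$ and $H^3 = 5$ produced $\alpha = 2/3$, here the degree $H^3 = 4$ should yield $\alpha = 1$, matching the $\P^3$ bound exactly. A secondary subtlety is to confirm that the general-type hypothesis on $E$ is preserved (in the form $\dim G_C \le 0$) under the choice of general $l$, which requires a simple dimension count for the incidence variety of lines on $X_4$ meeting $l$.
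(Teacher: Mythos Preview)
Your approach is essentially identical to the paper's, which simply says ``repeat the $X_5$ argument verbatim with $Q$ replaced by $\P^3$ and invoke Hartshorne's bound \cite[Thm.~8.2(b)]{H}.'' The structure you outline---choose a general line, form $\widetilde{E}=\delta^*E$, push forward to $\overline{E}$ on $\P^3$, verify reflexivity and stability via the locally free resolution \eqref{triple 27}--\eqref{E0 refl}, then compare Euler characteristics---is exactly right.

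There is, however, a bookkeeping slip in your final paragraph. The Riemann--Roch formula \eqref{RR} with $c_1=0$ gives
\[
\chi(E)=2+\tfrac12 c_3 - c_2 \quad\text{on } X_4,\qquad
\chi(\overline{E})=2+\tfrac12\overline{c}_3 - 2\overline{c}_2 \quad\text{on }\P^3,
\]
so $\chi(\overline{E})=\chi(E)+d$ together with $\overline{c}_3=c_3+d$ yields
$\overline{c}_2=\tfrac12 c_2-\tfrac14 d$,
i.e.\ $\alpha=\tfrac12$, not $\alpha=1$. Your heuristic that ``$H^3=4$ should give $\alpha=1$'' is off.

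Fortunately this does not damage the argument; if anything it improves it. With $\alpha=\tfrac12$, $\beta=\tfrac14$, your monotonicity check gives $f'(d)=-\tfrac12\overline{c}_2-\tfrac34<0$ for all $d$ with $\overline{c}_2\ge0$, so the maximum is at $d=0$, yielding
\[
c_3\le \tfrac14 c_2^2-\tfrac12 c_2+2.
\]
For $c_2\ge1$ this is \emph{stronger} than the stated bound $c_3\le c_2^2-c_2+2$, since $(1+\alpha)c_2\ge1$. So your worry that ``the argument requires $\alpha=1$'' is misplaced: any $\alpha\in(0,1]$ suffices for the theorem as stated, which (as the paper itself remarks) is not claimed to be sharp.
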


\vspace{2cm}

\vspace{5mm}

\noindent
Alexander~S.~Tikhomirov\\
Faculty of Mathematics\\
National Research University\\
Higher School of Economics\\
Usacheva St., 6\\ 
119048 Moscow, Russia\\
\textit{E-mail}:{\ astikhomirov@mail.ru}

\vspace{3mm}

\noindent
Danil A.~Vassiliev\\
Faculty of Mathematics\\
National Research University\\
Higher School of Economics\\
Usacheva St., 6\\ 
119048 Moscow, Russia\\
\textit{E-mail}:{\ danneks@yandex.ru}

\end{document}